\documentclass[aos]{imsart}

\RequirePackage{amsthm,amsmath,amsfonts,amssymb,amscd,mathrsfs}
\RequirePackage[numbers]{natbib}
\RequirePackage[colorlinks,citecolor=blue,urlcolor=blue]{hyperref}
\RequirePackage{graphicx}

\startlocaldefs

\newtheorem{theorem}{Theorem}[section]
\newtheorem{Proposition}[theorem]{Proposition}
\newtheorem{Lemma}[theorem]{Lemma}
\theoremstyle{remark}
\newtheorem{Definition}[theorem]{Definition}
\newtheorem{Condition}[theorem]{Condition}
\newtheorem{Remark}[theorem]{Remark}


\def \R {{\mathbb R}}
\def \C {{\mathbb C}}
\def \Cm {{\mathbb C}}
\def \II{{\mathcal I}}
\newcommand{\Nm}{\mathbb{N}}
\newcommand{\mD}{\mathcal{D}}
\newcommand{\mK}{\mathcal{K}}
\newcommand{\mL}{\mathcal{L}}
\newcommand{\id}{\mathrm{Id}}
\newcommand{\norm}[1]{\lVert #1 \rVert}
\def\t{\tilde{\tau}}
\newcommand{\wtH}{\widetilde{H}}
\newcommand{\ran}{\text{ran}}


\endlocaldefs

\begin{document}

\begin{frontmatter}
\title{Statistical guarantees for Bayesian uncertainty quantification in non-linear  inverse problems \\ with Gaussian process priors}
\runtitle{UQ for Nonlinear Inverse Problems}

\begin{aug}
\author[A]{\fnms{Fran\c{c}ois} \snm{Monard}\ead[label=e1]{fmonard@ucsc.edu}},
\author[B]{\fnms{Richard} \snm{Nickl}\ead[label=e2,mark]{nickl@maths.cam.ac.uk}}
\and
\author[B]{\fnms{Gabriel P.} \snm{Paternain}\ead[label=e3,mark]{g.p.paternain@dpmms.cam.ac.uk}}
\address[A]{Department of Mathematics,
University of California Santa Cruz,
\printead{e1}}

\address[B]{Department of Pure Mathematics and Mathematical Statistics,
University of Cambridge,
\printead{e2,e3}}
\end{aug}

\begin{abstract}
Bayesian inference and uncertainty quantification in a general class of non-linear inverse regression models is considered. Analytic conditions on the regression model $\{\mathscr G(\theta): \theta \in \Theta\}$ and on Gaussian process priors for $\theta$ are provided such that semi-parametrically efficient inference is possible for a large class of linear functionals of $\theta$. A general Bernstein-von Mises theorem is proved that shows that the (non-Gaussian) posterior distributions are approximated by certain Gaussian measures centred at the posterior mean. As a consequence posterior-based credible sets are valid and optimal from a frequentist point of view. The  theory is illustrated with two applications with PDEs that arise in non-linear tomography problems: an elliptic inverse problem for a Schr\"odinger equation, and inversion of non-Abelian $X$-ray transforms. New analytical techniques are deployed to show that the relevant Fisher information operators are invertible between suitable function spaces
\end{abstract}

\begin{keyword}[class=MSC2020]
\kwd[Primary ]{62F15}, 
{65N21}
\end{keyword}

\begin{keyword}
\kwd{$X$-ray transforms, Schr\"odinger equation, credible sets, Bernstein-von Mises theorems}
\end{keyword}

\end{frontmatter}

\section{Introduction}

We are concerned here with a general class of non-linear inverse regression problems that arise with partial differential equations (PDEs). They involve a functional parameter $\theta$ one wishes to make inference on, a non-linear `forward map' $\theta \mapsto \mathscr G(\theta)$ describing a set of regression functions $\{\mathscr G(\theta): \theta \in \Theta\}$ defined on some domain $\mathcal X$, and statistical measurements 
\begin{equation}\label{discrete}
Y_i = \mathscr G(\theta)(X_i) + \sigma \varepsilon_i, \qquad i=1, \dots, N.
\end{equation}
Here the $(X_i)_{i=1}^N$ represent a finite `uniform' discretisation of $\mathcal X$ and the $\varepsilon_i$ are independent standard Gaussian variables scaled by a fixed noise level $\sigma>0$. 

The aim is to construct a statistically and computationally efficient algorithm that recovers $\theta$ from such data $(Y_i, X_i)_{i=1}^N$.  In applications, often more is required and one is further interested in \textit{data-driven performance guarantees for the output of the algorithm}. This task forms part of the evolving scientific paradigm of `uncertainty quantification' \cite{UQ}. In statistical terminology one is concerned with the \textit{construction of a confidence set} for aspects of the possibly infinite-dimensional parameter $\theta$. In common language this just expresses the desire to find valid `error bars' for  the output of the algorithm one has used. 

Various methods aiming to `quantify inferential uncertainty' for inverse problems involving PDEs are now available, particularly based on Bayesian posterior distributions arising from Gaussian process (and other) priors for $\theta$, as advocated in influential work by A. Stuart \cite{S10, DS16}. While these measures of uncertainty can be computed by MCMC methods (see \cite{KKSV00, KS04, CRSW13, RHL14, CMPS16, BGLFS17} and below), few statistical guarantees are available for the validity of such posterior inferences in typical PDE settings where $\mathscr G$ is non-linear and $\theta$ is modelled as a Gaussian process. The present paper attempts to shed some light on this issue.

The general results we obtain will be shown to apply to two prototypical `model problems' which are concerned with non-linear maps $f \mapsto u_f$ arising with solutions $u=u_f$ of a differential equation of the form 
\begin{equation}\label{toypde}
\mathscr D u - f u =0 \quad \text{ on } \quad  M,
\end{equation}
where $\mathscr D$ is a \textit{given} differential operator and $f$ an unknown potential defined on some domain $M$ in $\mathbb R^d$. The aim is to recover $f$ from certain measurements of $u_f$.

In our first example one takes for $\mathscr D$ an elliptic second order differential operator, in fact to simplify the exposition we only consider $\mathscr D = \Delta$ equal to the standard Laplacian. One then parameterises $f$ via a link function mapping a linear space $\Theta$ (to which Gaussian process priors can be assigned) into positive potentials $f=f_\theta>0$, and collects noisy measurements (\ref{discrete}) with $\mathcal X=M$ of the solution $\mathscr G(\theta)=u_{f_\theta}$ of the corresponding (time-independent) \textit{Schr\"odinger equation} (\ref{toypde}) with prescribed boundary values. Various  nonlinear inverse problems are of this form or can be reduced to one involving a Schr\"odinger-type equation \cite{KKL01}. To reduce the mathematical complexity of this first example, we assume that measurements \textit{throughout all of} $M$ are available, as is relevant, e.g., in \textit{photo-acoustic tomography} \cite{BU10, BR11}. 

In our second example we consider a more challenging problem where only boundary measurements (`scattering data') of the solution $u$ of (\ref{toypde}) are given. Here the differential operator $\mathscr D$ arises from the \textit{geodesic vector field} on the $2$-dimensional unit disk $M$ and one observes \textit{non-Abelian $X$-ray transforms} corresponding to the `influx' boundary values at $\mathcal X =\partial_+ SM$ of matrix-valued solutions $u_\theta$ of (\ref{toypde}) with $f=\theta$ a skew-symmetric matrix field. This non-linear geometric inverse problem appears in physical imaging problems such as \textit{neutron spin tomography}, see \cite{Hetal, Sales17} and has been studied in \cite{E, No, PSUGAFA, MNP19}. Mathematically the setting is fundamentally different from the Schr\"odinger case as the underlying PDE methods are not elliptic but of transport type. An important contribution of this article is to solve the analytical problem of inverting the Fisher information operator arising in this setting (see below for more details).

We will give rigorous frequentist ($N \to \infty$) guarantees for Bayesian uncertainty quantification methodology arising from sufficiently smooth Gaussian process priors for $\theta$ in such inverse problems. Specifically, conditions will be provided under which optimal asymptotic semi-parametric inference is possible for linear functionals $\langle \theta, \psi \rangle$ for smooth $\psi \in C^\infty,$ from data in (\ref{discrete}), and we verify these conditions for the preceding examples with the Schr\"odinger equation and non-Abelian $X$-ray transforms. As a consequence Bayesian credible sets for such parameters are shown to be valid frequentist confidence sets, providing objective large sample guarantees for uncertainty quantification. We numerically validate these theoretical findings for reasonable sample sizes $(N=600, 1000)$ in Section \ref{numbojumbo}.

 The idea behind our results is based on obtaining asymptotically exact Bernstein-von Mises type Gaussian approximations for the local fluctuations of the \textit{non-Gaussian} posterior measure near $\theta_0$. In traditional regular statistical models such approximations have a long history going back to Laplace \cite{L1812}, von Mises \cite{vM31}, Le Cam \cite{LC86} and van der Vaart \cite{vdV98}. In more complex settings with infinite-dimensional parameter spaces and inverse problems, such results are more recent and the present article contributes to the programme developed in \cite{CN13, CN14, CR15, R17, N1, MNP, NS19, GK19, NR20, CR19, CvdP20}.

Next to some standard regularity assumptions on $\mathscr G$, our results involve \textit{two key hypotheses} which are specific to a given inverse problem. The first condition we require is that posterior inference is \textit{globally consistent}, that is, that the posterior measure concentrates on a shrinking $\|\cdot\|_\infty$ neighborhood of the ground truth $\theta_0$ generating the data. Proving such results typically requires `global' stability estimates for the inverse problem and the techniques involved are thus quite different from the `local' techniques of the present paper. Consistency results of this kind were recently obtained in relevant PDE settings in \cite{MNP19, AN19, GN19} building on ideas from Bayesian nonparametric statistics \cite{vdVvZ08}. As we are dealing with difficult non-linear ill-posed inverse problems, the contraction rates obtained in our concrete model examples are comparably slow in `low regularity settings'.  Thus, in order to control the discretisation error and semi-parametric `bias' terms in our proofs, we will have to assume that the prior Gaussian process model employed is sufficiently regular (in a Sobolev sense).

The second key condition concerns the inverse of the so-called (`Fisher'-) information operator of the inverse problem. If we denote by $\mathbb I_{\theta_0} $ the linear operator obtained from linearising the non-linear map $\mathscr G$ near the ground truth parameter $\theta_0$ (one may think of it as a derivative $(\partial \mathscr G/\partial \theta)_{|\theta = \theta_0}$ in a suitable sense), then general statistical theory (reviewed in Section \ref{optimality} below) suggests that a canonical asymptotic approximation to the posterior measure for $\theta$ should arise from a Gaussian measure with covariance operator $\mathbb I_{\theta_0} (\mathbb I_{\theta_0}^* \mathbb I_{\theta_0})^{-1}$ where $\mathbb I^*_{\theta_0}$ is an appropriate adjoint of $\mathbb I_{\theta_0}.$ Moreover this operator provides a benchmark for the optimum any uncertainty quantification algorithm can achieve. What precedes can be made rigorous, however, only if the \textit{information (or normal) operator} $\mathbb I_{\theta_0}^* \mathbb I_{\theta_0}$ is surjective onto a large enough range, and if the mapping properties of its inverse allow for the composition of $\mathbb I_{\theta_0}$ with $(\mathbb I_{\theta_0}^* \mathbb I_{\theta_0})^{-1}$. In the settings above this is not at all clear a-priori and in fact generates new PDE questions in its own right. For the Schr\"odinger equation problem it was shown in \cite{N1} using elliptic theory that $\mathbb I_{\theta_0}^* \mathbb I_{\theta_0}$ indeed is invertible (in fact, its inverse equals a certain type of iterated Schr\"odinger operator). We extend here the results in \cite{N1} to allow for Gaussian priors and a more general discrete measurement setting (under suitable hypotheses). For the non-Abelian $X$-ray case, inversion of $\mathbb I_{\theta_0}^* \mathbb I_{\theta_0}$ is a more delicate problem
that we successfully solve in this paper using recent techniques from \cite{M}. We refer to Remark \ref{remark:themuaffair} for some context and perspectives on this result. At this point it suffices to point out that 
the statistical questions explored here and in \cite{MNP, MNP19} are drivers of new developments in geometric inverse problems.

This paper is organised as follows: The main results for the PDE models arising from (\ref{toypde}) are given in Section \ref{mainpde}, whereas the general theory for Bayesian inference in non-linear random design regression models is developed in Section \ref{general}. All proofs are given in subsequent sections, and the results on the information geometry of non-Abelian $X$-ray transforms are presented in Section \ref{sec:mainXray}.  Throughout, for $\mathcal X$ a suitable open subset of Euclidean space, we use standard notation for H\"older spaces $C^\beta(\mathcal X)$ of $[\beta]$-times ($[\cdot]$ denotes integer part) continuously differentiable functions whose partial derivatives of order $[\beta]$ satisfy a $\beta-[\beta]$-H\"older continuity condition on $\mathcal X$. We define the usual Sobolev spaces $H^\alpha(\mathcal X)$ of functions with $L^2(dx)$-derivatives up to order $\alpha$, defined for $\alpha \notin \mathbb N$ by interpolation. Finally, for $V$ a normed vector space, $C^\infty(\mathcal X, V)$ denotes all smooth $V$-valued functions defined on $\mathcal X$, and $C^\infty_c(\mathcal X, V)$ denotes the subspace of $C^\infty(\mathcal X, V)$ consisting of functions that are compactly supported in the interior of $\mathcal X$. In Section \ref{sec:mainXray} these definitions will also be used when $\mathcal X=M$ is a Riemannian manifold $M$ with boundary.

\section{Main results for PDE models}\label{mainpde}

\subsection{General observation setting, prior and posterior}

Let $(\mathcal X, \mathcal A)$ and $(\mathcal Z, \mathcal B)$ be measurable spaces equipped with measures $\lambda, \zeta$, respectively. We assume that $\lambda$ is a probability measure and that $\zeta$ a finite measure. Let further $V,W$ be finite-dimensional vector spaces of fixed finite dimensions $p_V, p_W \in \mathbb N$, with inner products $\langle \cdot, \cdot \rangle_W, \langle \cdot, \cdot \rangle_V$, respectively. Let
$$ L^\infty(\mathcal X), \quad L^2(\mathcal X) = L^2_\lambda(\mathcal X,V) \quad \text{ and } \quad L^\infty(\mathcal Z), \quad L^2(\mathcal Z) = L^2_\zeta(\mathcal Z, W)$$ denote the bounded measurable, and $\lambda$- or $\zeta$- square integrable, $V$ or $W$-valued functions defined on $\mathcal X, \mathcal Z$, respectively. Denote by $\|\cdot\|_{L_\zeta^2(\mathcal Z)}, \|\cdot\|_{L_\lambda^2(\mathcal X)}$ the usual $L^2$-norms on these spaces, and by  $\langle \cdot, \cdot \rangle_{L^2_\zeta(\mathcal Z)}, \langle \cdot, \cdot \rangle_{L_\lambda^2(\mathcal X)}$ the corresponding Hilbert space inner products; and write $\|\cdot\|_\infty$ for the supremum norm.

We will consider parameter spaces $\Theta$ that are (Borel-measurable) \textit{linear} subspaces of $L^\infty(\mathcal Z, W)$, on which  measurable `forward maps' 
\begin{equation}\label{fwdG}
\theta \mapsto \mathscr G(\theta), \qquad \mathscr G : \Theta \to L^2_\lambda(\mathcal X, V),
\end{equation}
are defined. Observations then arise in a general random design regression setup where one is given jointly i.i.d.~random variables  $(Y_i, X_i)_{i=1}^N$ of the form
\begin{equation} \label{model}
Y_i = \mathscr G(\theta)(X_i) + \varepsilon_i, \quad \varepsilon_i \sim^{i.i.d} N(0, \sigma^2 I_V), \quad \sigma>0, \quad i=1, \dots, N,
\end{equation}
where the $X_i$'s are random i.i.d.~covariates drawn from law $\lambda$ on $\mathcal X$. We assume that the covariance $I_V$ of each noise vector $\varepsilon_i \in V$ is diagonal for the inner product of $V$. Correlated Gaussian noise can be accommodated simply by adjusting the choice of inner product on $V$. Conditions on the `experiments' underlying our regression model enter our results only through the probability measure $\lambda$ generating the $X_i$'s. In common cases where $\lambda$ represents a uniform distribution on some bounded domain in Euclidean space, a deterministic design regression model with `equally spaced' design $X_i=x_i$ can be seen to be statistically equivalent to (\ref{model}), see \cite{R08}, and our analysis thus also informs such measurement setups. We opt to present the theory here in a \textit{random} design model as it allows for a unified probabilistic treatment of the numerical discretisation error in the proofs.

If the natural domain on which $\mathscr G$ is defined is not a linear space, one can employ `link functions'  that map $\Theta$ into the relevant domain. The new forward map then consists of the composition of that link function with the initial forward map. See Section \ref{schrott} below for an example. We insist that $\Theta$ be a linear space so that Gaussian process priors can be assigned to it.

To fix notation: The joint law of the random variables $(Y_{i}, X_i)_{i=1}^N$ in (\ref{model}) defines a product probability measure on $(V \times \mathcal X)^N$, and it will be denoted by $P_\theta^N = \otimes_{i=1}^N P_\theta^i$, where we note $P_\theta^i=P_\theta^1$ for all $i$. The infinite product probability measure $\otimes_{i=1}^\infty P_\theta^i$ describing the law of all possible infinite sequences of observations (in $(V \times \mathcal X)^\mathbb N$) will be denoted by $P_{\theta}^\mathbb N$. We also write shorthand
\begin{equation}\label{data}
D_N = \{Y_1, \dots, Y_N, X_1, \dots, X_N\},  \qquad N \in \mathbb N,
\end{equation}
 for the given data vector.

\smallskip

Now given a prior probability measure $\Pi$ on $\Theta$ to be specified, and assuming $\theta \sim \Pi$, we make the Bayesian model assumption that $$(Y_{i}, X_i)_{i=1}^N|\theta \sim P_\theta^N$$ which by Bayes' rule generates a conditional posterior distribution of $\theta|(Y_{i}, X_i)_{i=1}^N$ on $\Theta$ --  it will be denoted by $\Pi(\cdot|(Y_{i}, X_i)_{i=1}^N) \equiv \Pi(\cdot|D_N)$. The posterior distribution arises from a dominated family of probability measures (assuming joint measurability of the map $(\theta, x) \to \mathscr G(\theta)(x)$) and is hence given by
\begin{equation}\label{post}
\Pi(A|D_N)\equiv \Pi(A|Y_1, \dots, Y_N, X_1, \dots, X_N) = \frac{\int_A e^{\ell_N(\theta)}d\Pi(\theta)}{\int_\Theta e^{\ell_N(\theta)}d\Pi(\theta)},
\end{equation}
for any Borel set $A$ in $\Theta$. Here, by independence
\begin{equation} \label{likpo}
\ell_N(\theta) = \sum_{i \le N}\ell_i(\theta), \quad \text{ where }\quad \ell_i(\theta)  =  -\frac{1}{2\sigma^2} \|Y_{i} -\mathscr G(\theta)(X_i)\|_V^2,
\end{equation}
is, up to additive constants, the log-likelihood function of the observations. 

\subsection{Gaussian process priors for inverse problems} \label{wmp}

Gaussian priors are widely used in Bayesian inverse problems since \cite{KKSV00, KS04}, among others for uncertainty quantification purposes as discussed in the introduction. In the `non-parametric' setting advocated by Stuart \cite{S10}, when the parameter of interest is a function $\theta: \mathcal Z \to W$, the infinite-dimensional notion of a Gaussian prior is the one of a random map arising from a centred \textit{Gaussian process} (see, e.g., \cite{GN16, GvdV17} for background).

For example, if $\mathcal Z$ is a bounded smooth domain in $\mathbb R^d$, a \textit{Whittle-Mat\'ern process} with index set $\mathcal Z$ and regularity parameter $\alpha$ (cf. Example 11.8 in \cite{GvdV17})  arises as the stationary centred Gaussian process $G=\{G(z), \ z\in \mathcal Z\}$ with covariance kernel 
$$K(x,y)=\int_{\R^d}e^{-i\langle x-y,\xi\rangle_{\R^d}}\bar \mu(d\xi), 
	\quad \bar \mu(d\xi)=(1+\|\xi\|^2_{\R^d})^{-\alpha}d\xi, \quad x,y \in \mathcal Z.$$
From the results in Chapter 11 in \cite{GvdV17} we see that the \textit{reproducing kernel Hilbert space} (RKHS) of $(G(z): z \in \mathcal Z)$ equals the set of restrictions to $\mathcal Z$ of elements in the Sobolev space $H^\alpha(\mathbb R^d)$, which coincides, with equivalent norms, with the Sobolev space $H^\alpha(\mathcal Z)$ over $\mathcal Z$. Moreover, one shows (as in the proof of Lemma I.4 in \cite{GvdV17}) that $G$ has a version with paths belonging almost surely to the H\"older spaces $C^{\beta'}(\mathcal Z)$ for all $\beta'<\alpha-d/2$, and thus defines a Gaussian Borel probability measure on $\Theta=C(\mathcal Z)$ whenever $\alpha>d/2$ (and in fact in $C^\beta(\mathcal Z)$ for any $\beta<\beta'$).

\smallskip

A key challenge for implementation is of course the computation of the posterior distribution in such settings. When the forward map $\mathscr G$ is \textit{linear} then one can show that the posterior distribution (\ref{post}) will also be a Gaussian measure on $\Theta$ so that posterior sampling is fairly straightforward (see \cite{KS04} and, for concrete implementation with Whittle-Mat\'ern priors, e.g., \cite{MNP}). In the case where $\mathscr G$ is non-linear, so that the posterior is \textit{not Gaussian} any more, MCMC methods can be readily used as long as the forward map (and possibly its gradient) can be numerically evaluated, providing feasible statistical methodology for non-linear problems see, e.g., \cite{KKSV00, KS04, CRSW13, VSS13, RHL14, CMPS16, BGLFS17, MNP19} and also Section \ref{numbojumbo} below. Computational guarantees for convergence of such algorithms are also available in the high-dimensional and non-log-concave setting relevant here, see \cite{HSV14, NW20} and references therein.

\smallskip

Regarding statistical (frequentist) properties of posterior measures, the case of linear $\mathscr G$ is again fairly well understood due to the explicit Gaussian structure of the posterior distribution, we refer here only to \cite{KVV11, R13, ALS13, KLS16, MNP, GK19, GVD20} and references therein. The non-linear case, however, remains a formidable challenge. While consistency and contraction rates for Bayesian methods have been established very recently in some settings \cite{MNP19, AN19, GN19}, no guarantees are currently available for the task of uncertainty quantification investigated here (except for \cite{N1} to be discussed below).
  
To address this challenge we will prove Bernstein-von Mises theorems which entail that under suitable hypotheses the \textit{non-Gaussian} posterior measure $\Pi(\cdot|D_N)$ is approximated, in the sense of weak convergence, by a Gaussian distribution with a canonical covariance structure. Our results will hold in $P_{\theta_0}^N$-probability, where $\theta_0$ is the ground truth parameter generating the data (\ref{model}), and for all linear functionals $\langle \theta, \psi \rangle_{L^2}, \theta \sim \Pi(\cdot|D_N),$ with $\psi$ a test function. To limit technicalities we assume that both $\theta_0$ and $\psi$ are \textit{smooth} -- relaxing such conditions is possible (adapting arguments from \cite{N1}) but not the scope of the present paper. 

Rigorous statements will involve an arbitrary metric $d_{weak}$ for weak convergence of probability measures on $\mathbb R$ (see \cite{D02}). If $E^\Pi[\theta|D_N]$ is the posterior mean (a Bochner integral in $C(\mathcal Z)$), if $\Pi^\psi(\cdot|D_N)$ denotes the (through $D_N$ random) probability law of $$\sqrt N \langle \theta - E^\Pi[\theta|D_N], \psi \rangle_{L^2_\zeta(\mathcal Z)},  \qquad \theta \sim \Pi(\cdot|D_N),$$ and for a normal $N(0, \sigma_\psi^2)$ distribution with variance $\sigma^2_\psi$ to be specified, we will prove limit theorems of the form
\begin{equation} \label{weakprob}
d_{weak}(\Pi^\psi(\cdot|D_N), N(0, \sigma^2_\psi)) \to_{N \to \infty}^{P^N_{\theta_0}} 0.
\end{equation}
When (\ref{weakprob}) holds we shall often just say that $\sqrt N \langle \theta - E^\Pi[\theta|D_N], \psi \rangle_{L^2_\zeta(\mathcal Z)}\to^d N(0, \sigma^2_\psi)$ in $P_{\theta_0}^N$-probability, where $\to^d$ denotes convergence in distribution. We obtain general results of this kind in Section \ref{general} but first give their explicit consequences for the main examples (\ref{toypde}) of the Schr\"odinger equation and  non-Abelian $X$-ray transforms.

\subsection{Normal approximation for the Schr\"odinger equation} \label{schrott}

We now consider an inverse problem for a steady state Schr\"odinger equation. Such problems have applications in photo-acoustic tomography \cite{BU10, BR11} and have been studied recently in the Bayesian inference setting in \cite{N1}. For a bounded smooth domain $\mathcal X=\mathcal Z$ in $\mathbb R^d, d \in \mathbb N,$ with boundary $\partial \mathcal X$, let $\lambda=\zeta$ equal the Lebesgue measure on $\mathcal X$ normalised to one. Then consider solutions $u_f$ of the elliptic boundary value problem 
\begin{equation}\label{schrodinger}
	\begin{cases}
	\frac{1}{2} \Delta u -fu =0 \quad \textnormal{on }  \mathcal X,\\
	u=g\quad \textnormal{on } \mathcal \partial \mathcal X,
	\end{cases}
	\end{equation}
	where $f: \mathcal X \to (0,\infty),$ is a positive potential, where $\Delta$ is the Laplacian, and where $g: \partial \mathcal X \to [g_{min}, \infty), g_{min}>0,$ are given smooth `boundary temperatures'. For $\theta \in C(\mathcal X)$ we will parameterise $f = \phi \circ \theta$ where $\phi: \mathbb R \to (f_{min},\infty), f_{min}\ge 0,$ is a smooth bijective `regular link' function chosen as in \cite{NvdGW18}, satisfying in particular $\phi(0)=1$ and $\phi'>0$. We shall write $\phi(\theta), \phi'(\theta)$ for $\phi \circ \theta$ and $\phi' \circ \theta$, respectively, when no confusion can arise. In the notation from earlier in this section we set $$\mathscr G(\theta) \equiv u_{\phi \circ \theta} \in L_\lambda^2(\mathcal X),  \quad V=W=\mathbb R,$$ where we note that for $f=\phi \circ \theta, \theta \in C^\beta(\mathcal X), \beta>0$, a unique $C^2$-solution $u_f$ of (\ref{schrodinger}) exists by standard results for elliptic PDEs \cite{GT98}. Measurements in (\ref{model}) are thus collected \textit{throughout} the domain $\mathcal X$ -- results for the case where only boundary measurements at $\partial \mathcal X$ are available (`Calder\'on type problems') will require a different approach as the inverse problem is then statistically `severely-ill-posed' (see \cite{AN19}).

Now draw $\theta'$ from an $\alpha$-regular Whittle-Mat\'ern Gaussian process (cf.~Subsection \ref{wmp}) supported in $C^\beta(\mathcal X)$ for $0<\beta<\alpha-d/2$, and let the prior $\Pi=\Pi_N$ be the law on $\Theta \equiv C^\beta(\mathcal X)$ of the random function 
\begin{equation} \label{schrottprior}
\frac{\theta'(x)}{N^{d/(4\alpha+2d)}}, \quad x \in \mathcal X.
\end{equation} 
The $N$-dependent rescaling provides additional regularisation of the posterior distribution which is crucial to deal with the global non-linearity of the inverse problem in the proofs (cf.~also Remark 3.5 in \cite{MNP19}).

To state the following theorem, define the space $C^{\infty, 2}(\mathcal X)$ consisting of real-valued functions $f \in C^\infty(\mathcal X)$ such that the partial derivatives $(D^jf)_{| \partial \mathcal X}=0$ vanish for all multi-indices $j$ of order $0 \le |j| \le 2$. Evidently $C^\infty_c(\mathcal X) \subset C^{\infty,2}(\mathcal X)$. We also introduce the Schr\"odinger operator $$\mathbb S_{f}[w] = \frac{1}{2} \Delta w - fw, \quad  w \in C^2(\mathcal X),$$ appearing in the expression for the asymptotic variance. The following theorem extends related results in \cite{N1} to Gaussian process priors, and to the more realistic measurement setting (\ref{discrete}), if the true parameter $\theta_0$ and test function $\psi$ define appropriate elements of $C^\infty(\mathcal X)$. 

\begin{theorem}\label{mainschrott}
Consider the prior $\Pi_N$ from (\ref{schrottprior}) with integer regularity $\alpha$ satisfying 
\begin{equation}\label{ohlord}
\frac{\alpha}{2\alpha+d}\frac{\alpha-d}{\alpha+2-d/2} >\frac{1}{3}.
\end{equation}
Let $\theta \sim \Pi(\cdot|D_N)$ where $\Pi(\cdot|D_N)$ is the posterior measure (\ref{post}) on $\Theta$ arising from observations $D_N$ in model (\ref{model}) with $\mathscr G(\theta)$ the solution of the Schr\"odinger equation (\ref{schrodinger}), $f=\phi \circ \theta$, and where $\phi: \mathbb R \to (f_{min}, \infty), f_{min} \ge 0,$ is a regular link function. Denote the posterior mean by $\bar \theta_N = E^\Pi[\theta|D_N]$, and let $\psi \in C^{\infty,2}(\mathcal X)$. Assume $f_0 = \phi \circ \theta_0$ for some $\theta_0 \in C^\infty(\mathcal X)$ such that $\inf_{x \in \mathcal X}f_0(x) > f_{min}$. Then we have as $N \to \infty$,
$$\sqrt N \langle \theta - \bar \theta_N, \psi \rangle_{L_\lambda^2(\mathcal X)} | D_N  \to^d N(0,  \sigma^2(f_0, \psi)) \quad \text{ in } P_{\theta_0}^N- \text{probability},$$ and moreover that $$\sqrt N \langle \bar \theta_N - \theta_0, \psi \rangle_{L_\lambda^2(\mathcal X)} \to^d N(0, \sigma^2(f_0, \psi))$$ where the asymptotic variance is given by
\begin{equation} \label{varschrott}
\sigma^2(f_0, \psi) = \Big\|\mathbb S_{f_0}\Big[\frac{\psi}{u_{f_0} \phi'(\theta_0)}\Big]\Big\|^2_{L^2_\lambda(\mathcal X)}.
\end{equation}
\end{theorem}

The boundary conditions $u=g>0$ on $\partial \mathcal X$ and regularity assumption $\theta_0 \in C^\infty(\mathcal X)$ ensure that the inverse of the underlying information operator (which is an elliptic order-4 type operator, see (\ref{invopod}) below) exists and maps $C^{\infty,2}(\mathcal X)$ into $C^\infty(\mathcal X)$. This fact is used crucially in the proofs and also implies finiteness of $\sigma^2(f_0, \psi)$ in (\ref{varschrott}).

In the proofs we establish a non-parametric contraction rate $\bar \delta_N \to 0$ of the posterior measure about $\theta_0$ in $\|\cdot\|_\infty$-distance. The rate $\bar \delta_N$ improves if the Gaussian process prior model is more regular. To control non-linear semi-parametric bias terms in the Bernstein-von Mises approximation we require $N \bar \delta_N^3 = o(1)$ in our proofs, giving rise to the condition (\ref{ohlord}). For instance when $d=2$ this requires $\alpha >10$. This can be weakened by obtaining a faster rate than $\bar \delta_N$ (the optimal rate is obtained in \cite{N1} for more restrictive measurements and non-Gaussian priors), but we do not pursue this issue here as we require $\theta_0 \in C^\infty(\mathcal X)$ at any rate (for the mapping properties of the information operator).

\subsection{Normal approximation for non-Abelian $X$-ray transforms}\label{xraynorm}

We now present results comparable to those from the previous subsection for the non-Abelian $X$-ray transform as considered in \cite{PSUGAFA, MNP19}. Applications to neutron spin tomography can be found in \cite{Hetal, Sales17}, see also Section 1.2 in \cite{MNP19}

We let $M\subset \mathbb{R}^2$ be the closed unit disk with boundary $\partial M$. We consider lines in the plane (i.e. geodesics) parametrized by $\gamma(t)=x+tv$, where $x\in \mathbb{R}^{2}$ and $v$ is a direction on the unit circle $S^{1}$.
We only want those lines intersecting our region $M$ of interest and further introduce the influx and outflux boundaries as
\begin{align*}
    \partial_{+} SM &= \{(x,v)\in\partial M\times S^{1}:\;x\cdot v\leq 0\}, \\
    \partial_{-} SM &= \{(x,v)\in\partial M\times S^{1}:\;x\cdot v\geq 0\},    
\end{align*}
where $\cdot$ is the standard dot product in the plane. If we take $(x,v)\in \partial _{+}SM$, then the line $\gamma(t)=x+tv$ will exit the disk in time
\[\tau(x,v):=-2\,x\cdot v.\]

Let $\Phi:M\to \C^{n\times n}$ be a continuous matrix field. Given a line segment (geodesic) $\gamma:[0,\tau]\to M$ with endpoints $\gamma(0), \gamma(\tau)\in \partial M$, we consider $\C^{n\times n}$-valued functions $U=U(t), 0 \le t \le \tau,$ solving the matrix ODE
\begin{align*}
   \frac{d}{dt} U(t) + \Phi(\gamma(t)) U(t) = 0, \qquad U(\tau) = \id.
\end{align*}
We define the scattering data of $\Phi$ on $\gamma$ to be $C_\Phi(\gamma):= U(0)$.
This problem, backward in time for convention here, is well-posed and leads to a unique definition of $U(0)$, containing information about $\Phi$ along the geodesic $\gamma$. Note that when $\Phi$ is scalar, we obtain $\log U(0) = \int_0^{\tau} \Phi (\gamma(t))\ dt$, which is the classical X-ray/Radon transform of $\Phi$ along the ray $\gamma$. Considering the collection of all such data makes up the {\em non-Abelian X-ray transform} of $\Phi$, viewed here as a map 
\begin{align}
    C_\Phi\colon \partial_+ SM\to \C^{n\times n},
    \label{eq:Cphi}
\end{align}
and the goal is to recover $\Phi$ from $C_\Phi$. Inverting Abelian and non-Abelian X-ray transforms are examples of inverse problems in integral geometry, an active field permeating several tomographic imaging methods, see also the recent topical review \cite{IlMo}.  We are most interested here in the case where $\Phi$ takes values in the Lie algebra $\mathfrak{so}(n)$ of skew-symmetric matrices associated to the special orthogonal group $SO(n)$. In this case the scattering data $C_{\Phi}$ maps into $SO(n)$ and the map $\Phi \mapsto C_\Phi$ is known to be injective \cite{E,No,PSUGAFA}. Also, for $n=3$ this is the relevant problem for neutron spin tomography \cite{Hetal, Sales17}. 

Since $M$ is the unit disk, we can parametrise its boundary (the unit circle) $\partial M$ with an angular variable 
$\phi$; similarly the vectors $v$ pointing inside $M$ can be parametrized with an angular variable $\varphi\in [-\pi/2,\pi/2]$ (fan-beam coordinates). The influx boundary $\partial_{+}SM$ can hence be equipped with a normalized area form $\lambda$, $d\lambda:=d\phi\,d\varphi/2\pi^2$.
The other common measure in use is $\cos\varphi d\lambda$ (the symplectic measure) and as we comment below in Remark \ref{remark:themuaffair} the ramifications of choosing one over the other in terms of the Fisher information operator go quite deeply.
In this paper we work exclusively with $\lambda$ as in \cite{MNP19}.

The non-Abelian X-ray transform can be cast into the general statistical model setting from (\ref{model}) as follows: We set $\mathcal Z=M$ endowed with its volume element $\zeta=dx$, and $\mathcal X = \partial_+ SM$ with $\lambda$ defined above. The vector spaces $V=W$ can be taken to equal the space of $n \times n$ real matrices with Frobenius inner product $\langle \cdot, \cdot \rangle_F$. The standard element-wise basis $e_{jk}=\delta_{jk}, 1 \le j,k \le n,$ of $V$ then allows to realise the random vector $\varepsilon \sim N(0, I_V)$ as the i.i.d.~sequence $\varepsilon_{j,k}\sim N(0,1), 1 \le j, k \le n,$ considered in the noise model in \cite{MNP19}. Next we let $\Theta = \times_{j=1}^{\text{\rm dim}(\mathfrak{so}(n))} C(M)$ denote the space of all continuous maps defined on $M$ taking values in $\mathfrak {so}(n)$. Identifying $\theta =\Phi$, the non-linear forward map is then $\mathscr G(\theta)=C_\theta=C_\Phi$ from (\ref{eq:Cphi}).

The linearisation $\mathbb I_{\theta_0}$ of $\mathscr G$ at $\theta_0$ provides a bounded linear map from $L_\zeta^2(M)$ to $L^2_\lambda(\partial_+SM)$ with adjoint $\mathbb I_{\theta_0}^*: L^2_\lambda(\partial_+SM) \to L^2_\zeta(M)$, see Section \ref{sec:mainXray}. There it is further shown that for $\theta_0 \in C^\infty_c(M, \mathfrak{so}(n))$ the information operator $\mathbb I_{\theta_0}^*\mathbb I_{\theta_0}$  is invertible on $C^\infty(M)=C^\infty(M, V)$, in particular,
\begin{equation}
\psi \in C^\infty(M, \mathfrak{so}(n)) \quad \implies \quad \tilde \psi = (\mathbb I_{\theta_0}^* \mathbb I_{\theta_0})^{-1}\psi \in C^\infty(M, \mathfrak{so}(n)).
\label{eq:inversionfisher}
\end{equation}

To construct a prior $\Pi$ on $\Theta$ we follow \cite{MNP19} and construct a $\mathfrak{so}(n)$ valued matrix Gaussian random field on $M$ by taking i.i.d.~copies of Gaussian process priors $B_j: j = 1, \dots, \text{\rm dim}(\mathfrak {so}(n))$. For each component $B_j$, we first draw an $\alpha$-regular ($\alpha \in \mathbb N$) planar Whittle-Mat\'ern Gaussian process $\theta'$ on $M$ (cf.~Subsection \ref{wmp}), with law on $C(M)$ denoted by $\Pi'$. Then we choose as prior for $B_j$ the law of $$\theta_j = \frac{\theta_j'}{N^{1/(2\alpha+2)}}, \quad \theta_j' \sim \Pi',$$ the rescaling playing a comparable role to (\ref{schrottprior}). The product prior probability measure on $\Theta = \times_{j=1}^{\text{\rm dim}(\mathfrak{so}(n))} C(M)$ arising from these coordinate distributions will be denoted by $\Pi_N$.  The following theorem holds for arbitrary smooth test functions $\psi : M \to \mathfrak{so}(n)$. As the prior and posterior are measures concentrated in $\mathfrak{so}(n)$ valued matrix fields, it is natural to require the same range constraint on the test function $\psi$ appearing in the dual pairing $\langle \theta, \psi\rangle_{L^2}$. Further remarks paralleling those following Theorem \ref{mainschrott} about the conditions on $\theta_0, \alpha$ apply to the next theorem as well. 

\begin{theorem}\label{mainpnt}
Consider the preceding Gaussian prior $\Pi_N$ with integer $\alpha>8$. Let $\theta$ be drawn from the posterior distribution $\Pi(\cdot|D_N)$ from (\ref{post}) on $\Theta$ arising from observations $D_N$ in model (\ref{model}), where $\mathscr G(\theta)$ is the non-Abelian $X$-ray transform. Denote the posterior mean by $\bar \theta_N = E^\Pi[\theta|D_N]$, and let $\psi \in C^\infty(M, \mathfrak{so}(n))$. Assume $\theta_0 \in C_c^\infty(M, \mathfrak {so}(n))$. Then we have as $N \to \infty$ and in $P_{\theta_0}^N$-probability, the weak convergence
$$\sqrt N \langle \theta - \bar \theta_N, \psi \rangle_{L^2_\zeta(M)} | D_N  \to^d N(0, \|\mathbb I_{\theta_0} (\mathbb I_{\theta_0}^* \mathbb I_{\theta_0})^{-1}\psi\|^2_{L^2_\lambda(\partial_+ SM)})$$ and moreover that $$\sqrt N \langle \bar \theta_N - \theta_0, \psi \rangle_{L^2_\zeta(M)} \to^d N(0, \|\mathbb I_{\theta_0} (\mathbb I_{\theta_0}^* \mathbb I_{\theta_0})^{-1}\psi\|^2_{L^2_\lambda(\partial_+ SM)}).$$  
\end{theorem}

\begin{Remark} {\rm The inversion of $\mathbb I_{\theta_0}^* \mathbb I_{\theta_0}$ as stated in \eqref{eq:inversionfisher} has its own independent interest and it is one of the innovations of the present paper. In general, for geodesic X-ray transforms, the inversion of the Fisher information operator is a delicate problem and its solution depends on the measure chosen on the influx boundary $\partial_{+}SM$ as this choice determines the adjoint $\mathbb{I}_{\theta_0}^*$. There are two commonly used measures
and in both cases the Fisher information operator becomes an elliptic pseudo-differential operator of order $-1$ in the interior of $M$. However, its boundary behaviour is sensitive to the choice of measure and given the non-local nature of $\mathbb I_{\theta_0}^* \mathbb I_{\theta_0}$ one must understand finer mapping properties that include boundary effects. In \cite{MNP} we considered (in the Abelian case) the Fisher information operator for the symplectic measure, i.e. the natural measure on the space of geodesics (also the measure naturally produced by Santal\'o's formula). In this
case, it turns out that $\mathbb I_{\theta_0}^* \mathbb I_{\theta_0}$ extends as a pseudo-differential operator to a slightly larger manifold containing $M$ and one can make use of transmission properties as developed by H\"ormander and Grubb \cite{Gr}. The upshot of this analysis is the need to incorporate a blow up at the boundary of type $\rho^{-1/2}$ (where $\rho$ is distance to the boundary) when proving Bernstein von-Mises theorems.
In contrast, the second choice of measure which is given by the canonical volume form $\lambda$ on the influx boundary -and the one chosen in this paper- exhibits different behaviour and $\mathbb I_{\theta_0}^* \mathbb I_{\theta_0}$ does not extend
as a pseudo-differential operator to any neighbourhood of $M$. To study the behaviour near the boundary
in the case of the disk we take advantage of the recent developments in \cite{M} which deliver non-standard Sobolev scales with suitable  degenerations at the boundary. The inversion in \eqref{eq:inversionfisher} is the first result of its kind and hints at a more general picture valid on any non-trapping manifold with strictly convex boundary and no conjugate points.}
\label{remark:themuaffair}
\end{Remark}

\subsection{Application to uncertainty quantification}\label{UQ}

Bayesian uncertainty quantification for functionals $\langle \theta, \psi \rangle_{L^2_\zeta(\mathcal Z)},$ is based on level $1-\xi$ Bayesian credible sets 
\begin{equation}
C_N = \{v \in \mathbb R: |v- \langle \bar \theta, \psi \rangle_{L_\zeta^2(\mathcal Z)}| \le R_N \}, \quad \Pi(C_N|D_N)=1-\xi, \quad 0<\xi<1,
\end{equation}
where $\bar \theta = E^\Pi[\theta|D_N]$ is the posterior mean. Construction of the interval $C_N$ requires only computation of that mean and of the quantiles $R_N$ of the posterior distribution, both of which can be calculated approximately along a chain of MCMC samples (see also Section \ref{numbojumbo}). In particular the asymptotic variances appearing in Theorems \ref{mainschrott} and \ref{mainpnt} need not be estimated.

Now using Theorems \ref{mainschrott} and \ref{mainpnt} with $0 \neq \psi \in C^\infty$, and arguing as in Remark 2.9 in \cite{MNP} one shows that the credible interval $C_N$ has valid frequentist coverage of the true parameter $\theta_0$ in the sense that, as $N \to \infty$, $$P_{\theta_0}^N(\langle \theta_0, \psi \rangle_{L_\zeta^2(\mathcal Z)} \in C_N) \to 1-\xi,\qquad \sqrt N R_N \to^{P^N_{\theta_0}} Q^{-1}(1-\xi),$$ with $Q(t)=\Pr(|Z|\le t), t \in \mathbb R,$ where $Z$ is the (for $\psi \neq 0$ non-degenerate) limiting normal distribution occurring in Theorems \ref{mainschrott} or \ref{mainpnt}. In particular the diameter of this confidence interval is optimal in an asymptotic minimax sense, see Section \ref{optimality} for details.

\subsection{Numerical illustration}\label{numbojumbo}

We illustrate our theory by numerical experiments for non-Abelian $X$-ray transforms, following the implementation detailed in Section 4 of \cite{MNP19} with $\mathfrak {so}(3)$ replaced by the (isomorphic) $\mathfrak{su}(2)$. We fix the Euclidean metric on the unit disk, and represent the disk as an unstructured mesh with 886 vertices. We choose an $\mathfrak{su}(2)$-valued matrix field $\Phi = a\ \sigma_1 + b\ \sigma_2 + c\ \sigma_3$ as in \cite{MNP19} with $\sigma_1, \sigma_2,\sigma_3$ the Pauli basis matrices, and $a,b,c$ smooth scalar components characterised by their values at the 886 vertices, see Fig. \ref{fig:functions}. 
\begin{figure}[htpb]
    \centering
    \includegraphics[width=\textwidth]{./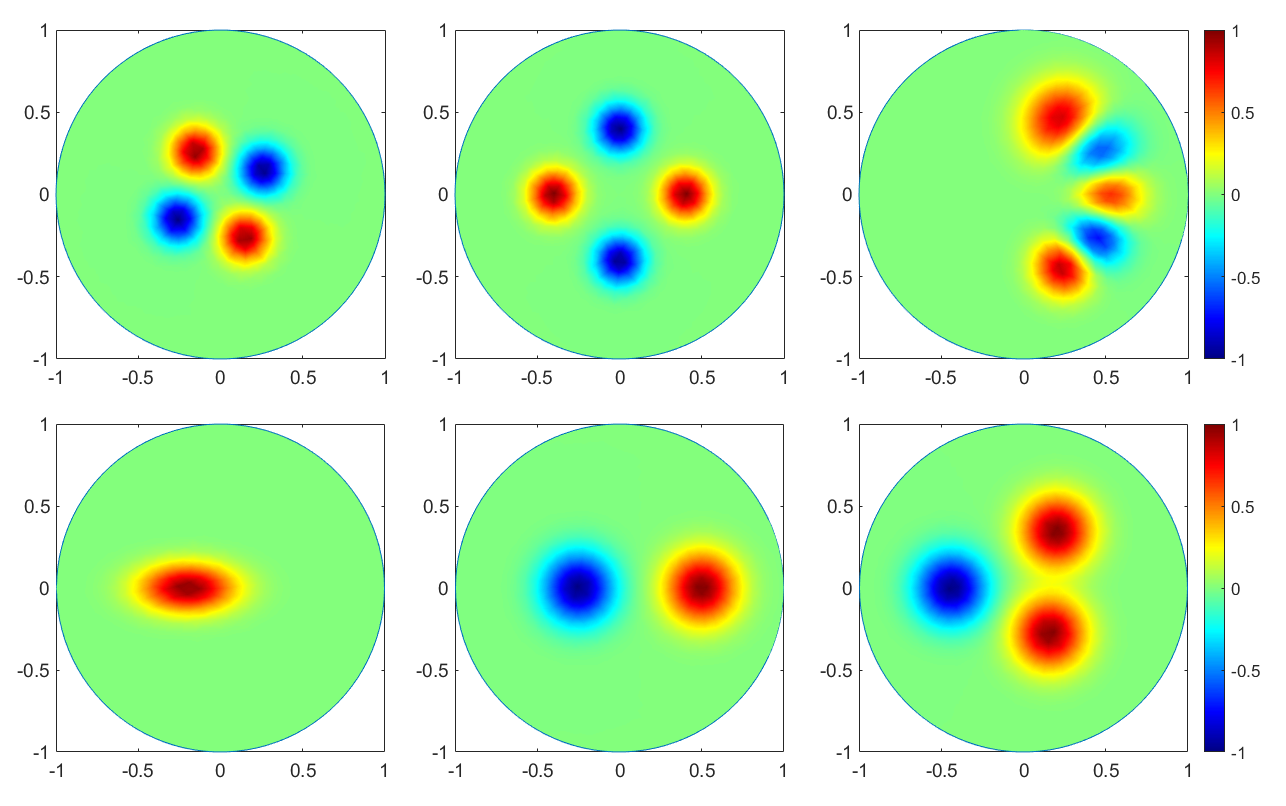}
    \caption{The six functions used (top row: $a$, $b$, $c$; bottom row: $d$, $e$, $f$), all drawn on the same color scale.}
    \label{fig:functions}
\end{figure}

For $N=600$, then $N=1000$, we compute $C_\Phi$ over $N$ geodesics drawn at random, whose entries are then corrupted by additive noise with $\sigma = 0.1$. The prior is set to be of Mat\'ern type with parameters $\alpha=3$ (and length-scale parameter $\ell=0.2$, see \cite{MNP19} for full details).  

The preconditioned Crank-Nicolson (pCN) algorithm is then used to compute $N_s = 10^5$ iterations of a Markov chain $\{\Phi_n\}_{n\le N_s}$ targeting the posterior distribution of $\Phi|D_N$, cf.~Sec.4, \cite{MNP19}. As the purpose here is to explore and display the main features of the posterior, the initial condition is chosen as the ground truth $\Phi$, which shortens the burn-in phase. The sequence $\{\Phi_n\}_{n=1000}^{N_s}$ represents a family of posterior draws.

We fix three $\mathfrak{su}(2)$-valued test functions 
\begin{align*}
    \Psi_1 = d\ \sigma_1 + e\ \sigma_2 + f\ \sigma_3,\quad \Psi_2 = e\ \sigma_1 + f\ \sigma_2 + d\ \sigma_3,\quad \Psi_3 = f\ \sigma_1 + d\ \sigma_2 + e\ \sigma_3,
\end{align*}
where the functions $d,e,f$ appear on Fig. \ref{fig:functions}, and we are interested in the statistics of the smooth aspects $\langle \Phi, \Psi_1\rangle_{L^2}$, $\langle \Phi, \Psi_2\rangle_{L^2}$, $\langle \Phi, \Psi_3\rangle_{L^2}$ of the posterior measure. Figure \ref{fig:histo} displays histograms of the tracked quantities $\{ \langle \Phi_n, \Psi_j\rangle_{L^2}, \ 0\le n\le N_s, \ j\in \{1,2,3\} \}$ along each chain, illustrating both approximate posterior normality and concentration as $N$ increases as predicted by Theorem \ref{mainpnt}. Note that although all three test functions used have the same $L^2(M)$ norm, the predicted asymptotic variances $\|\mathbb I_{\theta_0} (\mathbb I_{\theta_0}^* \mathbb I_{\theta_0})^{-1}\Psi_j\|^2_{L^2_\lambda(\partial_+ SM)}$ should differ for $1\le j\le 3$, as observed on Figure \ref{fig:histo}. The empirical posterior standard deviations further corroborate the frequentist validity of the uncertainty quantification provided by these credible sets established in Section \ref{UQ}.

\begin{figure}[htpb]
    \centering
    \includegraphics[width=\textwidth]{./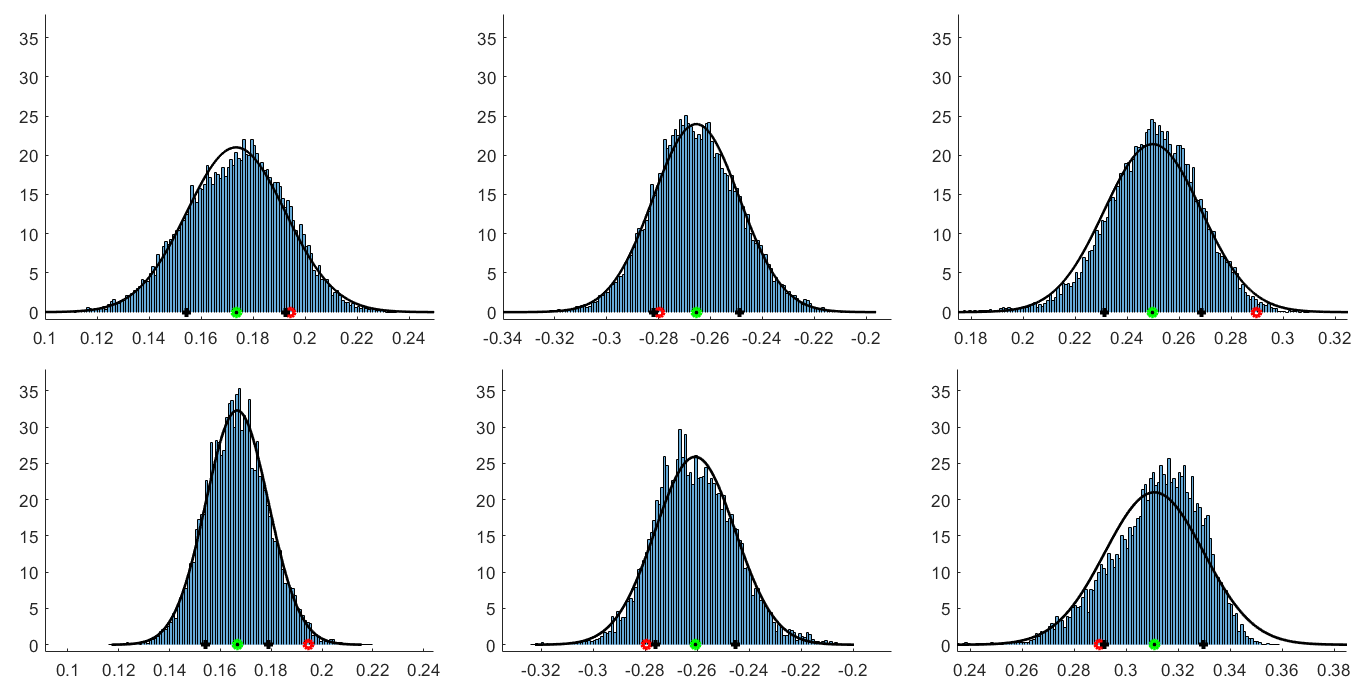}
    \caption{Histograms of the MCMC chains for (left to right): $\langle \Phi_n, \Psi_1\rangle$, $\langle \Phi_n, \Psi_2\rangle$, $\langle \Phi_n, \Psi_3\rangle$, rescaled as probability densities, and Gaussians with empirical mean $\hat{m}$ and variance $\hat{\sigma}^2$ superimposed. Axes scales are uniform across all plots.  Red dot: true value; green dot: mean; black dots: $\hat{m}\pm 1 \hat{\sigma}$. Top to bottom: $N=600$, $N=1000$. The spreads decrease from the top row to the bottom row, most noticeably by $25\%$ on the left plot.}
    \label{fig:histo}
\end{figure}

\section{BvM in regression models with Gaussian process priors} \label{general}

In this section we provide general conditions under which Bernstein-von Mises type approximations can be proved for posterior distributions arising from Gaussian process priors in the general nonlinear regression model (\ref{model}). Theorems \ref{mainschrott} and \ref{mainpnt} will be deduced by verifying these conditions.

\subsection{Analytical hypotheses}

We start with the key hypotheses on the forward map $\mathscr G$ from (\ref{fwdG}). Recall that $\Theta$ is a parameter set arising as a linear subspace of $L^\infty(\mathcal Z, W)$. The first condition concerns the uniform boundedness as well as the global Lipschitz continuity of $\mathscr G$ on $\Theta$ both for $L^2$ and $\|\cdot\|_\infty$ norms. While restrictive, such assumptions are often satisfied due to `compactification' or `energy preservation' properties of the PDEs describing the forward map $\mathscr G$. The second condition requires that $\mathscr G$ is differentiable at the `true value' $\theta_0 \in \Theta$ in a suitable sense.

\begin{Condition}\label{blip}
There exists a fixed constant $C>0$ such that we have $$\|\mathscr G(\theta)\|_\infty \le C, \quad\text{ and }\|\mathscr G(\theta)-\mathscr G(\theta')\|' \le C \|\theta-\theta'\|, \quad \text{ for all } \theta, \theta' \in \Theta,$$ where either $\|\cdot\|'=\|\cdot\|=\|\cdot\|_\infty$ or $\|\cdot\|'=\|\cdot\|_{L^2_\lambda(\mathcal X)}$ and $\|\cdot\|=\|\cdot\|_{L^2_\zeta(\mathcal Z)}$.
\end{Condition}
\begin{Condition}\label{quadratic}
For $\theta_0 \in \Theta$ and any $h \in \Theta$ suppose that as $\|h\|_\infty \to 0$, $$\|\mathscr G(\theta_0 + h) - \mathscr G(\theta_0) - D\mathscr G_{\theta_0}[h]\|_{L^2_\lambda(\mathcal X, V)} \equiv \rho_{\theta_0}[h]=o(\|h\|_\infty)$$ for some operator $$\mathbb I_{\theta_0} \equiv D\mathscr G_{\theta_0}: (\Theta, \langle \cdot, \cdot \rangle_{L^2_\zeta(\mathcal Z, W)}) \to L^2_\lambda(\mathcal X,V)$$ that is a continuous linear map. Moreover we assume that $\mathbb I_{\theta_0}$ is also continuous as a map from $(\Theta, \|\cdot\|_\infty) \to L^\infty(\mathcal X)$. 
\end{Condition}

When considering inference on linear functionals $\langle \psi, \theta \rangle_{L^2_\zeta(\mathcal Z)}$ of $\theta$, the invertibility of  the `information' (or normal) operator $\mathbb I_{\theta_0}^* \mathbb I_{\theta_0}$ induced by $\mathbb I_{0}$ in directions $\psi$ will be required. Here $\mathbb I_{\theta_0}^*: L^2_\lambda(\mathcal X,V) \to \overline{(\Theta, \langle \cdot, \cdot \rangle_{L^2_\zeta(\mathcal Z, W)})}$ denotes the adjoint map of $\mathbb I_{\theta_0}$, and we will employ the following `source type' condition on $\psi$.
\begin{Condition}\label{infokey}
Given $\psi \in \Theta$ and $\mathbb I_{\theta_0}$ from Condition \ref{quadratic}, suppose there exists $\tilde \psi=\tilde \psi_{\theta_0} \in \Theta$ such that  $\mathbb I_{\theta_0}^* \mathbb I_{\theta_0} \tilde \psi = \psi$, that is, $\langle \mathbb I_{\theta_0}^* \mathbb I_{\theta_0} \tilde \psi -\psi, h \rangle_{L^2_\zeta(\mathcal Z, W)}=0$ for all $h \in \Theta$.
\end{Condition}

We now turn to the choice of Gaussian process priors and their reproducing kernel Hilbert spaces (RKHS). As is common in Bayesian non-parametric statistics \cite{vdVvZ08, GvdV17}, we will require assumptions on the small deviation asymptotics of the prior measure $\Pi$. While the displayed probability in the following condition still involves the map $\mathscr G$, one can readily use (\ref{blip}) to simplify the condition to one involving only the prior small probabilities of $\|\theta-\theta_0\|_{L^2_\zeta(\mathcal Z)}$.
\begin{Condition}\label{prior}
The  priors $\Pi=\Pi_N$ consist of Gaussian Borel probability measures on the measurable linear subspace $\Theta$ of $L^\infty(\mathcal Z)$.  The RKHS of $\Pi_N$ is given by the linear subspace $\mathcal H_N$ of $\Theta$, with RKHS inner product $\langle \cdot, \cdot \rangle_{\mathcal H_N}$. Suppose further that $\sup_N E^{\Pi_N}\|\theta\|_{L^2(\mathcal Z)}^4<\infty$ and that for some sequence $\delta_N \to 0$ satisfying $e^{-N \delta_N^2}N^2 \to_{N \to \infty} 0$, some $\bar d>0$ and all $N$ large enough, $$\pi(\delta_N):=\Pi_N (\|\mathscr G(\theta) - \mathscr G(\theta_0)\|_{L_\lambda^2(\mathcal X)} \le \delta_N) \ge \exp\{-\bar dN \delta_N^2\}.$$
\end{Condition}

Note that norms of tight Gaussian probability measures always have all higher moments finite, but we require this bound to be uniform in $N$, hence the condition.

The next condition concerns an initial result about global contraction properties of the posterior measure near the true value $\theta_0 \in \Theta$. For non-linear inverse problems with Gaussian process priors such results have recently been obtained in \cite{MNP19, AN19, GN19}. 
\begin{Condition}\label{DN} For a prior $\Pi_N$ as in Condition \ref{prior}, consider the posterior distribution $\Pi(\cdot|D_N)$ in (\ref{post}) arising from data $D_N$ in the model (\ref{model}). Let the `ground truth' $\theta_0 \in \Theta$ generate data $D_N \sim P_{\theta_0}^N$, and let $(\mathcal R, \|\cdot\|_\mathcal R)$ be a normed linear measurable subspace of $L^\infty(\mathcal Z)$. Assume that as $N \to \infty$ and for real sequences $\bar \delta_N\to 0, M_N \ge 1$, such that $\sqrt N \bar \delta_N \to \infty$,
\begin{equation}\label{linfcon}
\Pi\big(\theta:\|\theta\|_\mathcal R \le M_N, \|\theta-\theta_0\|_\infty\le \bar \delta_N|D_N\big)= 1- o_{P_{\theta_0}^N} (\eta_N).
\end{equation}
Here $\eta_N=e^{-(L+1)N \delta_N^2}$ with $L = 2(2C^2+1)+\bar d$ where $C$ is as in Condition \ref{blip} and $\bar d, \delta_N$ as in Condition \ref{prior}.
\end{Condition}
Following ideas in \cite{MNP19}, verification of Condition \ref{DN} can be based on i) a global \textit{stability (or inverse continuity) estimate} for the map $\theta \mapsto \mathscr G(\theta)$, ii) a `forward' contraction rate result for the posterior law of $\mathscr G(\theta)$ about $\mathscr G(\theta_0)$ and iii) the fact that for \textit{rescaled} Gaussian priors, posteriors automatically concentrate (with high probability) on suitable bounded sets in regularisation spaces $\mathcal R= C^{\beta}$ for some $\beta$ related to the path regularity of the Gaussian process. A general result providing bounds for ii) and iii) can be found in Theorem 14 in the Appendix of \cite{GN19}. Stability estimates i) are more problem specific -- for the applications from Subsections \ref{schrott} and \ref{xraynorm} we rely on Lemma 28 in \cite{NvdGW18} and Corollary 2.3 in \cite{MNP19}, respectively.

\smallskip

The preceding `regularised parameter spaces' 
\begin{equation} \label{keyset}
\Theta_{N}= \{\theta \in \Theta: \|\theta\|_\mathcal R \le M_N, \|\theta-\theta_0\|_\infty\le \bar \delta_N\}
\end{equation}
play a key role in our proofs via the following quantitative condition that allows to control the non-linearity of the likelihood function of the model (\ref{model}), the discretisation errors arising from statistical sampling, and the sensitivity of $\Pi_N$ with respect to small perturbations in $\tilde \psi$-directions. Let $\mathscr J_N$ be an upper bound for the following (`Dudley'-type) integral of the Kolmogorov metric entropy of $\Theta_N$;
\begin{equation}\label{kolmodude}
\mathscr J_N(s, t) \ge \int_0^{s} \sqrt{\log 2 N(\Theta_N, \|\cdot\|_\infty, t \epsilon)} d\epsilon,\quad s,t>0,
\end{equation}
where $N(\Theta_N, \|\cdot\|_\infty, \epsilon)$ are the usual $\epsilon$-covering numbers of the set $\Theta_N$ for the $\|\cdot\|_\infty$-distance (i.e., the minimal number of $\epsilon$-balls for $\|\cdot\|_\infty$ required to cover $\Theta_N$).

\begin{Condition}\label{quant}
Suppose that $\tilde \psi=\tilde \psi_{\theta_0}$ from Condition \ref{infokey} belongs to $\mathcal H_N \cap \mathcal R$ and that it satisfies, for $\delta_N$ from Condition \ref{prior} and $\|\cdot\|_{\mathcal H_N}$ the norm induced by $\langle \cdot, \cdot \rangle_{\mathcal H_N}$,
\begin{equation}\label{rkhsbds}
\lim_{N \to \infty} \delta_N \|\tilde \psi \|_{\mathcal H_N} =0.
\end{equation}
Moreover, for $\bar \delta_N$ as in Condition \ref{DN}, suppose that as $N \to \infty$,
\begin{equation}\label{simple}
\sqrt N \bar \delta_N^2 \mathscr J_N(1, \bar \delta_N^2) \to 0.
\end{equation}
Further for $\sigma_N$ a sequence such that for all $N$ large enough and all $t \in \mathbb R$ fixed, $$\sigma_N \ge \sup_{\theta \in \Theta_N} \rho_{\theta_0}[\theta-\theta_0-(t/\sqrt N) \tilde \psi],$$ assume that as $N \to \infty$,
\begin{equation}\label{mbdc}
\max\Big(N (\sigma^2_N+\sigma_N \bar \delta_N), \sqrt N \mathscr J_N(\sigma_N, 1), \bar \delta_N \sqrt{\log N} \mathscr J^2_N(\sigma_N, 1\big)/\sigma^2_N\Big) \to 0.
\end{equation}
\end{Condition}

 In prototypical situations where $\mathcal R$ equals a fixed ball in a H\"older space $C^\beta(\mathcal Z)$ for a $d$-dimensional domain $\mathcal Z$, and when the approximation in Condition \ref{quadratic} is quadratic ($\rho_{\theta_0}(h)=O(\|h\|_\infty^2)$), it can be shown (see Section \ref{calcul}) that Conditions (\ref{simple}) and (\ref{mbdc}) reduce to the much simpler conditions 
\begin{equation}\label{trirat}
N \bar \delta_N^3 \to 0,\quad \text{ and }~\beta>2d.
\end{equation} 
The requirements on $\alpha$ in Theorems \ref{mainschrott} and \ref{mainpnt} ultimately arise from (\ref{trirat}) for the initial uniform contraction rate $\bar \delta_N$ of the posterior distribution. 

\subsection{Bernstein-von Mises theorems}

Our first main theorem shows that the posterior distribution in our non-linear inverse problem is asymptotically Gaussian when integrated against fixed  test functions $\psi  \in \Theta$, and when centred at
\begin{equation}\label{effcent}
\hat \Psi_N = \langle \theta_0, \psi \rangle_{L_\zeta^2(\mathcal Z, W)} + \frac{1}{N} \sum_{i=1}^N \langle \mathbb I_{\theta_0}\tilde \psi_{\theta_0}(X_i), \varepsilon_i \rangle_V,
\end{equation}
We recall that the next limit is to be understood in the sense of (\ref{weakprob}).

\begin{theorem}\label{main}
Let $\theta \sim \Pi(\cdot|D_N)$ be a posterior draw and assume Conditions \ref{blip} -\ref{quant} are satisfied. Then we have as $N \to \infty$ and in $P_{\theta_0}^N$-probability,
$$\sqrt N \big( \langle \theta, \psi \rangle_{L_\zeta^2(\mathcal Z)}  - \hat \Psi_N \big) | D_N \to^d N(0, \|\mathbb I_{\theta_0} \tilde \psi_{\theta_0}\|_{L^2_\lambda(\mathcal X, V)}^2).$$
\end{theorem}

To use an approximation as the last one for uncertainty quantification (as in Section \ref{UQ}), we need to choose a \textit{feasibly computable centring statistic} instead of the (infeasible) $\hat \Psi_N$. A desirable choice, both for inference and computation purposes via MCMC, is the mean $$ \langle \bar \theta_N, \psi \rangle_{L^2_\zeta(\mathcal Z, W)},~\text{ where }~ \bar \theta_N = E^\Pi[\theta|D_N],$$ of the posterior distribution. [By uniform boundedness of $\mathscr G$, (\ref{post}) and Condition \ref{prior}, the Bochner integral $E^\Pi[\theta|D_N]$ can be shown to exists for any given data vector $D_N$.] 

\begin{theorem}\label{mainmean}
In the setting of Theorem \ref{main}, if $\bar \theta_N = E^\Pi[\theta|D_N]$ denotes the posterior mean, then we have as $N \to \infty$,
$$\sqrt N \langle \theta - \bar \theta_N, \psi \rangle_{L^2_\zeta(\mathcal Z)} | D_N  \to^d N(0, \|\mathbb I_{\theta_0} \tilde \psi_{\theta_0}\|_{L^2_\lambda(\mathcal X,V)}^2)\quad  \text{ in } P_{\theta_0}^N-\text{ probability}.$$ Moreover, as $N \to \infty$, we also have $$\sqrt N \langle \bar \theta_N - \theta_0, \psi \rangle_{L_\zeta^2(\mathcal Z, W)} \to^d N(0, \|\mathbb I_{\theta_0} \tilde \psi_{\theta_0}\|_{L^2_\lambda(\mathcal X, V)}^2).$$
\end{theorem}

\subsection{LAN expansion and asymptotic optimality}\label{optimality}

We finally establish the local asymptotically normal (LAN) expansion of our model and deduce from it the semi-parametric information bound (cf.~\cite{vdV91, vdV98}) for inference on $\langle \theta, \psi \rangle_{L^2_\zeta(\mathcal Z)}$. This implies the optimality of Theorem \ref{mainmean} as long as $\mathbb I_{\theta_0}$ is injective, as is the case in our model examples. [In the Schr\"odinger case, injectivity of $\mathbb I_{\theta_0}$ from (\ref{score}) follows from the uniqueness of solutions of (\ref{schrottiso}) and positivity of $u_{f_0}, \phi'(\theta_0)$, while the injectivity of $\mathbb I_{\theta_0}$ in the setting of Theorem \ref{mainpnt} is proved in \cite{PSUGAFA}.]

 \begin{Proposition} \label{LANexp}
Suppose Conditions \ref{blip} and \ref{quadratic} hold true. Then the log-likelihood ratio process in the model (\ref{model}) satisfies, for every fixed $h \in L^\infty(\mathcal Z)$ and as $N \to \infty$, the asymptotic expansion
\begin{equation}\label{LAN}
\log \frac{dP_{\theta_0+ h/\sqrt N}^N}{dP_{\theta_0}^N}(D_N) = W_N(h) - \frac{1}{2}\|\mathbb I_{\theta_0}[h]\|_{L^2_\lambda(\mathcal X,V)}^2 + o_{P_{\theta_0}^N}(1)
\end{equation}
for random variables 
\begin{equation}\label{WN}
W_N \equiv  \frac{1}{\sqrt N} \sum_{i=1}^N \langle \mathbb I_{\theta_0}h(X_i), \varepsilon_i \rangle_V \to_{N \to \infty}^d N(0, \|\mathbb I_{\theta_0}[h]\|_{L^2_\lambda(\mathcal X,V)}^2).
\end{equation}
Assuming also Condition \ref{infokey} and that $\mathbb I_{\theta_0}: \Theta \to L^2_\lambda(\mathcal X)$ is injective, the semi-parametric information bound for optimal inference on the functional $\langle \theta, \psi \rangle_{L^2_\zeta(\mathcal Z)}$ based on observations $D_N$ is given by 
\begin{equation}\label{crlb}
\|\mathbb I_{\theta_0} \tilde \psi_{\theta_0}\|_{L^2_\lambda(\mathcal X, V)}^2.
\end{equation}
\end{Proposition}
\begin{proof}
An expansion for $\ell_N(\theta_0 +h/\sqrt N)-\ell_N(\theta_0)$ under $P^N_{\theta_0}$ can be obtained as in the proof of Proposition \ref{lanlap} (replacing $\theta, t \tilde \psi_{\theta_0}$ by $\theta_0$ and $h$, respectively), from which the LAN expansion (\ref{LAN}) can be derived without difficulty, and (\ref{WN}) follows directly from the central limit theorem. To find the information lower bound for estimating the functional $\kappa(\theta)=\langle \theta, \psi \rangle_{L_\zeta^2(\mathcal Z)}$ we need to find the Riesz representer $\tilde \kappa$ of $$\kappa :(\Theta, \langle \cdot, \cdot \rangle_{LAN}) \to \mathbb R, \quad\text{where }\quad  \langle \cdot, \cdot \rangle_{LAN} = \langle \mathbb I_{\theta_0}[\cdot], \mathbb I_{\theta_0}[\cdot] \rangle_{L_\lambda^2(\mathcal X)}$$ is a Hilbert space inner product since $\mathbb I_{\theta_0}$ is linear and injective. But Condition \ref{infokey} implies
\begin{equation}
\langle h, \tilde \psi_{\theta_0}  \rangle_{LAN} = \langle h, \mathbb I_{\theta_0}^*\mathbb I_{\theta_0}[\tilde \psi_{\theta_0}] \rangle_{L^2_\zeta(\mathcal Z)} = \langle h, \psi \rangle_{L_\zeta^2(\mathcal Z)} = \kappa(h), \quad h \in \Theta,
\end{equation}
hence $\tilde \kappa =\tilde \psi_{\theta_0} \in \Theta,$ and arguing as in, e.g., Sec.~7.5 in \cite{N1}, the information lower bound is given by $\langle \tilde \kappa, \tilde \kappa \rangle_{LAN}$, as desired.
\end{proof}

\begin{Remark} \label{Optimality} \normalfont
By convergence of moments established in the proof of Theorem \ref{mainmean}, $$N E_{\theta_0}^N \langle \bar \theta_N - \theta_0, \psi \rangle^2_{L^2_\zeta(\mathcal Z)} \to  \|\mathbb I_{\theta_0} \tilde \psi\|_{L^2_\lambda(\mathcal X)}^2$$ as $N \to \infty$, and this is optimal in the minimax sense by the preceding proposition, as then, by the semi-parametric asymptotic minimax theorem \cite{vdV98},
$$\lim_N \inf_{\tilde \psi_N: (V \times \mathcal X)^N \to \mathbb R} ~\sup_{\theta: \|\theta-\theta_0\|_{L^2(\mathcal Z) \le 1/\sqrt N}}NE^N_{\theta} (\tilde \psi_N - \langle \theta, \psi \rangle_{L^2(\mathcal Z)})^2 = \|\mathbb I_{\theta_0} \tilde \psi\|_{L^2_\lambda(\mathcal X)}^2.$$ In particular, no confidence region can have a smaller uniform asymptotic diameter as the one constructed in Section \ref{UQ}.
\end{Remark}

\section{Proofs of Theorems \ref{main} and \ref{mainmean}}

We set $\sigma^2=1$ to simplify notation. We follow ideas from \cite{CN14, CR15, N1, NS19} and  prove a Bernstein-von Mises theorem by proving convergence of the moment generating functions (Laplace transforms) of $\sqrt N \big(\langle \theta, \psi \rangle_{L^2(\mathcal Z)}|D_N - \hat \Psi_N\big)$ with centring as in (\ref{effcent}), which implies weak convergence (in probability), and thus Theorem \ref{main}. This follows by obtaining LAN-type approximations of suitable likelihood-ratios within the support of a suitably `localised' posterior distribution. The stochastic linearisation as well as the discretisation error are controlled by tools from empirical process theory  in Subsection \ref{eprocbd}. That one can centre at the posterior mean instead of $\hat \Psi_N$ (i.e., Theorem \ref{mainmean}) will be proved in Sec.~\ref{tripos}.

\subsection{Localisation of the posterior measure}

We first record a standard stochastic lower bound on the posterior denominator commonly used in Bayesian nonparametric statistics. 
 \begin{Lemma} \label{oldsong} Assume Condition \ref{prior} holds for some $\delta_N, \bar d$ and let $C$ be the constant from Condition \ref{blip}. Then $P_{\theta_0}^N(\mathcal C_N) \to 1$ as $N \to \infty$ where $$\mathcal C_N = \left\{\int_{\Theta} e^{\ell_N(\theta) - \ell_N(\theta_0)}d\Pi(\theta)  \ge e^{-LN \delta^2_N} \right\}, \quad L=2(2C^2+1) + \bar d.$$ Moreover, if $T_N$ is a measurable subset of $\Theta$ such that $$\Pi(T_N) \le e^{-D_0 N \delta_N^2} \quad\text{ for some }\quad D_0>L ,$$ then as $N \to \infty$, $$\Pi(T_N|D_N) =O_{P_{\theta_0}^N}(e^{-(D_0-L) N \delta_N^2})=o_{P_{\theta_0}^N}(1).$$ 
\end{Lemma}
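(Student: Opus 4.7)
My plan is to follow the standard Bayesian nonparametrics argument in two steps, using Condition \ref{prior} to get the evidence lower bound and Condition \ref{blip} for the boundedness that controls the quadratic part of the log-likelihood ratio.

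For the first part, I would localise to the small-ball set $B_N := \{\theta \in \Theta : \|\mathscr G(\theta) - \mathscr G(\theta_0)\|_{L^2_\lambda(\mathcal X)} \le \delta_N\}$, on which Condition \ref{prior} gives the prior mass bound $\Pi(B_N) \ge e^{-\bar d N\delta_N^2}$. Restricting the denominator integral to $B_N$ and applying Jensen's inequality against the conditional probability measure $\Pi(\,\cdot\,\cap B_N)/\Pi(B_N)$ reduces the problem to lower-bounding
$$Z := \frac{1}{\Pi(B_N)}\int_{B_N}\bigl(\ell_N(\theta)-\ell_N(\theta_0)\bigr)\,d\Pi(\theta).$$
Substituting $Y_i = \mathscr G(\theta_0)(X_i) + \varepsilon_i$ into (\ref{likpo}) and applying Fubini exhibits $Z$ as an i.i.d.\ sum (over $i$) of a centred noise-linear piece and a non-positive quadratic piece whose expectation is bounded below by $-N\delta_N^2/2$ (by the definition of $B_N$). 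The variance is controlled by a constant multiple of $N\delta_N^2$ with an explicit $C$-dependence, using $\|\mathscr G(\theta)-\mathscr G(\theta_0)\|_\infty \le 2C$ from Condition \ref{blip} together with $\mathrm{Var}(\varepsilon_i) = I_V$ and Cauchy--Schwarz to pull the outer integral inside the square. Since the assumption $e^{-N\delta_N^2}N^2 \to 0$ forces $N\delta_N^2 \to \infty$, Chebyshev then yields $Z \ge -2(2C^2+1)N\delta_N^2$ on a $P_{\theta_0}^N$-event of probability tending to $1$; combined with $\log \Pi(B_N) \ge -\bar d N\delta_N^2$ this delivers the claimed bound on $\mathcal C_N$ with $L = 2(2C^2+1) + \bar d$.

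For the second part, I will use the change-of-measure identity to bound the posterior numerator in mean. Fubini together with $E_{\theta_0}^1 e^{\ell_1(\theta)-\ell_1(\theta_0)} = 1$ gives
$$E_{\theta_0}^N \int_{T_N} e^{\ell_N(\theta)-\ell_N(\theta_0)}\,d\Pi(\theta) = \Pi(T_N) \le e^{-D_0 N\delta_N^2},$$
so Markov's inequality ensures that for any $M > 0$ the numerator is at most $M e^{-D_0 N\delta_N^2}$ on a set of $P_{\theta_0}^N$-probability $\ge 1-1/M$. Intersecting with $\mathcal C_N$ and using the definition (\ref{post}) yields $\Pi(T_N|D_N) \le M e^{-(D_0 - L)N\delta_N^2}$, which is the stated $O_{P_{\theta_0}^N}$-bound.

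The argument is routine; the only item requiring care is the explicit variance bound for $Z$ in order to land on exactly the stated constant $L = 2(2C^2+1)+\bar d$, but no substantive obstacle arises.
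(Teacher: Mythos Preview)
Your proposal is correct and essentially the same as the paper's proof: the paper invokes Lemma~7.3.2 in \cite{GN16} after verifying its KL-ball hypotheses via the bounds $E_{\theta_0}^1[\ell_1(\theta_0)-\ell_1(\theta)] \le \delta_N^2$ and $E_{\theta_0}^1[\ell_1(\theta_0)-\ell_1(\theta)]^2 \le (2C^2+1)\delta_N^2$ on $B_N$ (this fixes $\epsilon=\sqrt{2C^2+1}\,\delta_N$ and hence the constant $2(2C^2+1)$), and your Jensen/Chebyshev argument is precisely the standard proof of that cited lemma. The second part---Fubini with $E_{\theta_0}^N e^{\ell_N(\theta)-\ell_N(\theta_0)}=1$, Markov on the numerator, intersection with $\mathcal C_N$---is identical in both.
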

\begin{proof}
We apply Lemma 7.3.2 in \cite{GN16} with $\mathcal P$ there equal to our $\Theta$ and with $p=dP^1_\theta, p_0=dP^1_{\theta_0}$ the model densities for variables $X=(Y_1,X_1)$ on $(V \times \mathcal X)$ generating the i.i.d.~data (\ref{model}) for respective choices of $\theta$, and with probability measure $\nu = \Pi(\cdot \cap B)/\Pi(B)$ on the set $B=B_\epsilon \subset \Theta$ defined in that lemma. If we define sets $$B_{(N)} = \{\theta \in \Theta: \|\mathscr G(\theta)- \mathscr G(\theta_0)\|_{L^2_\lambda(\mathcal X)} \le \delta_N \}$$ then $B_{(N)} \subset B_\epsilon$ for $\epsilon=\sqrt{2C^2+1}\delta_N$ since, noting that $-\log (p/p_0)= \ell_1(\theta_0)-\ell_1(\theta)$ in the notation (\ref{likpo}), standard computations with likelihood ratios (e.g., Lemma 23 in \cite{GN19}, or p.224 in \cite{GvdV17}) and Condition \ref{blip} imply
 $$E_{\theta_0}^1[\ell_1(\theta_0)-\ell_1(\theta)] = \frac{1}{2} \|\mathscr G(\theta)-\mathscr G(\theta_0)\|_{L_\lambda^2(\mathcal X)}^2,$$ $$E_{\theta_0}^1[\ell_1(\theta_0)-\ell_1(\theta)]^2 \le (2C^2+1) \|\mathscr G(\theta)-\mathscr G(\theta_0)\|_{L_\lambda^2(\mathcal X)}^2.$$
We hence obtain from that lemma (with $c=1$), as $N \to \infty$,
 $$P_{\theta_0}^N\left(\int_{B_\epsilon} e^{\ell_N(\theta) - \ell_N(\theta_0)}d\Pi(\theta)  \ge e^{-2(2C^2+1)N \delta^2_N} \Pi(B_\epsilon) \right) \le \frac{1}{(2C^2+1)N\delta^2_N} \to 0.$$ Now the first limit follows since $\Theta \supset B_\epsilon$ and since $\Pi(B_\epsilon) \ge \Pi(B_{(N)}) \ge \pi(\delta_N) \ge e^{-\bar dN\delta_N^2}$ by Condition \ref{prior}. Finally, we see on the event $\mathcal C_N$ that
$$\Pi(T_N|D_N)= \frac{\int_{T_N} e^{\ell_N(\theta)-\ell_N(\theta_0)}d\Pi(\theta)}{\int_{\Theta} e^{\ell_N(\theta)-\ell_N(\theta_0)}d\Pi(\theta)}
\le e^{LN \delta^2_N} Z_N$$ where $Z_N:= \int_{T_N} e^{\ell_N(\theta)-\ell_N(\theta_0)}d\Pi(\theta)=O_{P_{\theta_0}^N}(e^{-D_0N \delta^2_N})$ by Markov's inequality since Fubini's theorem and $E_{\theta_0}^Ne^{\ell_N(\theta)-\ell_N(\theta_0)}=1$ imply
$E_{\theta_0}^N Z_N \le \Pi(T_N)\le e^{-D_0 N \delta_N^2}$.
  \end{proof}
  
 Now since $\tilde \psi$ from Condition \ref{infokey} defines an element of the RKHS $\mathcal H_N$ of $\Pi_N$ by Condition \ref{quant}, if $\theta \sim \Pi_N$ then by properties of RKHS the variable $\langle \theta, \tilde \psi \rangle_{\mathcal H_N}$ has distribution $N(0, \|\tilde \psi\|^2_{\mathcal H_N})$. Hence if we define $$T_N = \Big\{\theta: \frac{|\langle \theta, \tilde \psi \rangle_{\mathcal H_N}|}{\|\tilde \psi\|_{\mathcal H_N}}> \sqrt{2L+1} \sqrt N \delta_N \Big\},$$ then the tail inequality for standard normal random variables implies that $\Pi(T_N)\le e^{-(2L+1) N \delta_N^2}$ and hence the previous lemma applies, so that for $\Theta_N$ from (\ref{keyset}) and
 \begin{equation}\label{key2}
 \bar \Theta_N := \Theta_N \cap T_N^c \quad \text{ we have }\quad \Pi(\bar \Theta^c_N|D_N) =O_{P_{\theta_0}^N} (e^{-(L+1) N \delta_N^2})=o_{P_{\theta_0}^N}(1)
 \end{equation}
 as $N \to \infty$, using also Condition \ref{DN}. In the proofs that follow we consider $\theta \sim \Pi^{\bar \Theta_N}(\cdot|D_N)$ where the posterior (\ref{post}) is taken to arise from prior probability measure $$\Pi^{\bar \Theta_N}\equiv \frac{\Pi(\cdot \cap \bar \Theta_N)}{\Pi(\bar \Theta_N)}$$ equal to $\Pi$ restricted to $\bar \Theta_N$ from (\ref{keyset}) and renormalised. Indeed, Condition \ref{DN} and standard arguments (e.g., p.142 in \cite{vdV98}) then imply, for $\|\cdot\|_{TV}$ the total variation distance on probability measures on $\Theta$, that as $N \to \infty$
\begin{equation}\label{constv}
\|\Pi(\cdot|D_N) - \Pi^{\bar \Theta_N}(\cdot|D_N)\|_{TV} \le 2 \Pi(\bar \Theta^c_N|D_N) \to^{P_{\theta_0}^N} 0,
\end{equation}
and then also $d_{weak}(\Pi(\cdot|D_N), \Pi^{\bar \Theta_N}(\cdot|D_N)) \to^{P_{\theta_0}^N} 0$ for any metric $d_{weak}$ for weak convergence. It hence suffices to prove Theorem \ref{main} for $\Pi^{\bar \Theta_N}(\cdot|D_N)$ instead of $\Pi(\cdot|D_N)$.

\subsection{Uniform LAN approximation of the posterior Laplace transform}

\begin{Proposition}\label{lanlap}
For $\theta, \psi \in \Theta$ and $\tilde \psi = \tilde \psi_{\theta_0}$ from Condition \ref{infokey}, define $$\theta_{(t)}= \theta - \frac{t}{\sqrt N} \tilde \psi_{\theta_0},\quad t \in \mathbb R.$$ Let $\hat \Psi_N$ be as in (\ref{effcent}) and $\bar \Theta_N$ as in (\ref{key2}). Then we have for every fixed $t \in \mathbb R$ and a sequence  $R_N=o_{P_{\theta_0}^N}(1)$ that as $N \to \infty$ 
\begin{equation*}
E^{\Pi^{\bar \Theta_N}} \big[\exp\{t \sqrt N \big(\langle \theta, \psi \rangle_{L^2(\mathcal Z)} - \hat \Psi_N \big)\} | D_N \big] = e^{\frac{t^2}{2} \|\mathbb I_{\theta_0} \tilde \psi\|_{L^2(\mathcal X)}^2} \times \frac{\int_{\bar \Theta_N}  e^{\ell_N(\theta_{(t)})}d\Pi(\theta)}{\int_{\bar \Theta_N} e^{\ell_N(\theta)}d\Pi(\theta)} \times e^{R_N}.
\end{equation*}
\end{Proposition}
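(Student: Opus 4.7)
The plan is to convert the posterior Laplace transform, via Bayes' formula, into a ratio of integrals against the prior,
\begin{equation*}
E^{\Pi^{\bar \Theta_N}}\bigl[e^{t\sqrt N(\langle \theta, \psi\rangle_{L^2_\zeta} - \hat \Psi_N)} \mid D_N\bigr] = \frac{\int_{\bar \Theta_N} e^{t\sqrt N(\langle \theta, \psi\rangle_{L^2_\zeta} - \hat \Psi_N) + \ell_N(\theta)}\, d\Pi(\theta)}{\int_{\bar \Theta_N} e^{\ell_N(\theta)}\, d\Pi(\theta)},
\end{equation*}
and then to establish a pointwise LAN-type identity on $\bar\Theta_N$,
\begin{equation*}
t\sqrt N \bigl(\langle \theta, \psi \rangle_{L^2_\zeta} - \hat \Psi_N\bigr) + \ell_N(\theta) = \ell_N(\theta_{(t)}) + \frac{t^2}{2}\|\mathbb I_0 \tilde \psi\|_{L^2_\lambda}^2 + r_N(\theta),
\end{equation*}
with $\sup_{\theta \in \bar \Theta_N} |r_N(\theta)| = o_{P_{\theta_0}^N}(1)$. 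Then $e^{r_N(\theta)}$ lies pointwise between $e^{-\sup_{\bar\Theta_N}|r_N|}$ and $e^{\sup_{\bar\Theta_N}|r_N|}$, so it can be replaced by a single scalar factor $e^{R_N}$ with $R_N=o_{P_{\theta_0}^N}(1)$, pulled outside both integrals, yielding the stated formula.

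To derive the pointwise identity I would expand $\ell_N(\theta_{(t)})-\ell_N(\theta)$ by substituting $Y_i=\mathscr G(\theta_0)(X_i)+\varepsilon_i$ and completing the squares, producing a linear-in-noise term $\sum_i\langle \varepsilon_i, (\mathscr G(\theta_{(t)})-\mathscr G(\theta))(X_i)\rangle_V$ and a quadratic term $\tfrac{1}{2}\sum_i[\|(\mathscr G(\theta)-\mathscr G(\theta_0))(X_i)\|_V^2 - \|(\mathscr G(\theta_{(t)})-\mathscr G(\theta_0))(X_i)\|_V^2]$. Condition \ref{quadratic} lets me replace $\mathscr G(\theta_{(t)})-\mathscr G(\theta)$ by $-(t/\sqrt N)\mathbb I_0\tilde\psi$ up to an $L^2_\lambda$-residual bounded by $\sigma_N$ from Condition \ref{quant}, and $\mathscr G(\theta)-\mathscr G(\theta_0)$ (resp.\ $\mathscr G(\theta_{(t)})-\mathscr G(\theta_0)$) by $\mathbb I_0(\theta-\theta_0)$ (resp.\ $\mathbb I_0(\theta_{(t)}-\theta_0)$) up to comparable residuals. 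Replacing the empirical averages over $(X_i)$ by their $\lambda$-expectations, the linear-in-noise piece assembles into $-t\sqrt N(\hat\Psi_N-\langle \theta_0,\psi\rangle_{L^2_\zeta})$; the cross-term in the quadratic collapses via the identity
\begin{equation*}
\langle \mathbb I_0(\theta-\theta_0),\mathbb I_0\tilde\psi\rangle_{L^2_\lambda}=\langle \theta-\theta_0,\mathbb I_0^*\mathbb I_0\tilde\psi\rangle_{L^2_\zeta}=\langle \theta-\theta_0,\psi\rangle_{L^2_\zeta}
\end{equation*}
into $t\sqrt N\langle \theta-\theta_0,\psi\rangle_{L^2_\zeta}$, which is the crucial use of Condition \ref{infokey}; and the pure $(t^2/N)$ term produces the deterministic constant $-\frac{t^2}{2}\|\mathbb I_0\tilde\psi\|_{L^2_\lambda}^2$. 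Rearranging yields the desired identity.

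The main obstacle is the uniform-in-$\theta$ control of $r_N(\theta)$ across the infinite-dimensional set $\bar\Theta_N$. The remainder splits into deterministic linearisation residuals, bounded in $L^2_\lambda$ by $\sigma_N$ and contributing $O(N\sigma_N^2 + N\sigma_N\bar\delta_N)$ (absorbed by the first entry of (\ref{mbdc})), and stochastic empirical-process fluctuations. Two kinds of $\theta$-indexed processes arise: (i) a sub-Gaussian noise process $\frac{1}{\sqrt N}\sum_i \langle \varepsilon_i, r_\theta(X_i)\rangle_V$ with increment envelope $O(\sigma_N)$, whose uniform modulus is controlled by Dudley's integral $\sqrt N\,\mathscr J_N(\sigma_N,1)$; and (ii) centred sample means of $\theta$-indexed functions of envelope $O(\bar\delta_N^2)$, controlled by $\sqrt N\bar\delta_N^2\,\mathscr J_N(1,\bar\delta_N^2)$ together with a Talagrand/Bernstein-type bound whose Poissonian tail forces the $\bar\delta_N\sqrt{\log N}\,\mathscr J_N^2(\sigma_N,1)/\sigma_N^2$ term of (\ref{mbdc}). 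The quantitative hypotheses (\ref{simple})--(\ref{mbdc}) in Condition \ref{quant} are calibrated precisely so that each contribution is $o_{P_{\theta_0}^N}(1)$; the requisite general empirical-process bounds constitute the main technical work and are expected to be developed in Subsection \ref{eprocbd}.
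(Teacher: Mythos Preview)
Your proposal is correct and matches the paper's approach: rewrite the Laplace transform via Bayes' formula, expand $\ell_N(\theta)-\ell_N(\theta_{(t)})$ under $P_{\theta_0}^N$ into a noise term and a quadratic term, linearise each at $\theta_0$ via Condition \ref{quadratic}, invoke $\mathbb I_0^*\mathbb I_0\tilde\psi=\psi$ to produce the $t\sqrt N\langle\theta-\theta_0,\psi\rangle$ cancellation, and defer the uniform empirical-process remainders to Subsection \ref{eprocbd} (Lemmas \ref{remgp} and \ref{rememp}). One small correction to your attribution of the bounds: the Poissonian term $\bar\delta_N\sqrt{\log N}\,\mathscr J_N^2(\sigma_N,1)/\sigma_N^2$ in (\ref{mbdc}) is actually needed for the \emph{noise} process (your (i)), not the design process (ii)---the envelope of $\langle\varepsilon,r_\theta(X)\rangle_V$ is $|\varepsilon|\bar\delta_N$, unbounded in $\varepsilon$, so a pure sub-Gaussian Dudley bound does not suffice and the paper applies the full moment inequality (Theorem 3.5.4 in \cite{GN16}); the design-only process (ii) has bounded envelope $O(\bar\delta_N^2)$ and is controlled by (\ref{simple}) alone.
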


\begin{proof}
For $W_N$ as in (\ref{WN}) with $h=\tilde \psi$, the posterior Laplace transform equals
\begin{equation*}
E^{\Pi^{\bar \Theta_N}} \big[e^{t \sqrt N (\langle \theta, \psi \rangle_{L^2 (\mathcal Z)} - \hat \Psi_N )} | D_N \big] = \frac{\int_{\bar \Theta_N}  e^{t \sqrt N \langle \theta-\theta_0, \psi \rangle_{L^2(\mathcal Z)} - tW_N +\ell_N(\theta)-\ell_N(\theta_{(t)})+\ell_N(\theta_{(t)})}d\Pi(\theta)}{\int_{\bar \Theta_N} e^{\ell_N(\theta)}d\Pi(\theta)}
\end{equation*}
The main step in the proof is a uniform in $\theta \in \bar \Theta_N$ perturbation expansion of the log-likelihood ratios under $P_{\theta_0}^N$, recalling (\ref{likpo}) and $\sigma=1$,
\begin{align*}
&\ell_N(\theta)-\ell_N(\theta_{(t)}) \\
& = -\frac{1}{2} \sum_{i=1}^N \big(\|Y_i - \mathscr G(\theta)(X_i)\|_V^2 - \|Y_i - \mathscr G(\theta_{(t)})(X_i)\|_V^2 \big) \\
&= -\frac{1}{2} \sum_{i=1}^N \big(\|\mathscr G(\theta_0)(X_i) - \mathscr G(\theta)(X_i) + \varepsilon_i\|_V^2 - \|\mathscr G(\theta_0)(X_i)- \mathscr G(\theta_{(t)})(X_i)+\varepsilon_i\|_V^2 \big) \\
&= -\sum_{i=1}^N \Big( \langle \varepsilon_i, \mathscr G(\theta_0)(X_i) - \mathscr G(\theta)(X_i) \rangle_V -  \langle \varepsilon_i, \mathscr G(\theta_0)(X_i) - \mathscr G(\theta_{(t)})(X_i) \rangle_V  \Big) \\
&\quad \quad -\frac{1}{2} \sum_{i=1}^N \Big( \|\mathscr G(\theta_0)(X_i) - \mathscr G(\theta)(X_i)\|_V^2 -  \|\mathscr G(\theta_0)(X_i) - \mathscr G(\theta_{(t)})(X_i)\|_V^2  \Big) \equiv I+II.
\end{align*} 
About term I, we `linearise' the map $\mathscr G$ at $\theta_0$ in each inner product to obtain
\begin{align*}
I& =  \sum_{i=1}^N \langle \varepsilon_i, D\mathscr G_{\theta_0}(X_i)[\theta-\theta_{(t)}]\rangle_V  \\
& \quad + \sum_{i=1}^N \langle \varepsilon_i, \mathscr G(\theta)(X_i) - \mathscr G(\theta_0)(X_i) - D\mathscr G_{\theta_0}(X_i)[\theta-\theta_0]\rangle_V \\ 
& \quad - \sum_{i=1}^N \langle \varepsilon_i, \mathscr G(\theta_{(t)})(X_i) - \mathscr G(\theta_0)(X_i) - D\mathscr G_{\theta_0}(X_i)[\theta_{(t)}-\theta_0]\rangle_V \\
& = \frac{t}{\sqrt N}\sum_{i=1}^N \langle \varepsilon_i, D\mathscr G_{\theta_0}(X_i)[\tilde \psi]\rangle_V + R_{(0)}(\theta) - R_{(t)}(\theta) = t W_N +R_{(0)}(\theta) - R_{(t)}(\theta) ,
\end{align*}
noting that $\theta_{(0)}=\theta$ and where the `remainder empirical processes' are given by $$R_{(t)} \equiv \sum_{i=1}^N \langle \varepsilon_i, \mathscr G(\theta_{(t)})(X_i) - \mathscr G(\theta_0)(X_i) - D\mathscr G_{\theta_0}(X_i)[\theta_{(t)}-\theta_0]\rangle_V.$$ We show in Lemma \ref{remgp} below that for all $t \in \mathbb R$ fixed, 
\begin{equation}\label{remgauss}
\sup_{\theta \in \Theta_N} |R_{(t)}(\theta)| = o_{P_{\theta_0}^N}(1)
\end{equation}
so that these terms form a part of the sequence $R_N$.

For term II we write $E^X$ for the expectation under the $X_i$'s only so that
\begin{align*}
&-\frac{1}{2} \sum_{i=1}^N \Big( \|\mathscr G(\theta_0)(X_i) - \mathscr G(\theta)(X_i)\|_V^2 - E^X \|\mathscr G(\theta_0)(X_i) - \mathscr G(\theta)(X_i)\|_V^2 \\
&\quad -  \|\mathscr G(\theta_0)(X_i) - \mathscr G(\theta_{(t)})(X_i)\|_V^2 + E^X \|\mathscr G(\theta_0)(X_i) - \mathscr G(\theta)(X_i)\|_V^2  \Big) \\
&- \frac{N}{2} \|\mathscr G(\theta_0) - \mathscr G(\theta)\|_{L^2(\mathcal X)}^2 + \frac{N}{2} \|\mathscr G(\theta_0) - \mathscr G(\theta_{(t)})\|_{L^2(\mathcal X)}^2.
\end{align*}
The  sums in the first two lines are empirical processes and are shown in Lemma \ref{rememp} below to be $o_{P_{\theta_0}^N}(1)$ uniformly in $\theta \in \Theta_N$ for every fixed $t$, and can thus also be absorbed into $R_N$.

For the terms in the last line of the last display, we can further decompose
\begin{align*} 
\|\mathscr G(\theta_0)-\mathscr G(\theta_{(t)})\|_{L^2(\mathcal X)}^2 &=\|\mathscr G(\theta_{(t)})-\mathscr G(\theta_0)-D\mathscr G_{\theta_0}[\theta_{(t)}-\theta_0]+D\mathscr G_{\theta_0}[\theta_{(t)}-\theta_0]\|_{L^2(\mathcal X)}^2 \\
&= \|D\mathscr G_{\theta_0}[\theta_{(t)}-\theta_0]\|^2_{L^2(\mathcal X)} \\
& ~~~~+ 2 \langle D\mathscr G_{\theta_0}[\theta_{(t)}-\theta_0], \mathscr G(\theta_{(t)}) - \mathscr G(\theta_0) - D\mathscr G_{\theta_0}[\theta_{(t)}-\theta_0]\rangle_{L^2(\mathcal X)} \\
&~~~~+ \| \mathscr G(\theta_{(t)}) - \mathscr G(\theta_0) - D\mathscr G_{\theta_0}[\theta_{(t)}-\theta_0]\|_{L^2(\mathcal X)}^2  
\end{align*}
including also the case $\theta=\theta_{(0)}$ by convention for $t=0$. Now using Conditions \ref{quadratic}, \ref{quant} and the Cauchy-Schwarz inequality the last two remainder terms are bounded by a constant multiple of
\begin{align*}
 \sup_{\theta \in \Theta_N} \big[\rho_{\theta_0}^2(\theta_{(t)}-\theta_0) + \|\theta_{(t)}-\theta_0\|_{L^2} \rho_{\theta_0}(\theta_{(t)}-\theta_0)  \big] \lesssim \sigma_N^2 + \sigma_N \bar \delta_N = o(1/N).
\end{align*}
The remaining terms in the expansion are 
\begin{align*}
&\frac{N}{2}\big(\big\|D\mathscr G_{\theta_0}[\theta-\theta_0 -\frac{t}{\sqrt N} \tilde \psi]\big\|_{L^2(\mathcal X, V)}^2 - \big\|D\mathscr G_{\theta_0}[\theta-\theta_0]\big\|_{L^2(\mathcal X, V)}^2  \big) \\
&=- t \sqrt N \langle D\mathscr G_{\theta_0}[\theta-\theta_0], D\mathscr G_{\theta_0}[\tilde \psi]\rangle_{L^2(\mathcal X, V)} + \frac{t^2}{2} \|D\mathscr G_{\theta_0}[\tilde \psi]\big\|_{L^2(\mathcal X, V)}^2 \\
&= - t\sqrt N \langle \theta-\theta_0, \mathbb I_{\theta_0}^* \mathbb I_{\theta_0} \tilde \psi \rangle_{L^2(\mathcal Z, W)} + \frac{t^2}{2} \|\mathbb I_{\theta_0}[\tilde \psi]\big\|_{L^2(\mathcal X, V)}^2
\end{align*}
which, combined with Condition \ref{infokey}, the bounds from term $I$ and the identity in the first display in this proof,  implies the result.
\end{proof}

\subsection{Stochastic bounds on remainder terms and discretisation error} \label{eprocbd}

The following two key lemmas use tools from infinite-dimensional probability to bound the collections of empirical processes appearing as remainder terms in the proof of Proposition \ref{lanlap}. While that proposition considers localisation to the sets $\bar \Theta_N$, the following bounds actually hold uniformly in the larger classes $\Theta_N$ from (\ref{keyset}).

\begin{Lemma}\label{remgp}
We have (\ref{remgauss}).
\end{Lemma}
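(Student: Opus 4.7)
The plan is to view $R_{(t)}$, for fixed $t\in\mathbb R$, as a Gaussian process in $\theta$ conditionally on $(X_i)_{i=1}^N$, and to bound its supremum by Dudley chaining together with a Talagrand-type inequality that controls the random intrinsic diameter. Writing $g_\theta(x):=\mathscr G(\theta_{(t)})(x)-\mathscr G(\theta_0)(x)-D\mathscr G_{\theta_0}(x)[\theta_{(t)}-\theta_0]$, one has $R_{(t)}(\theta)=\sum_{i=1}^N\langle\varepsilon_i,g_\theta(X_i)\rangle_V$ with $\|g_\theta\|_{L^2_\lambda}\le\sigma_N$ by the definition of $\sigma_N$ and $\|g_\theta\|_\infty\lesssim\bar\delta_N$ from the first case of Condition~\ref{blip} together with the $\|\cdot\|_\infty$-continuity of $\mathbb I_0$ in Condition~\ref{quadratic}. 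I would first split $R_{(t)}(\theta)=R_{(t)}(\theta_0)+[R_{(t)}(\theta)-R_{(t)}(\theta_0)]$; since $\|t\tilde\psi/\sqrt N\|_\infty\to 0$, Condition~\ref{quadratic} gives $\rho_{\theta_0}(-t\tilde\psi/\sqrt N)=o(1/\sqrt N)$, and conditionally on $X$ the baseline $R_{(t)}(\theta_0)$ is a centred one-dimensional Gaussian with variance of expected size $N\rho_{\theta_0}^2=o(1)$, so $R_{(t)}(\theta_0)=o_{P_{\theta_0}^N}(1)$.

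For the centred supremum, conditionally on $X$ the process has intrinsic pseudometric $d_\omega(\theta,\theta')^2=\sum_i\|g_\theta(X_i)-g_{\theta'}(X_i)\|_V^2$. The same Lipschitz considerations give $\|g_\theta-g_{\theta'}\|_\infty\lesssim\|\theta-\theta'\|_\infty$, hence $d_\omega(\theta,\theta')\le\bar L\sqrt N\,\|\theta-\theta'\|_\infty$ and
\begin{equation*}
N(\Theta_N,d_\omega,\epsilon)\le N\bigl(\Theta_N,\|\cdot\|_\infty,\epsilon/(\bar L\sqrt N)\bigr).
\end{equation*}
The crux is then a Talagrand-type bound (e.g.\ Theorem~3.3.9 in \cite{GN16}) for the empirical process $\theta\mapsto\sum_i\|g_\theta(X_i)\|_V^2-N\|g_\theta\|_{L^2_\lambda}^2$ on $\Theta_N$, whose summands are bounded by $\lesssim\bar\delta_N^2$ and whose weak variance is of order $\bar\delta_N^2\sigma_N^2$; the expectation of its supremum is controlled by Dudley applied to the Rademacher process indexed by $\{g_\theta\}$, producing a $\bar\delta_N\sqrt N\,\mathscr J_N(\sigma_N,1)$ contribution. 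Combined, this yields on an event of $P_{\theta_0}^N$-probability $1-o(1)$,
\begin{equation*}
D_\omega^2:=\sup_{\theta\in\Theta_N}d_\omega(\theta,\theta_0)^2\lesssim N\sigma_N^2+\bar\delta_N\sqrt N\,\mathscr J_N(\sigma_N,1)+\bar\delta_N\sigma_N\sqrt{N\log N}+\bar\delta_N^2\log N.
\end{equation*}

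Applying Dudley conditionally on $X$ on this good event, and changing variables $u=\epsilon/(\bar L\sqrt N)$, I get
\begin{equation*}
E_\varepsilon\!\left[\sup_{\theta\in\Theta_N}\bigl|R_{(t)}(\theta)-R_{(t)}(\theta_0)\bigr|\,\Big|\,X\right]\lesssim\sqrt N\,\mathscr J_N(\sigma_N,1),
\end{equation*}
while Borell--TIS concentration adds a conditional Gaussian fluctuation of order $D_\omega\sqrt{\log N}$. Each resulting summand is precisely one of the three terms in (\ref{mbdc}) and is therefore $o(1)$, which delivers (\ref{remgauss}). The main obstacle is exactly the uniform control of $D_\omega$: the naive deterministic bound $D_\omega\le 2\bar L\sqrt N\bar\delta_N$ would force the much stronger requirement $\sqrt N\,\mathscr J_N(\bar\delta_N,1)\to 0$, and it is only the mixed $L^\infty/L^2$ Talagrand estimate, exploiting both the pointwise scale $\bar\delta_N$ and the finer $L^2$-scale $\sigma_N$, that tunes the chaining so that $\mathscr J_N(\sigma_N,1)$ alone drives the rate--which is precisely why the delicate third term in (\ref{mbdc}) is imposed.
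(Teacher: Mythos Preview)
Your strategy is genuinely different from the paper's, and while the outline is reasonable, there is a real gap in the chaining step.

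\textbf{What the paper does.} The paper does \emph{not} condition on the design. It treats $R_{(t)}(\theta)=\sum_i\langle\varepsilon_i,g_\theta(X_i)\rangle_V$ as an empirical process in the jointly i.i.d.\ variables $Z_i=(\varepsilon_i,X_i)$, with indexing class $\mathcal F=\{(e,x)\mapsto e\,g_{\theta,j}(x):\theta\in\Theta_N\}$, envelope $F_N(e,x)\asymp|e|\,\bar\delta_N$, and weak variance $\sup_\theta Ef_\theta^2\le\sigma_N^2$. A single application of the uniform-entropy moment inequality (Theorem~3.5.4 in \cite{GN16}) then yields, after the substitution $\rho=\bar\delta_N\epsilon$,
\[
E\sup_{\theta\in\Theta_N}|R_{(t)}(\theta)|\ \lesssim\ \sqrt N\,\mathscr J_N(\sigma_N,1)\ +\ \frac{\bar\delta_N\sqrt{\log N}\,\mathscr J_N^2(\sigma_N,1)}{\sigma_N^2},
\]
which are exactly the second and third members of (\ref{mbdc}). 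No conditioning, no Talagrand step, and the deterministic variance $\sigma_N$ enters directly---there is never a random diameter to control.

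\textbf{Where your argument does not close.} Your conditional Dudley integral is $\sqrt N\,\mathscr J_N(D_\omega/\sqrt N,1)$, and to replace the upper limit by $\sigma_N$ you need $D_\omega^2\lesssim N\sigma_N^2$ on the good event. But your own Talagrand bound for $D_\omega^2$ contains the term $\bar\delta_N\sqrt N\,\mathscr J_N(\sigma_N,1)$, and nothing in Condition~\ref{quant} forces $\bar\delta_N\sqrt N\,\mathscr J_N(\sigma_N,1)\lesssim N\sigma_N^2$: the three hypotheses in (\ref{mbdc}) only say that $N\sigma_N^2$, $\sqrt N\,\mathscr J_N(\sigma_N,1)$ and $\bar\delta_N\sqrt{\log N}\,\mathscr J_N^2/\sigma_N^2$ each tend to zero, which does not order the first two relative to each other. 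When this extra term dominates, your Dudley upper limit is larger than $\sigma_N$, and the integral may exceed $\sqrt N\,\mathscr J_N(\sigma_N,1)$. The assertion that ``each resulting summand is precisely one of the three terms in (\ref{mbdc})'' is therefore not justified---in particular, neither the Borell--TIS contribution $D_\omega\sqrt{\log N}$ nor the enlarged Dudley integral matches any of them in general. (In the concrete H\"older examples the route can be pushed through with extra work, but not under the abstract Condition~\ref{quant} as stated.) The paper avoids all of this by using the \emph{deterministic} weak variance in the one-shot empirical-process bound, which is precisely what the second, Poissonian, term of Theorem~3.5.4 in \cite{GN16} is designed to absorb.

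A minor point: your Borell--TIS step is redundant. Once the conditional Dudley expectation is $o(1)$ on an event of probability $1-o(1)$, Markov's inequality (conditionally on $X$) already gives $\sup_\theta|R_{(t)}(\theta)-R_{(t)}(\theta_0)|=o_{P_{\theta_0}^N}(1)$.
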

\begin{proof}
For $t$ fixed define new functions $g_\theta: \mathcal X \to V$ as $$g_\theta = \mathscr G(\theta_{(t)})(\cdot) - \mathscr G(\theta_0)(\cdot) - D\mathscr G_{\theta_0}(\cdot)[\theta_{(t)}-\theta_0].$$ Then the remainder term from (\ref{remgauss}), viewed as a stochastic process indexed by $\theta \in\Theta_N$, equals a centred (since $E\varepsilon_i=0$) \textit{empirical process} for the jointly i.i.d.~variables $(X_i, \varepsilon_i)$ of the form
\begin{equation*} 
|R_{(t)}(\theta)| \equiv \big|\sum_{i=1}^N \langle \varepsilon_i, g_\theta(X_i)\rangle_V \big| \equiv \big|\sum_{j=1}^{p_V} \sum_{i=1}^N \varepsilon_{i,j} g_{\theta,j}(X_i)\big| \le \sum_{j=1}^{p_V} \big|\sum_{i=1}^N \varepsilon_{i,j} g_{\theta,j}(X_i)\big|.
\end{equation*}
Here $g_{\theta, j}$ are the entries of the vector field $g_\theta \in V$, and the $\varepsilon_{i,j}$ are all i.i.d.~$N(0,1)$ variables. We will now bound the supremum over $\Theta_N$ of the each of the last $p_V$ summands by using a moment inequality for the empirical process $\{\sum_{i=1}^N f_\theta(Z_i): f \in \mathcal F\}$ where, for every $1 \le j \le p$ fixed (and with $e$ denoting a real variable in this proof in slight abuse of notation), $$f_\theta \in \mathcal F \equiv \mathcal F_j =\{f_\theta(z) = e g_{\theta, j}(x): \theta \in \Theta_N\}, \quad z=(e,x) \in \mathbb R \times \mathcal X,$$ and $Z_1, \dots, Z_N$ are i.i.d.~copies of the variables $Z=(\varepsilon, X) \sim N(0,1) \times \lambda =P$. 

We will apply Theorem 3.5.4 in \cite{GN16} but to do so need to calculate some preliminary bounds: First, by independence of $X, \varepsilon$, the `weak' variances of $\mathcal F$ are of order $$\sup_{\theta \in \Theta_N}Ef^2_\theta(Z) = \sup_{\theta \in \Theta_N} Eg_\theta^2(X) \le \sup_{\theta \in \Theta_N}  \rho^2_{\theta_0}(\theta_{(t)}-\theta_0) \leq \sigma_N^2$$ by Conditions \ref{quadratic} and \ref{quant}. Next, by Condition \ref{blip}, the $L^\infty$-norm mapping properties of $D\mathscr G_{\theta_0}$ (Condition \ref{quadratic}) and the definition of $\Theta_N$ we have $$\sup_{\theta \in \Theta_N}\|g_{\theta,j}\|_\infty \lesssim \|\theta_{(t)} - \theta_0\|_\infty \lesssim \bar \delta_N (1 + \|\tilde \psi\|_\infty) \lesssim \bar \delta_N.$$ As a consequence the preceding empirical process has point-wise envelopes $$\sup_{\theta \in \Theta_N}|f_\theta(e, x)| \lesssim |e| \bar \delta_N \equiv F_N(e, x) \quad \forall (e,x) \in \mathbb R \times \mathcal X,$$ in particular $F_N>0$ $P$-a.s.~and $$\|F\|_{L^2(P)}^2 := \int_{\mathbb R \times \mathcal X} F^2_N(z)dP(z) \lesssim \bar \delta_N^2, \quad \|F\|_{L^2(Q)}^2 :=  \int_{\mathbb R \times \mathcal X} F^2_N(z)dQ(z) = \bar\delta^2_N s_Q^2,$$ where, for any (discrete, finitely supported) probability measure $Q$ on $\mathbb R \times \mathcal X$, we have set $s_Q^2:= \int_{\mathbb R \times \mathcal X} e^2dQ(e,x)$. Finally, we have again from Condition \ref{blip} and \ref{quadratic}, and for any $\theta, \theta' \in \Theta$ and some fixed constant $c_0$ that
\begin{align*}
\|f_\theta-f_{\theta'}\|_{L^2(Q)}&:=  \sqrt{\int_\mathbb R  \int_\mathcal  X e^2 (g_{\theta, j}(x)-g_{\theta',j}(x))^2 dQ(e,x)} \\
&\le s_Q \|g_{\theta, j} - g_{\theta', j}\|_\infty \\
&\le s_Q (\|\mathscr G(\theta_{(t)}) - \mathscr G(\theta_{(t)}')\|_\infty + \|\mathbb I_{\theta_0}[\theta_{(t)}-\theta_{(t)}']\|_\infty) \\
& \leq c_0 \|F_N\|_{L^2(Q)} \|\theta-\theta'\|_\infty/\bar \delta_N.
\end{align*}
We conclude that any $\bar \delta_N \epsilon/c_0$-covering of $\Theta_N$ for the norm $\|\cdot\|_\infty$  induces a $\|F_N\|_{L^2(Q)}\epsilon$-covering of $\mathcal F$ for the $L^2(Q)$ norm, and so $J(\mathcal F, F, s)$ in (3.169) in \cite{GN16} is bounded by a constant multiple of our $\mathscr J_N(s, \bar \delta_N)$ (using also Lemma 3.5.3a in \cite{GN16}). With these preparations, we can now apply Theorem 3.5.4 in \cite{GN16} where for our choice of envelope $F_N$ we can take $\|U\|_{L^2(P)}$ in that theorem bounded by a constant multiple of $\sqrt {\log N} \bar \delta_N$ (using independence of $X,\varepsilon$ and also Lemma 2.3.3 in \cite{GN16}). The upper bound (3.171) in \cite{GN16} then implies that $$E\sup_{\theta \in \Theta_N}\big|\sum_{i=1}^N f_\theta(Z_i)\big| \lesssim \sqrt N \max\left[\bar \delta_N \mathscr J_N(\sigma_N/\bar \delta_N, \bar \delta_N), \frac{\sqrt {\log N} \bar \delta_N^3  \mathscr J^2_N(\sigma_N/\bar \delta_N, \bar \delta_N)}{\sqrt N \sigma_N^2} \right]$$ which in turn,  using the substitution $\bar \delta_N \epsilon = \rho$ in (\ref{kolmodude}), is bounded by a constant multiple of the maximum of the second and third terms appearing in (\ref{mbdc}). Hence the remainder terms from (\ref{remgauss}) converge to zero in expectation, and then also in probability (by Markov's inequality). [Let us finally note that, strictly speaking, the application of Theorem 3.5.4 in \cite{GN16} requires $0 \in \mathcal F$ and $\mathcal F$ countable: If $\|\theta_0\|_{\mathcal R} < M_N$ then $g_\theta=0$ for $\theta=\theta_0 -(t/\sqrt N)\tilde \psi \in \Theta_N$ and $N$ large enough, so $0 \in \mathcal F$. Otherwise we can recenter $f_\theta$ at $f_{\theta_*}$ for some arbitrary $\theta_*$ and use a standard (one-dimensional) moment bound for $E|\sum_{i=1}^Nf_{\theta_*}(Z_i)| \le \sqrt N \sigma_N \to 0$. One then applies the previous argument to the class $\mathcal F - f_{\theta_*}$, so that the same overall bound holds true also in this case. Finally, by continuity of $\theta \mapsto g_{\theta,j}$ on the totally bounded set $\Theta_N$, the supremum of the empirical process can be realised over a countable dense subset of $\Theta_N$, so the assumption that $\mathcal F$ be countable can be met, too.]  
\end{proof}

\begin{Lemma} \label{rememp}
We have for any $t \in \mathbb R$ that $$\sup_{\theta \in \Theta_N} \Big|\sum_{i=1}^N \big( \|\mathscr G(\theta_0)(X_i) - \mathscr G(\theta_{(t)})(X_i)\|_V^2 - E^X \|\mathscr G(\theta_0)(X_i) - \mathscr G(\theta)(X_i)\|_V^2  \Big)\Big| = o_{P_{\theta_0}^N}(1)$$
\end{Lemma}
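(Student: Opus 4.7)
The plan is to apply the empirical process moment inequality Theorem 3.5.4 in \cite{GN16} to the real-valued function class
$$\mathcal F_N := \bigl\{h_\theta : \theta \in \Theta_N\bigr\}, \qquad h_\theta(x) := \|\mathscr G(\theta_0)(x) - \mathscr G(\theta_{(t)})(x)\|_V^2,$$
for fixed $t \in \mathbb R$, in direct parallel to the proof of Lemma \ref{remgp}, but with the simplification that here no Gaussian weights $\varepsilon_i$ appear and all the randomness lives in $(X_i) \sim \lambda^{\otimes N}$. The sum in the lemma is the centered empirical process $\sum_i (h_\theta(X_i) - E h_\theta)$ indexed by $\theta \in \Theta_N$; the statement as printed has $\theta$ in place of $\theta_{(t)}$ in one of the two appearances, but this is the $t=0$ specialisation of the same argument, so one uniform estimate over $\mathcal F_N$ covers both.

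First I would extract the three quantitative inputs required by that theorem. Condition \ref{blip}, together with the definition of $\Theta_N$ in (\ref{keyset}) and $\|\tilde\psi\|_\infty<\infty$, yields $\|\theta_{(t)}-\theta_0\|_\infty \lesssim \bar\delta_N$ uniformly over $\Theta_N$, hence the constant envelope $F \equiv c\bar\delta_N^2$. The weak variance is controlled by
$$\sup_{\theta\in\Theta_N} E h_\theta^2 \;\leq\; \sup_{\theta\in\Theta_N}\bigl(\|h_\theta\|_\infty \cdot E h_\theta\bigr) \;\lesssim\; \bar\delta_N^2 \cdot \|\mathscr G(\theta_{(t)})-\mathscr G(\theta_0)\|_{L^2_\lambda}^2 \;\lesssim\; \bar\delta_N^4,$$
using the $L^2_\lambda$-Lipschitz branch of Condition \ref{blip} in the last step. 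For the entropy input, the identity $\|a\|_V^2 - \|b\|_V^2 = \langle a-b,a+b\rangle_V$ combined with both branches of Condition \ref{blip} produces the pointwise Lipschitz estimate
$$|h_\theta(x)-h_{\theta'}(x)| \;\leq\; \bigl(\|g_\theta(x)\|_V+\|g_{\theta'}(x)\|_V\bigr)\,\|g_\theta(x)-g_{\theta'}(x)\|_V \;\lesssim\; \bar\delta_N\,\|\theta-\theta'\|_\infty,$$
with $g_\theta := \mathscr G(\theta_0)-\mathscr G(\theta_{(t)})$. Consequently any $\|\cdot\|_\infty$-cover of $\Theta_N$ at scale $\varepsilon'$ induces an $L^2(Q)$-cover of $\mathcal F_N$ at scale $O(\bar\delta_N\varepsilon')$, uniformly in finitely supported probability measures $Q$. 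After the change of variables $\bar\delta_N\varepsilon'=\rho$ in (\ref{kolmodude}) and a constant adjustment, the entropy integral appearing in Theorem 3.5.4 in \cite{GN16} is dominated by a constant multiple of $\mathscr J_N(1,\bar\delta_N^2)$ (using the monotonicity $t\mapsto \mathscr J_N(1,t)$ in the scale).

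Plugging these inputs into the variance-localised moment bound of Theorem 3.5.4 in \cite{GN16} yields
$$E\sup_{\theta\in\Theta_N}\Bigl|\sum_{i=1}^N\bigl(h_\theta(X_i)-E h_\theta\bigr)\Bigr| \;\lesssim\; \sqrt N\,\bar\delta_N^2\,\mathscr J_N(1,\bar\delta_N^2) \;+\; \bar\delta_N^2\,\mathscr J_N^2(1,\bar\delta_N^2).$$
The leading term is $o(1)$ by hypothesis (\ref{simple}); the second is strictly lower order under the standing assumption $\sqrt N \bar\delta_N \to \infty$ from Condition \ref{DN}, as it can be written $(\sqrt N\bar\delta_N^2\mathscr J_N)\cdot(\mathscr J_N/\sqrt N)$ with the first factor $o(1)$ and the second bounded (indeed $o(1)$ in the polynomial-entropy Hölder setting underlying (\ref{trirat})). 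Markov's inequality then converts this moment estimate into the required $P_{\theta_0}^N$-probability statement. The countability and measurability caveats, and the recentering at $f_{\theta_*}$ that ensures $0\in\mathcal F_N$, are handled verbatim as in the bracketed coda at the end of the proof of Lemma \ref{remgp}. The only real obstacle I anticipate is bookkeeping: keeping track of the correct powers of $\bar\delta_N$ in the envelope-normalised $L^2(Q)$ covering numbers, which is subtly different here because $F$ is a constant rather than the $|e|\bar\delta_N$ envelope of the previous lemma, and this is precisely what replaces $\bar\delta_N$ by $\bar\delta_N^2$ in the argument of $\mathscr J_N$.
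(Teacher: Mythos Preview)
Your proposal is correct and follows essentially the same route as the paper: apply the empirical-process moment bound from \cite{GN16} to the class $\{\|\mathscr G(\theta_0)(\cdot)-\mathscr G(\theta_{(t)})(\cdot)\|_V^2:\theta\in\Theta_N\}$ with constant envelope $\lesssim\bar\delta_N^2$, transfer the $\|\cdot\|_\infty$-entropy of $\Theta_N$ to the uniform $L^2(Q)$-entropy via the Lipschitz estimate, and conclude from (\ref{simple}) and Markov. The only cosmetic difference is that the paper invokes Remark~3.5.5 after Theorem~3.5.4 in \cite{GN16} (the bounded-envelope simplification), which delivers the single term $\sqrt N\bar\delta_N^2\mathscr J_N(1,\bar\delta_N^2)$ directly and spares you the separate treatment of the Poissonian term $\bar\delta_N^2\mathscr J_N^2(1,\bar\delta_N^2)$; your handling of that extra term is fine in the present polynomial-entropy regime but is simply not needed.
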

\begin{proof}
We will obtain a bound for the supremum of the empirical process $\{\sum_{i=1}^N (f(X_i)-Ef(X_i)): f \in \mathcal F\}$, this time with indexing class $$\mathcal F = \{f_\theta=\|\mathscr G(\theta_0)(\cdot) - \mathscr G(\theta_{(t)})(\cdot)\|_V^2: \theta \in \Theta_N\}.$$ Using Condition \ref{blip}, the envelopes of $\mathcal F$ can be taken to be 
\begin{align*}
\sup_{\theta \in \Theta_N} \|\mathscr G(\theta_0) - \mathscr G(\theta_{(t)})\|_\infty^2 & \lesssim \sup_{\theta \in \Theta_N} \|\theta_0 - \theta_{(t)}\|_\infty^2 \lesssim \bar \delta_N^2 \equiv F,
\end{align*}
and we also have, since $\|\mathscr G(\theta)\|_\infty \le C$ by Condition \ref{blip}, that $$\|f_\theta - f_{\theta'}\|_\infty \lesssim \|\theta-\theta'\|_\infty \quad \forall \theta, \theta' \in \Theta_N.$$ This implies, similar to the proof in the previous lemma, that a $c_0 \bar \delta_N^2 \epsilon$-covering of $\Theta_N$ for the $\|\cdot\|_\infty$-norm (and $c_0$ a small but fixed constant) induces a $\|F\|_{L^2(Q)}\epsilon$-covering of $\mathcal F$ for the $L^2(Q)$-norm ($Q$ any probability measure), and that the functional $J(\mathcal F, F, s)$ in (3.169) in \cite{GN16} is bounded by a constant multiple of our $\mathscr J(s, \bar\delta_N^2)$. The convergence to zero required in the lemma now follows from Theorem 3.5.4 in \cite{GN16}, in fact Remark 3.5.5 after it, the requirement (\ref{simple}) from Condition \ref{quant}, and Markov's inequality.
\end{proof}

\subsection{Gaussian change of variables}

We now control the ratio of Gaussian integrals appearing in Proposition \ref{lanlap}.

\begin{Proposition} \label{cameron} As $N \to \infty$ we have for any fixed $t \in \mathbb R$ that
$$ \frac{\int_{\bar \Theta_N}  e^{\ell_N(\theta_{(t)})}d\Pi(\theta)}{\int_{\bar \Theta_N} e^{\ell_N(\theta)}d\Pi(\theta)}  \to^{P_{\theta_0}^N} 1.$$
\end{Proposition}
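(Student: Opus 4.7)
My plan is to apply the Cameron--Martin formula to the Gaussian prior $\Pi$, implementing a deterministic shift of the integration variable by $h_N := (t/\sqrt N)\,\tilde\psi_{\theta_0}$. Since Condition \ref{quant} places $\tilde\psi \in \mathcal H_N$, we have $h_N \in \mathcal H_N$, and Cameron--Martin yields the exact identity
\begin{equation*}
\int_{\bar\Theta_N} e^{\ell_N(\theta - h_N)}\,d\Pi(\theta) = \int_{\bar\Theta_N - h_N} e^{\ell_N(\theta) - \hat h_N(\theta) - \frac{1}{2}\|h_N\|^2_{\mathcal H_N}}\,d\Pi(\theta),
\end{equation*}
where $\hat h_N: \Theta \to \mathbb R$ is the Paley--Wiener (measurable) linear extension of $\langle \cdot, h_N\rangle_{\mathcal H_N}$, distributed as $N(0, \|h_N\|^2_{\mathcal H_N})$ under $\Pi$.

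I would next show that the exponential correction factor is $1+o(1)$ uniformly on $\bar\Theta_N - h_N$. The deterministic piece $\|h_N\|^2_{\mathcal H_N}/2 = (t^2/2N)\|\tilde\psi\|^2_{\mathcal H_N}$ is $o(1)$: \eqref{rkhsbds} gives $\delta_N\|\tilde\psi\|_{\mathcal H_N} \to 0$, and Condition \ref{prior} forces $N\delta_N^2 \to \infty$, so $\|\tilde\psi\|^2_{\mathcal H_N}/N = o(1/(N\delta_N^2)) \to 0$. For $\theta \in \bar\Theta_N - h_N$, writing $\theta = \theta' - h_N$ with $\theta' \in \bar\Theta_N \subset T_N^c$ and using linearity of $\hat h_N$,
\begin{equation*}
|\hat h_N(\theta)| \le |\hat h_N(\theta')| + \|h_N\|^2_{\mathcal H_N} \le |t|\sqrt{2L+1}\,\delta_N \|\tilde\psi\|_{\mathcal H_N} + (t^2/N)\|\tilde\psi\|^2_{\mathcal H_N},
\end{equation*}
which is $o(1)$ by the same inputs together with the defining bound of $T_N^c$. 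Hence
\begin{equation*}
\int_{\bar\Theta_N} e^{\ell_N(\theta - h_N)}\,d\Pi(\theta) = (1+o(1)) \int_{\bar\Theta_N - h_N} e^{\ell_N(\theta)}\,d\Pi(\theta).
\end{equation*}

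The remaining step is to compare $\int_{\bar\Theta_N - h_N} e^{\ell_N}\,d\Pi$ with $\int_{\bar\Theta_N} e^{\ell_N}\,d\Pi$. Normalising by the posterior denominator $\int_\Theta e^{\ell_N}\,d\Pi$, this reduces to showing $\Pi(\bar\Theta_N - h_N \mid D_N) = 1 - o_{P_{\theta_0}^N}(1)$, which combined with \eqref{key2} closes the argument. The key observation is that $h_N$ is small in every relevant norm: $\|h_N\|_\infty = O(N^{-1/2}) = o(\bar\delta_N)$ (using $\sqrt N\,\bar\delta_N \to \infty$), $\|h_N\|_{\mathcal R} \to 0$ while $M_N \ge 1$, and $|\widehat{\tilde\psi}(h_N)| = (|t|/\sqrt N)\|\tilde\psi\|^2_{\mathcal H_N} = o(\sqrt N\,\delta_N\|\tilde\psi\|_{\mathcal H_N})$. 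Consequently $(\bar\Theta_N - h_N)^c$ sits inside an only marginally relaxed version of $\bar\Theta_N^c$, and reapplying the posterior tail estimates underlying \eqref{key2} (Condition \ref{DN} for the $\Theta_N$ constraints, and Lemma \ref{oldsong} for the $T_N$ constraint with Gaussian tail threshold still exceeding $\sqrt{2L}$ after the shift) yields the required control.

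The main obstacle is precisely this last step: verifying that a small deterministic translation of the localisation set $\bar\Theta_N$ does not disturb its near-full posterior mass. The bookkeeping goes through because the scale of $h_N$, namely $1/\sqrt N$, is strictly smaller than both $\bar\delta_N$ and $\sqrt N\,\delta_N\|\tilde\psi\|_{\mathcal H_N}$, the two effective radii defining $\bar\Theta_N$.
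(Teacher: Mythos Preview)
Your proposal is correct and follows essentially the same route as the paper: apply the Cameron--Martin shift by $h_N=(t/\sqrt N)\tilde\psi$, bound the exponential correction via the localisation $T_N^c$ together with $\delta_N\|\tilde\psi\|_{\mathcal H_N}\to 0$, and reduce to showing $\Pi(\bar\Theta_N-h_N\mid D_N)\to 1$ using that $h_N$ is negligible in the $\|\cdot\|_\infty$, $\|\cdot\|_{\mathcal R}$ and $\langle\cdot,\tilde\psi\rangle_{\mathcal H_N}$ directions. The paper's proof is the same argument with the shifted set written as $\bar\Theta_{N,t}=\{\theta_{(t)}:\theta\in\bar\Theta_N\}$, which is exactly your $\bar\Theta_N-h_N$.
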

\begin{proof}
If we denote by $\Pi_t$ the Gaussian law of $\theta_{(t)}=\theta - (t/\sqrt N) \tilde \psi$, then the Cameron-Martin theorem (e.g., Theorem 2.6.13 in \cite{GN16}) provides the formula for the Radon-Nikodym density of $$\frac{d\Pi_t}{d\Pi}(\theta) =\exp\Big\{\frac{t}{\sqrt N} \langle \theta, \tilde \psi \rangle_{\mathcal H_N} - \frac{t^2}{2N} \|\tilde \psi\|_{\mathcal H_N}^2 \Big\}, \quad \theta \sim \Pi, \quad \tilde \psi \in \mathcal H_N.$$  The ratio in the proposition thus equals
$$ \frac{e^{ - \frac{t^2}{2N} \|\tilde \psi\|_{\mathcal H_N}^2} \int_{\bar \Theta_{N,t}}  e^{\ell_N(\vartheta)} e^{\frac{t}{\sqrt N} \langle \vartheta, \tilde \psi \rangle_{\mathcal H_N}} d\Pi(\vartheta)}{\int_{\bar \Theta_N} e^{\ell_N(\theta)}d\Pi(\theta)},\quad\text{ where }\quad \bar \Theta_{N,t}=\{\theta_{(t)}: \theta \in \bar \Theta_N\}.$$
Uniformly in $\theta \in T_N^c \subset \bar \Theta_N$ from (\ref{key2}) we have as $N \to \infty$ that $|(t/\sqrt N) \langle \theta, \tilde \psi \rangle_{\mathcal H_N}| \lesssim \delta_N \|\tilde \psi\|_{\mathcal H_N} \to 0$ by the requirement (\ref{rkhsbds}) in Condition \ref{quant}, which also implies that $(t^2/ N) \|\tilde \psi\|^2_{\mathcal H_N}=o(1)$ since $1/\sqrt N = o(\delta_N)$. Now since
$$\frac{|t|}{\sqrt N}\sup_{\vartheta \in \bar \Theta_{N,t}} |\langle \vartheta, \tilde \psi \rangle_{\mathcal H_N}| \le \frac{|t|}{\sqrt N}\sup_{\theta \in T_N^c} |\langle \theta, \tilde \psi \rangle_{\mathcal H_N}| + \frac{t^2}{N}\|\tilde \psi\|_{\mathcal H_N}^2 $$ we deduce from what precedes that the last ratio of integrals equals
$$ e^{o(1)} \times \frac{\int_{\bar \Theta_{N,t}}  e^{\ell_N(\vartheta)} d\Pi(\vartheta)}{\int_{\bar \Theta_N} e^{\ell_N(\theta)}d\Pi(\theta)} = e^{o(1)} \times \frac{\Pi(\bar \Theta_{N,t}|D_N)}{\Pi(\bar \Theta_{N}|D_N)}.$$
The denominator converges to 1 in $P_{\theta_0}^N$-probability by (\ref{key2}), and so does then the numerator, using again (\ref{key2}) and that $t\|\tilde \psi\|_\infty/\sqrt N = o(\bar \delta_N)$ and  $t\|\tilde \psi\|_\mathcal R/\sqrt N = o(M_N)$ under the maintained assumptions. 
\end{proof}

Combining Propositions \ref{lanlap} and \ref{cameron} we have shown that  for all $t \in \mathbb R$, as $N \to \infty$,
\begin{equation}\label{mgfpre}
E^{\Pi^{\bar \Theta_N}} \big[\exp\{t \sqrt N \big(\langle \theta, \psi \rangle_{L_\zeta^2(\mathcal Z)} - \hat \Psi_N \big)\} | D_N \big] \to \exp \Big\{\frac{t^2}{2} \|\mathbb I_{\theta_0} \tilde \psi\|_{L_\lambda^2(\mathcal X)}^2 \Big\}
\end{equation}
in $P^N_{\theta_0}$-probability, and therefore, using also (\ref{constv}), for $\theta \sim \Pi(\cdot|D_N)$,
\begin{equation} \label{prelim}
\sqrt N  \big(\langle \theta|D_N, \psi \rangle_{L_\zeta^2(\mathcal Z)} - \hat \Psi_N \big) \to^d N(0, \|\mathbb I_{\theta_0} \tilde \psi\|_{L^2_\lambda(\mathcal X)}^2)
\end{equation}
 by the in $P_{\theta_0}^N$-probability version of the usual implication that convergence of Laplace transforms implies convergence in distribution (see the appendices of \cite{N1} or \cite{CR15}). This completes the proof of Theorem \ref{main}.
  
 \subsection{Convergence of the posterior mean}\label{tripos}
 
 The proof combines ideas from \cite{CN13, N1, MNP, MNP19}. The key lemma is the following stochastic bound on the posterior second moments.
 
 \begin{Lemma}\label{UIdom}
 Under the hypotheses of Theorem \ref{mainmean} we have
 $$NE^\Pi\big[ (\langle \theta, \psi \rangle_{L_\zeta^2(\mathcal Z)} - \hat \Psi_N )^2 |D_N\big] = O_{P_{\theta_0}^N}(1)$$
 \end{Lemma}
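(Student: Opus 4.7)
The strategy is to split the posterior second moment across $\bar \Theta_N$ and its complement, using on the good set the Laplace-transform convergence already established in (\ref{mgfpre}), and on the complement a Cauchy--Schwarz step against the very small posterior tail mass. Write
\begin{align*}
N E^\Pi\!\left[(\langle\theta,\psi\rangle_{L^2_\zeta(\mathcal Z)} - \hat\Psi_N)^2 | D_N\right] &= N E^\Pi[\mathbf 1_{\bar \Theta_N}(\langle\theta,\psi\rangle - \hat\Psi_N)^2 | D_N] \\
&\quad + N E^\Pi[\mathbf 1_{\bar \Theta_N^c}(\langle\theta,\psi\rangle - \hat\Psi_N)^2 | D_N].
\end{align*}
For the first summand, since $\mathbf 1_{\bar\Theta_N}/\Pi(\bar\Theta_N|D_N)$ is the Radon--Nikodym derivative of $\Pi^{\bar\Theta_N}(\cdot|D_N)$ with respect to $\Pi(\cdot|D_N)$ and $\Pi(\bar\Theta_N|D_N)\le 1$, the first summand is bounded by $N E^{\Pi^{\bar \Theta_N}}[(\langle\theta,\psi\rangle-\hat\Psi_N)^2 | D_N]$. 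Applying the elementary inequality $x^2 \le t_0^{-2}(e^{t_0 x}+e^{-t_0 x})$ for any fixed $t_0 \neq 0$ and invoking (\ref{mgfpre}) at $\pm t_0$ (which provides a deterministic finite limit) shows this bound is $O_{P^N_{\theta_0}}(1)$.

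For the tail summand, Cauchy--Schwarz in the posterior yields
$$N E^\Pi[\mathbf 1_{\bar \Theta_N^c}(\ldots)^2 | D_N] \le N \bigl(E^\Pi[(\ldots)^4 | D_N]\bigr)^{1/2}\bigl(\Pi(\bar \Theta_N^c | D_N)\bigr)^{1/2}.$$
The second factor is $O_{P^N_{\theta_0}}(e^{-(L+1)N\delta_N^2/2})$ by (\ref{key2}). For the first, expand via $(a-b)^4 \le 8(a^4+b^4)$: the centring statistic $\hat \Psi_N$ is $O_{P^N_{\theta_0}}(1)$ by the ordinary CLT applied to (\ref{effcent}), while $|\langle\theta,\psi\rangle|\le \|\psi\|_{L^2}\|\theta\|_{L^2}$. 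The posterior fourth moment of $\|\theta\|_{L^2}$ is controlled through the ratio representation (\ref{post}): on the event $\mathcal C_N$ of Lemma \ref{oldsong} (of $P^N_{\theta_0}$-probability tending to one) the denominator is at least $e^{-LN\delta_N^2}$, while the numerator has $P^N_{\theta_0}$-expectation equal to $E^\Pi\|\theta\|_{L^2}^4 = O(1)$ by Fubini, the identity $E^N_{\theta_0}[dP^N_\theta/dP^N_{\theta_0}]=1$, and the uniform prior moment bound in Condition \ref{prior}. Hence $E^\Pi[\|\theta\|_{L^2}^4 | D_N] = O_{P^N_{\theta_0}}(e^{LN\delta_N^2})$, and combining the estimates the tail summand is $O_{P^N_{\theta_0}}(Ne^{-N\delta_N^2/2})$, which vanishes by the assumption $e^{-N\delta_N^2}N^2\to 0$ in Condition \ref{prior}.

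The main delicate point is the accounting of exponents: the posterior fourth-moment bound grows like $e^{LN\delta_N^2/2}$ after the square root, while the tail-mass bound decays like $e^{-(L+1)N\delta_N^2/2}$. The unit margin between these exponents, built into (\ref{key2}) by choosing the exponent $L+1$ rather than $L$, produces exactly the $e^{-N\delta_N^2/2}$ slack needed to absorb the factor $N$ via the prior small-ball hypothesis. Aside from this bookkeeping, the argument uses only previously established facts: the MGF limit (\ref{mgfpre}), the evidence lower bound of Lemma \ref{oldsong}, the posterior localisation (\ref{key2}), and the uniform prior $L^2$-moment control in Condition \ref{prior}.
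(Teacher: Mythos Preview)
Your proof is correct and uses the same essential ingredients as the paper's argument---the Laplace-transform limit (\ref{mgfpre}) on the localised set, and Cauchy--Schwarz against the exponentially small posterior tail together with the evidence lower bound of Lemma \ref{oldsong}---but the decomposition is organised differently. The paper first recentres at $\theta_0$ (splitting off $W_N^2$ via the CLT), then partitions over the $L^\infty$-ball $A_N=\{\|\theta-\theta_0\|_\infty\le\bar\delta_N\}$, and finally passes from $\Pi(\cdot|D_N)$ to $\Pi^{\bar\Theta_N}(\cdot|D_N)$ on $A_N$ via the total-variation bound (\ref{constv}), using the crude estimate $N\bar\delta_N^2\cdot\|\Pi-\Pi^{\bar\Theta_N}\|_{TV}$ for the resulting discrepancy. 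Your route bypasses both the recentring and the TV step by splitting directly over $\bar\Theta_N$ and using the elementary identity $E^\Pi[\mathbf 1_{\bar\Theta_N}\,\cdot\,|D_N]=\Pi(\bar\Theta_N|D_N)\,E^{\Pi^{\bar\Theta_N}}[\,\cdot\,|D_N]$, which is cleaner. Your use of $x^2\le t_0^{-2}(e^{t_0 x}+e^{-t_0 x})$ (invoking (\ref{mgfpre}) at both $\pm t_0$) is also preferable to the paper's appeal to ``$x^2\le 2e^x,\ x\ge 0$'' with $t=1$ only, since the random variable $\sqrt N\langle\theta-\theta_0,\psi\rangle$ is not sign-definite. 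The exponent bookkeeping you highlight---the unit gap between $L$ and $L+1$ delivering the $e^{-N\delta_N^2/2}$ slack to absorb the factor $N$---is exactly the mechanism at work in the paper as well.
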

 \begin{proof}
The left hand side in the last display is bounded by
$$2NE^\Pi\big[ \langle \theta - \theta_0, \psi \rangle_{L_\zeta^2(\mathcal Z)}^2|D_N] + 2 N(\hat \Psi_N - \langle \theta_0, \psi \rangle_{L_\zeta^2(\mathcal Z)})^2$$ and in view of (\ref{effcent}), the second term in the last decomposition is bounded in $P^N_{\theta_0}$-probability by the central limit theorem applied to $W_N$ from (\ref{WN}) with $h = \tilde \psi_{\theta_0}$ (one also applies the continuous mapping theorem for $x \mapsto x^2$ and Prohorov's theorem to deduce from convergence in distribution of $NW_N^2$ that it is uniformly tight.) 

It hence remains to bound the first term in the last decomposition. Define $A_N = \{\|\theta-\theta_0\|_\infty \le \bar \delta_N\} \subset \Theta$  and write the first quantity in the last display as (two times)
\begin{equation} \label{tightterms}
 N E^\Pi\big[ \langle \theta-\theta_0, \psi \rangle_{L_\zeta^2(\mathcal Z)}^2 1_{A_N}  |D_N\big] + N E^\Pi\big[ \langle \theta-\theta_0, \psi \rangle_{L_\zeta^2(\mathcal Z)}^2 1_{A^c_N}  |D_N\big] = I+II.
\end{equation}
To deal with term II, we apply the Cauchy-Schwarz inequality to obtain the bound 
\begin{equation*}
N \sqrt{E^\Pi\big[ \langle \theta -\theta_0, \psi \rangle_{L^2(\mathcal Z)}^4|D_N]} \sqrt{\Pi(\|\theta-\theta_0\|_\infty > \bar \delta_N|D_N)}
\end{equation*}
and we now show that this term is bounded in $P_{\theta_0}^N$-probability: Using Condition \ref{DN}, Lemma \ref{oldsong}, Markov's inequality and $E_{\theta_0}^N e^{\ell_N(\theta)-\ell_N(\theta_0)}=1$ we indeed have
\begin{align*}
& P_{\theta_0}^N\Big(E^\Pi\big[ \langle \theta -\theta_0, \psi \rangle^4|D_N] \Pi(\|\theta-\theta_0\|_\infty > \bar \delta_N|D_N) > N^{-2} \Big) \\
& \le P_{\theta_0}^N\Big(E^\Pi\big[ \langle \theta -\theta_0, \psi \rangle^4|D_N] e^{-(L+1)N \delta_N^2}> N^{-2} \Big) +o(1) \\
& = P_{\theta_0}^N\left(\frac{\int_\Theta \langle \theta -\theta_0, \psi \rangle^4 e^{\ell_N(\theta)-\ell_N(\theta_0)} d\Pi(\theta) }{\int_\Theta  e^{\ell_N(\theta)-\ell_N(\theta_0)} d\Pi(\theta)}>e^{(L+1)N \delta_N^2} N^{-2}, \mathcal C_N \right) +o(1) \\
& \le  \|\psi\|_{L^2(\mathcal Z)}^4 e^{-N \delta_N^2} N^2 \int_\Theta \|\theta-\theta_0\|_{L^2(\mathcal Z)}^4E_{\theta_0}^N e^{\ell_N(\theta)-\ell_N(\theta_0)} d\Pi(\theta)  + o(1) \\
& \lesssim N^2 e^{-N\delta_N^2} +o(1) \to 0
\end{align*}
as $N \to \infty$, by hypothesis on $\delta_N, \Pi_N$. Collecting what precedes implies that the term $II$ in (\ref{tightterms}) is indeed $O_{P_{\theta_0}}^N(1)$. 

The next step is to bound the term $I$ in (\ref{tightterms}). Recalling that $\Pi^{\bar \Theta_N}[\cdot|D_N]$ denotes the posterior distribution arising from prior restricted and renormalised to $\bar \Theta_N$, we decompose
\begin{align*}
&N E^{\Pi}\big[ \langle \theta-\theta_0, \psi \rangle_{L^2(\mathcal Z)}^2 1_{A_N}|D_N\big]  =  N E^{\Pi^{\bar \Theta_N}}\big[ \langle \theta-\theta_0, \psi \rangle_{L^2(\mathcal Z)}^2 1_{A_N} |D_N\big] \\
&~~~~~+ N E^{\Pi}\big[ \langle \theta-\theta_0, \psi \rangle_{L^2(\mathcal Z)}^2 1_{A_N} |D_N\big]  - N E^{\Pi^{\bar \Theta_N}}\big[ \langle \theta-\theta_0, \psi \rangle_{L^2(\mathcal Z)}^2 1_{A_N} |D_N\big] = A+B.
\end{align*}
For term $A$, using $x^2 \le 2e^x, x \ge 0$, the definition of $\hat \Psi_N$ from (\ref{effcent}) and $W_N=O_{P_{\theta_0}^N}(1)$ with $h = \tilde \psi_{\theta_0}$ from (\ref{WN}), the limit (\ref{mgfpre}) at $t=1$ implies that for all $N$ large enough and some $r_N=o_{P_{\theta_0}^N}(1)$,
$$A  \le  2e^{W_N+r_N} e^{\frac{1}{2} \|\mathbb I_{\theta_0} \tilde \psi\|_{L^2(\mathcal X, V)}^2},$$ and hence this term is stochastically bounded.

Finally, by definition of the events $A_N$, the term $|B|$ can be written as
\begin{align*}
&N \Big|\int_{A_N} \langle \theta-\theta_0, \psi \rangle_{L^2(\mathcal Z)}^2 [d\Pi(\theta|D_N) - d\Pi^{\bar \Theta_N}(\theta|D_N)] \Big|\\
& \le N \bar \delta^2_N \|\psi\|^2_{L^1(\mathcal Z)} \|\Pi(\cdot|D_N) - \Pi^{\bar \Theta_N}(\cdot|D_N)\|_{TV} \\
& \lesssim N \bar \delta^2_N \Pi(\bar\Theta^c_N|D_N) \lesssim N \bar \delta^2_N O_{P_{\theta_0}^N}(e^{-(L+1)N \delta_N^2}) = o_{P_{\theta_0}^N}(1),
\end{align*}
where we have used (\ref{constv}) and (\ref{key2}), completing the proof of the lemma.
\end{proof}
 
Now to prove the theorem note that by (\ref{prelim}) and (\ref{weakprob}) we have for $$Z_n|D_N \equiv \sqrt N  (\langle \theta, \psi \rangle_{L^2_\zeta(\mathcal Z)}- \hat \Psi_N) |D_N, \quad Z \sim N(0, \|\mathbb I_{\theta_0} \tilde \psi\|_{L^2_\lambda(\mathcal X)}^2)$$ and $d_{weak}$ any metric for weak convergence of laws $\mathcal L(\cdot)$ on $\mathbb R$,
\begin{equation}\label{wlim}
d_{weak}\left( \mathcal L\big(Z_N |D_N\big), \mathcal L(Z)) \right) \to_{N \to \infty}^{P_{\theta_0}^N} 0.
\end{equation}
The idea of the proof of follow is that the previous lemma implies (by uniform integrability) convergence of moments in the last limit (\ref{wlim}), and thus that, since $EZ=0$, the posterior mean equals $\hat \Psi_N$ up to a stochastic term of order $o(1/\sqrt N)$. However, as the probability measures $\mathcal L(Z_N|D_N)$ to which this argument is applied are \textit{random} via the data $D_N$, the proof requires some care. We will employ a contradiction argument: To prove Theorem \ref{mainmean}, it suffices by Theorem \ref{main}, Slutsky's lemma and (\ref{WN}) with $h= \tilde \psi_{\theta_0}$ to prove that as $N \to \infty$,
\begin{equation}\label{whatwewant}
\sqrt N\big(\langle E^\Pi[\theta|D_N], \psi \rangle_{L^2_\zeta(\mathcal Z)}-\hat \Psi_N \big) \to^{\Pr} 0,
\end{equation}
 where we write $\Pr$ for the probability measure $P^\mathbb N_{\theta_0}$ on the underlying measurable space $(\Omega, \mathcal S):=((V \times \mathcal X)^\mathbb N, \mathcal S)$ supporting all data variables $(D_N, N \in \mathbb N)$. Suppose the last limit does not hold true. Then there exists $\Omega' \in \mathcal S$ of positive probability $\Pr(\Omega')>\tau$ and $\zeta'>0$ such that along a subsequence of $N$ (still denoted by $N$) we have 
\begin{equation}\label{conti}
|\sqrt N\big(\langle E^\Pi[\theta|D_N(\omega)], \psi \rangle_{L_\zeta^2(\mathcal Z)}-\hat \Psi_N(\omega) \big)| \ge \zeta' >0 \quad\text{for } \omega \in \Omega'. 
\end{equation}
Now since convergence in $\Pr$-probability implies $\Pr$-almost sure convergence along a subsequence, we can extract a further subsequence of $N$ such that (\ref{wlim}) holds almost surely, that is, on an event $\Omega_0 \subset \Omega$ such that $\Pr(\Omega_0)=1$. For each fixed $\omega \in \Omega_0$ we can use the Skorohod imbedding (Theorem 11.7.2 in \cite{D02}) to construct (if necessary on a new probability space) new real random variables $\tilde Z_N, \tilde Z$ such that their laws satisfy $$\mathcal L(\tilde Z_N) = \mathcal L \big(Z_N|D_N(\omega) \big), \quad \mathcal L(\tilde Z)= \mathcal L(Z), \quad \tilde Z_N \to_{N \to \infty}^{a.s.} \tilde Z,$$ and we also know by Lemma \ref{UIdom} that $E\tilde Z_N^2=E[Z_N^2|D_N(\omega)]=O(1)$ for all $\omega \in \Omega_0' \subset \Omega_0$ of probability $\Pr(\Omega_0')>1-\tau$ as close to one as desired. But this implies that the $
(\tilde Z_N: N \in \mathbb N)$ are uniformly integrable real random variables so that almost sure convergence implies convergence of  first moments (\cite{D02}, Theorem 10.3.6), that is $$E|Z_n|D_N(\omega) - Z| =E|\tilde Z_N - \tilde Z| \to_{N \to \infty} 0$$ for all $\omega \in \Omega_0'$. In particular then, using also Fubini's theorem,
\begin{equation}\label{momlim}
\sqrt N  \big(\langle E^\Pi [\theta|D_N(\omega)], \psi \rangle_{L^2_\zeta} - \hat \Psi_N(\omega) \big) =E^\Pi \big[\sqrt N  \big(\langle \theta, \psi \rangle - \hat \Psi_N \big) |D_N(\omega)\big] \to EZ=0
\end{equation} 
for $\omega \in \Omega_0'$. But if the last limit holds for all $\omega \in \Omega_0'$ with probability $\Pr(\Omega_0')>1-\tau$ we have a contradiction to (\ref{conti}) (as then $\Pr(\Omega) \ge \Pr(\Omega') + \Pr(\Omega_0') > 1 -\tau +\tau =1$), completing the proof of (\ref{whatwewant}) and thus of the theorem.

\section{Proofs for non-Abelian $X$-ray and Schr\"odinger equation}\label{examples}

The proofs proceed by verifying the hypotheses of Theorems \ref{main} and \ref{mainmean}.

\subsection{Proof of Theorem \ref{mainschrott}}

We follow ideas laid out in \cite{N1} for a more restrictive class of priors and a simpler noise model. In particular in our setting $\Theta$ is unbounded and we therefore need to explicitly track the growth of various constants in the PDE estimates used in \cite{N1}. These have been obtained in the recent article \cite{NvdGW18} in the study of a related problem, and we will refer repeatedly to \cite{NvdGW18} in the proofs that follow.

\smallskip

A key role is played by the linear $L^2(\mathcal X)$-self-adjoint `inverse Schr\"odinger' integral operator $\mathbb V_f, f>0$ smooth, furnishing unique solutions $u_{f,\psi}=\mathbb V_f[\psi]$ of the PDE 
\begin{equation}\label{schrottiso}
\mathbb S_f(u_{f,\psi}) = \psi\quad  \text{ on } \mathcal X, \quad\text{ s.t. }\quad u_{f,\psi}=0\quad \text{ on } \partial \mathcal X, \quad\text{ for all }\psi \in C(\mathcal X),
\end{equation}
where we recall the Schr\"odinger operator $\mathbb S_f(h) = \frac{\Delta}{2}h - fh$. We also have for $\psi \in C^2_0(\mathcal X):= C^2(\mathcal X) \cap \{f_{|\partial \mathcal X}=0\}$ that 
\begin{equation}\label{schrottiso2}
\mathbb V_f[\mathbb S_f[\psi]] = \psi \quad \text{ on } \mathcal X.
\end{equation}
See Chapter 3 in \cite{CZ95} (or also Proposition 22 in \cite{N1}) for these facts. We will also repeatedly use below that the linear operator $\mathbb V_f$ is Lipschitz-continuous on $L^p_\lambda(\mathcal X)$ for $p=2,\infty$, with Lipschitz constant independent of $f$, see e.g., Lemma 25 in \cite{NvdGW18} for a proof.

\smallskip

\textbf{Condition \ref{blip}}: Let us write $\theta=\phi^{-1}\circ f, \theta'=\phi^{-1}\circ h$ for $\theta, \theta' \in \Theta$ so that $$\mathscr G(\theta) - \mathscr G(\theta') = u_{f}-u_h = \mathbb V_f[(f-h)u_h].$$ Using $L^p$-continuity of $\mathbb V_f$ and that composition with regular link functions is Lipschitz for $L^p$-norms (Lemma 29 in \cite{NvdGW18}), 
\begin{equation}\label{forwardest}
\|\mathbb V_f[(f-h)u_h]\| \lesssim \|u_h\|_\infty \|f-h\| \lesssim \|\theta - \theta'\|
\end{equation} 
both for $\|\cdot\|$ equal to the $L^2(\mathcal X)$ and the $L^\infty(\mathcal X)$-norm, and with constants independent of $f$. Here we have used also that 
\begin{equation}\label{forwardbd}
\|u_h\|_\infty \le c\|g\|_\infty, \quad 0 \le h \in C^\beta,
\end{equation}
 for a fixed constant $c>0$, as follows, e.g., from the Feynman-Kac representation of $u_h$ (see (5.35) in \cite{NvdGW18}). Then (\ref{forwardbd}) also implies the first inequality in Condition \ref{blip}.

\smallskip

\textbf{Conditions \ref{quadratic} and \ref{infokey}}: If $f_0=\phi(\theta_0), f_h=\phi(\theta_0+h)$, then Proposition 4 in \cite{N1} and again regularity of the link function $\phi$ imply, for $\mathbb V_f$ the inverse Schr\"odinger operator,
$$\|u_{f_h} - u_{f_0} - \mathbb V_{f_0}[u_{f_0}(f_h-f_0)]\|_{L^2(\mathcal X)} =O(\|f_h -f_0\|_\infty^2) = O(\|h\|_\infty^2).$$ Then by the chain rule for  $\phi \circ \theta$ and continuity of the operator $\mathbb V_{f_0}$ on $C(\mathcal X)$,
\begin{equation}\label{quadschrott}
\|u_{f_h} - u_{f_0} - \mathbb V_{f_0}[u_{f_0} \phi' (\theta_0)h]\|_{L^2(\mathcal X)} =O(\|h\|_\infty^2)
\end{equation}
which shows that the linearised `score' operator $\mathbb I_{\theta_0}: L^2(\mathcal X) \to L^2(\mathcal X)$ equals 
\begin{equation}\label{score}
\mathbb I_{\theta_0} = \mathbb V_{f_0}[u_{f_0}\phi' (\theta_0) \cdot] \quad\text{ with adjoint }\quad  \mathbb I_{\theta_0}^*=u_{f_0}\phi' (\theta_0)  \mathbb V_{f_0}[\cdot].
\end{equation}
 We see that $\mathbb I_{\theta_0}$ is a continuous operator on both $L^2(\mathcal X)$ and $L^\infty(\mathcal X)$ since $\mathbb V_{f_0}$ is and since both $u_{f_0}$ and $\phi'(\theta_0)$ are bounded functions. Now as in Section 4.2 in \cite{N1} we can define
\begin{equation}\label{invopod}
\tilde \psi \equiv (\mathbb I_{\theta_0}^* \mathbb I_{\theta_0})^{-1}(\psi) \equiv \frac{\mathbb S_{f_0} \mathbb S_{f_0} \big[\frac{\psi}{u_{f_0}\phi'(\theta_0)}\big]}{u_{f_0}\phi'(\theta_0)},~~~ \psi \in C^\infty(\mathcal X),
\end{equation}
where we note that $\min(u_{f_0}, \phi' (\theta_0)))>0$ throughout $\mathcal X$ by $g \ge g_{min}$ and the Feynman-Kac formula (cf.~(5.36) in \cite{NvdGW18}) and since $\theta_0 \in C^\infty(\mathcal X)$ is bounded. Moreover since $f_0$ is smooth by assumption we also have $u_{f_0} \in C^\infty(\mathcal X)$ (as in Lemma 27 in \cite{NvdGW18}, for instance). Then, for all $\psi \in C^{\infty,2}(\mathcal X)$ one checks directly from the definitions and the product rule that $\mathbb S_{f_0} \big[\psi/(u_{f_0}\phi'(\theta_0))]\in C^2_0(\mathcal X)$.  We can thus apply (\ref{schrottiso2}) to obtain  $$\mathbb I_{\theta_0} \tilde \psi =  \mathbb V_{f_0} \Big[\mathbb S_{f_0} \mathbb S_{f_0} \big[\frac{\psi}{u_{f_0}\phi'(\theta_0)}\big] \Big] =  \mathbb S_{f_0} \big[\frac{\psi}{u_{f_0}\phi'(\theta_0)}\big]$$ and another application of (\ref{schrottiso2}) implies $\mathbb I^*_{\theta_0} \mathbb I_{\theta_0} \tilde \psi= \psi$ and hence Condition \ref{infokey}, in particular $(\mathbb I_{\theta_0}^* \mathbb I_{\theta_0})^{-1}$ is a proper inverse mapping $C^{\infty,2}(\mathcal X)$ into $C^\infty(\mathcal X)$. What precedes also explains the form of the asymptotic variance in Theorem \ref{mainschrott}.

\smallskip

\textbf{Conditions \ref{prior} and \ref{DN}}:  We will use results in \cite{GN19} for general non-linear inverse problems. Using the bounds (\ref{forwardest}) and (\ref{forwardbd}) the conditions formulated at the beginning of Section A in \cite{GN19} can be verified for the PDE arising from the Schr\"odinger equation with $\kappa=\gamma=0$. Lemma 16 in \cite{GN19} (which for $\kappa=0$ permits to replace $H^\alpha_c$ by $H^\alpha$ in its Condition 3) then verifies the lower bound for $\pi(\delta_N)$  in Condition \ref{prior} for the rescaled  prior $\Pi_N$ with RKHS $$\mathcal H_N=(H^\alpha(\mathcal X), \sqrt N \delta_N \|\cdot\|_{H^\alpha(\mathcal X)})~\text { where } \delta_N = N^{-\alpha/(2\alpha+d)}.$$ Moreover, since $E\|\theta'\|_{L^2}^4<\infty$ the moment condition is also verified. To verify Condition \ref{DN}, we will choose as regularisation space $\mathcal R = C^\beta(\mathcal X)$ equipped with the $C^\beta$-norm for any $\max(2, d/2)<\beta<\alpha-d/2$. We apply Theorem 14 in \cite{GN19} to the effect that we can find $L_0,M>0$ large enough depending on $L$ such that the set $$\tilde \Theta_N = \{\theta \in \mathcal R: \|u_{\phi(\theta)} - u_{\phi(\theta_0)}\|_{L^2} \le L_0 \delta_N;~ \|\theta\|_{C^\beta} \le M \}$$ satisfies $$\Pi(\tilde \Theta_N^c|D_N) =o_{P_{\theta_0}^N}(\eta_N), \quad\eta_N = e^{-(L+1) N \delta_N^2}.$$  We next show that for all $N$ large enough $$\tilde \Theta_N \subset \Theta_N = \{\theta \in \mathcal R: \|\theta - \theta_0\|_{\infty} \le \bar \delta_N;~ \|\theta\|_{C^\beta} \le M \}$$ and hence Condition \ref{DN}, for convergence rate $$\bar \delta_N \equiv N^{-r(\alpha)}\quad \text{ for any } r(\alpha) < \frac{\alpha}{2\alpha+d} \cdot \frac{\beta-\frac{d}{2}}{\beta+2},\quad \alpha>\beta-d/2>0.$$  Indeed, just as in Lemma 28 in \cite{NvdGW18}, using the Sobolev imbedding theorem, standard interpolation inequalities for Sobolev spaces (e.g., (5.9) in \cite{NvdGW18}) and regularity estimates for the Schr\"odinger equation (e.g., Lemma 27 in \cite{NvdGW18}), we have
\begin{align*}
\|f-f_0\|_\infty & \lesssim \|u_f - u_{f_0}\|_{C^2} \lesssim \|u_f - u_{f_0}\|_{H^{2+d/2+\epsilon}} \\
& \lesssim \|u_f - u_{f_0}\|_{L^2}^\theta \|u_f -u_{f_0}\|_{H^{\beta+2}}^{1-\theta} \\
& \lesssim \delta_N^\theta (\|f\|_{C^\beta}+\|f_0\|_{C^\beta}) =o(\bar \delta_N)
\end{align*}
where $\theta = (\beta-d/2-\epsilon)/(\beta+2)$. By our hypotheses on $\beta$ the sequence $\bar \delta_N$ converges to zero and since $f_0>f_{min}$ we then also have $\inf_{x \in \mathcal X}f(x) >f_{min}$ for all $N$ large enough. Then composition with $\phi^{-1}$ is Lipschitz on $(f_{min}, \infty)$  so that $\|\theta- \theta_0\|_\infty \lesssim \|f-f_0\|_\infty$ and we finally deduce the inclusion $\tilde \Theta_N \subset \Theta_N$ follows for all large enough $N$.

\smallskip

\textbf{Conditions \ref{prior} and \ref{quant}}: The conditions (\ref{simple}) and (\ref{mbdc}) are checked, in Subsection \ref{calcul}. The RKHS-norm of the rescaled Whittle-Mat\'ern prior from (\ref{schrottprior}) equals
\begin{equation}\label{RKHSnorm2}
\delta_N \|\tilde \psi\|_{\mathcal H_N} = \sqrt N \delta_N^2 \|\tilde \psi\|_{H^\alpha(\mathcal X)} \to 0
\end{equation}
as $N \to \infty$ since $\tilde \psi \in C^\infty(\mathcal X) \subset \mathcal R \cap H^\alpha(\mathcal X)$ (cf.~after (\ref{invopod})) and $\alpha>d/2$, verifying (\ref{rkhsbds}). 

\subsection{Proof of Theorem \ref{mainpnt}}

We again verify the general Conditions \ref{blip}-\ref{quant}.

\smallskip

\textbf{Condition \ref{blip}}: The Lipschitz estimate for $L^2$ and $L^\infty$ norms follows from Theorem 2.2 (case $k=0$) in \cite{MNP19} . The uniform boundedness of the forward map is clear since $\mathscr G(\theta)$ takes values in the compact group $SO(n)$. 

\smallskip

\textbf{Conditions \ref{quadratic} and \ref{infokey}}: The quadratic approximation for the linearisation is checked in Lemma \ref{lemma:approx} with $\rho_{\theta_0}(h) \lesssim \|h\|_\infty^2.$ For the required mapping properties of $\mathbb I_{\theta_0}$ on $L^2$ and on $L^\infty$ see Remark \ref{conxray} . Theorem \ref{theorem:inversefisherinfo} allows us to define $\tilde \psi_{\theta_0} = (\mathbb I^*_{\theta_0} \mathbb I_{\theta_0})^{-1}\psi$ which determines another element of $C^\infty(M,\mathfrak{so}(n))\subset H^\alpha(M)$.

\smallskip

\textbf{Conditions \ref{prior} and \ref{DN}}:  The verification of this condition is based on results in \cite{MNP19}, with our prior satisfying Condition 3.1 there. The lower bound for $\pi(\delta_N)$ is given in Lemmas 5.15 and 5.16 in \cite{MNP19} with $\delta_N = N^{-\alpha/(2\alpha+2)},$ and the finiteness of fourth moments of the prior is also clear. Next, it is shown in Theorem 5.19 in \cite{MNP19}, that we can take for $\mathcal R$ a $C^\beta$-H\"older-space, $M_N=M<\infty$, and for any integer $\beta'$ s.t.~$1<\beta'<\beta<\alpha-1$, 
\begin{equation}\label{bardelta}
\bar \delta_N = N^{-\frac{\alpha}{2\alpha+2} \frac{(\beta'-1)^2}{(\beta')^2}} = N^{-r(\alpha)},
\end{equation}
since the $L^\infty(M)$-rate can be bounded by the $H^{1+\epsilon}(M)$-rate (Sobolev imbedding) which in turn can be bounded by the $L^2$-rate to the power $(\beta-1-\epsilon)/\beta$ in view of the usual interpolation inequality for Sobolev norms. Also, we can choose $\eta_N$ as desired (noting that the conclusion of Theorem 5.19 in \cite{MNP19} in fact holds for any $C>0$ large enough provided $m', m''$ are large enough).

\smallskip

\textbf{Condition \ref{quant}}: The conditions (\ref{simple}) and (\ref{mbdc}) are checked in Subsection \ref{calcul}. For the prior-related conditions, we notice that the isomorphism theorem in Section \ref{sec:mainXray} implies $\tilde \psi_{\theta_0} \in C^\infty(M) \subset \mathcal R \cap H^\alpha(M)$ and so as $N \to \infty$, since $\alpha>1$,
\begin{equation}\label{RKHSnorm}
\delta_N \|\tilde \psi_{\theta_0}\|_{\mathcal H_N} = \sqrt N \delta_N^2 \|\tilde \psi_{\theta_0}\|_{H^\alpha(M)} \to 0.
\end{equation}

\subsection{About conditions (\ref{simple}) and (\ref{mbdc})}\label{calcul}

We finally check the quantitative conditions (\ref{simple}) and (\ref{mbdc}) for $\alpha-d/2> \beta>2d$ large enough -- the proofs are the same for both inverse problems and in fact only depend on the fact that $\Theta_N$ is a  subset of a $C^\beta$-ball and that its $L^\infty$-rate of contraction about $\theta_0$ is $\bar \delta_N=N^{-r(\alpha)}, r(\alpha)>0$, as well as on the quadratic approximation $\rho_{\theta_0}(h) = O(\|h\|^2_\infty)$ in Condition \ref{quadratic}: The covering numbers of a $\beta$-H\"older ball in dimension $d$ are of the order $$\log N(\Theta_N, \|\cdot\|_\infty, \epsilon) \lesssim \Big(\frac{1}{\epsilon}\Big)^{d/\beta},\quad \beta>0,$$  see (4.184) in \cite{GN16} for the case when the H\"older functions are defined over $[0,1]^d$, and this bound applies to our setting by a standard extension arguments (and regarding $M, \mathcal X$ as subsets of $[0,1]^d$, with $d=2$ in the former case). Also, by the preceding proofs we can take
$$\rho_{\theta_0}(\theta-\theta_0 + (t/\sqrt N ) \tilde \psi) \lesssim \bar \delta_N^2 \equiv \sigma_N.$$  We first note that the quantity in (\ref{simple}) is bounded by
\begin{equation}\label{simple2}
\sqrt N \bar \delta_N^{2} \int_0^1 (\bar \delta_N^2\epsilon)^{-d/(2\beta)} d\epsilon \lesssim \sqrt N  \bar \delta_N^{2 - \frac{d}{\beta}}
\end{equation}
since $\beta>d/2$. We will eventually show that the last bound converges to zero as $N \to \infty$, which also implies $N \sigma^2_N \lesssim N \bar \delta_N^4 \to 0$. The middle term in the maximum in (\ref{mbdc}) can similarly be bounded by 
\begin{align*}
\sqrt N  \mathscr J_N(\sigma_N, 1) &= \sqrt N  \int_0^{\sigma_N} \epsilon^{-d/(2\beta)} d\epsilon \lesssim \sqrt N \bar \delta_N^{2-\frac{d}{\beta}},
\end{align*}
and hence is of the same order as the one in (\ref{simple2}). For the third member in the maximum (\ref{mbdc}) we have, by a similar calculation,
\begin{align}\label{poisson}
 \frac{\bar \delta_N \sqrt{\log N}}{\sigma_N^2} \mathscr J_N^2\big(\sigma_N, 1 \big) &\lesssim \sqrt{\log N} \bar \delta_N^{1-\frac{2d}{\beta}}.
\end{align}
We can conclude from what precedes that it suffices to show that
\begin{equation}
\max\big(\sqrt N  \bar \delta_N^{2 - \frac{d}{\beta}}, N \bar \delta_N^3, \sqrt{\log N} \bar \delta_N^{1-\frac{2d}{\beta}} \big) \to 0
\end{equation}
as $N \to \infty$. This requires $\beta>2d$ and then simplifies to the basic requirement $N \bar \delta_N^3 \to 0$. In both the Schr\"odinger and the $X$-ray case we have $\bar \delta_N = N^{-r(\alpha)}$ with precise exponent $r(\alpha)>0$ given in the preceding subsections, which thus simplifies to $r(\alpha)>1/3$. For the rate $\bar \delta_N$ obtained in the Schr\"odinger model this necessitates (\ref{ohlord}) to hold, while in the $X$-ray case the corresponding rate translates into the condition 
\begin{equation}\label{ohjesus}
\frac{\alpha}{2\alpha+2}\frac{(\alpha-2)^2}{(\alpha-1)^2} >1/3,
\end{equation}
satisfied for  $\alpha \ge 9$. Both requirements on $\alpha$ imply in particular that we can choose $\beta$ such that $2d<\beta<\alpha-d/2$ (with $d=2$ in the $X$-ray case).

\section{Analytical results for non-Abelian $X$-ray transforms}

\subsection{Main results} \label{sec:mainXray}

This section contains the definitions and statements for the main analytical results needed on the non-Abelian $X$-ray transform, whose proofs can be found in Sec. \ref{lin}, \ref{sec:forward} and \ref{sec:isomorphism}. In particular, we compute the linearization of the map $\Phi\mapsto C_\Phi$ defined in \eqref{eq:Cphi} and its associated Fisher information operator. We then prove forward mapping properties of these operators in a fairly general setting (convex, non-trapping Riemannian manifolds). Finally, we show in the case of the Euclidean disk that the Fisher information operator is a bijection in suitable spaces. 

\subsubsection{Linearization and forward mapping properties on convex, non-trapping manifolds} \label{ssec:linearization}

Consider $(M,g)$ a $d$-dimensional Riemannian manifold with boundary that is non-trapping (in the sense that every geodesic reaches $\partial M$ in finite time) and has strictly convex boundary (in the sense of having a positive definite second fundamental form $\Pi$).  For background on such manifolds and the definitions that follow we refer to \cite{Sharafutdinov,PSU_book}.
Let $SM$ denote the unit sphere bundle on $M$, i.e.
$$SM:=\{(x,v)\in TM : |v|_g=1\}$$
with footpoint projection $\pi:SM\to M$.
We define the volume form on $SM$ by $d\Sigma^{2d-1}(x,v)=dV^d(x)\wedge dS_x(v)$, where $dV^d$ is the volume form on $M$ and $dS_x$ is the volume form on the fibre $S_x$.
The boundary of $SM$ is 
$$\partial SM:=\{(x,v)\in SM : x\in \partial M\}.$$
On $\partial SM$ the natural volume form is $d\Sigma^{2d-2}(x,v)=dV^{d-1}(x)\wedge dS_x(v)$, where $dV^{d-1}$ is the volume form on $\partial M$. We distinguish two subsets of $\partial SM$ (influx and outflux boundaries)
$$\partial_{\pm}SM:=\{(x,v)\in \partial SM : \pm\langle v,\nu(x)\rangle_g\geq 0\},$$
where $\nu(x)$ is the inward unit normal vector on $\partial M$ at $x$. It is easy to see that
$$\partial_{0}SM:=\partial_+SM\cap\partial_-SM=S(\partial M).$$
Given $(x,v)\in SM$, we let $\tau(x,v)$ denote the first time where the geodesic determined by $(x,v)$ hits $\partial M$ and we set $\mu(x,v):=\langle \nu(x),v\rangle$ for $(x,v)\in \partial SM$.
We let $X$ denote the geodesic vector field.

Fixing $n\in \mathbb{N}$, in order to give the linearization of the map 
\begin{align*}
    C^\infty(M, \Cm^{n\times n})\ni \Phi\mapsto C_\Phi \in C^\infty(\partial_+ SM,\Cm^{n\times n})    
\end{align*}
defined in \eqref{eq:Cphi}, we first recall some definitions. Given $m$ an integer and $\Theta\in C^\infty(M,\Cm^{m\times m})$ a skew-hermitian matrix field, we define the {\it attenuated X-ray transform} with attenuation $\Theta$
\[ I_\Theta\colon C^\infty(M,\Cm^m) \to C^\infty(\partial_+ SM, \Cm^m) \]
through $I_\Theta f := u|_{\partial_+ SM}$, where $u:SM\to \Cm^m$ solves the transport equation 
\begin{align*}
    Xu + \Theta u = -f \qquad (SM), \qquad u|_{\partial_- SM} = 0.
\end{align*}
Such a transform extends as a bounded map 
\begin{align}
    I_\Theta\colon L^2(M, \C^m) \to L^2(\partial_+ SM\to \C^{m}, (\mu/\tau) d\Sigma^{2d-2}), 
    \label{eq:funcsetting}
\end{align}
and we denote $I_\Theta^*$ its adjoint in this functional setting (computed in \eqref{eq:adjoint} below). Note that this differs from the volume form $\mu d\Sigma^{2d-2}$ on $\partial_+ SM$ determined by Santal\'o's formula (the symplectic volume form). For the unit disc in $\mathbb{R}^2$, $\mu/\tau=1/2$, so the probability measure $(\mu/\tau) d\Sigma^{2}$  agrees with $\lambda$. In general, and thanks to Lemma \ref{lemma:tauextension} below, the measure
$(\mu/\tau) d\Sigma^{2d-2}$ determines an {\it equivalent} $L^{2}$-norm as $d\Sigma^{2d-2}$ since $\mu/\tau$ is smooth and bounded away from zero.

These attenuated X-ray transforms are now well-studied \cite{E,No,Novikov_nonabelian,PSUGAFA,MNP19,SU}, and their connection to the scattering map \eqref{eq:Cphi} is as follows: the linearization of the map \eqref{eq:Cphi} about a point $\Phi$ involves an attenuated X-ray transform whose integrands belong to $C^\infty(M,\Cm^{n\times n})$, with attenuation $\Theta(\Phi,\Phi)$, a matrix field described through the formula (pointwise on $M$)
\begin{align*}
    \Theta(\Phi,\Phi) \cdot U := \Phi U - U \Phi, \qquad U\in \Cm^{n\times n}.
\end{align*}
The matrix field $\Theta(\Phi,\Phi)$ is skew-hermitian on $\Cm^{n\times n}$ equipped with the hermitian inner product $(A,B)\mapsto \text{tr} (AB^*)$.


More precisely, we prove in Section \ref{lin} the following lemma. 

\begin{Lemma} \label{lemma:approx}
    Let $(M,g)$ be a non-trapping manifold with strictly convex boundary. Given $\Phi\in C(M,\mathfrak{u}(n))$ and upon setting 
    \begin{align}
	\mathbb I_{\Phi}(h):=I_{\Theta(\Phi,\Phi)}(h)C_{\Phi}	
	\label{eq:linearized}
    \end{align}
    for $h\in C(M,\C^{n\times n})$ we have
    \begin{align*}
	\norm{C_{\Phi+h}-C_{\Phi}-\mathbb I_{\Phi}(h)}_{L^{2}}\lesssim \norm{h}_{L^{\infty}}\norm{h}_{L^{2}},	
    \end{align*}
    where the norm on the left-hand side is the $L^2(\partial_+ SM\to \C^{m}, (\mu/\tau) d\Sigma^{2d-2})$ norm.
\end{Lemma}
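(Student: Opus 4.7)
The plan is to realize $W := U_{\Phi+h}-U_\Phi$, where $U_\Phi(t)$ denotes the matrix solution of $\dot U+\Phi U=0$ with $U(\tau)=\mathrm{Id}$ along each geodesic (so that $C_\Phi = U_\Phi(0)$), as the solution of the inhomogeneous linear ODE $\dot W+\Phi W = -h\,U_{\Phi+h}$ with $W(\tau)=0$. Variation of parameters (using $U_\Phi^{-1}$ as integrating factor, noting $\frac{d}{dt}U_\Phi^{-1} = U_\Phi^{-1}\Phi$) produces $W(s) = U_\Phi(s)\int_s^\tau U_\Phi(r)^{-1}\,h(\gamma(r))\,U_{\Phi+h}(r)\,dr$, and substituting $U_{\Phi+h} = U_\Phi + W$ at $s=0$ splits $W(0)$ into the sum of a linear part $A_1 := U_\Phi(0)\int_0^\tau U_\Phi^{-1}h\,U_\Phi\,dr$ and a remainder $A_2 := U_\Phi(0)\int_0^\tau U_\Phi^{-1}h\,W\,dr$ that is quadratic in $h$.

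The first task is the algebraic identification $A_1 = \mathbb I_\Phi(h)(x,v)$, which is where the commutator structure of $\Theta(\Phi,\Phi)$ enters. If $u$ solves the transport equation $\dot u+[\Phi,u]=-h$, $u(\tau)=0$ defining $I_{\Theta(\Phi,\Phi)}(h)$, a direct computation shows that $v(t) := U_\Phi(t)^{-1}u(t)U_\Phi(t)$ satisfies $\dot v = -U_\Phi^{-1}hU_\Phi$ with $v(\tau)=0$; integrating and conjugating back gives $I_{\Theta(\Phi,\Phi)}(h)(x,v) = U_\Phi(0)\bigl(\int_0^\tau U_\Phi^{-1}hU_\Phi\,dr\bigr)U_\Phi(0)^{-1}$, and right-multiplying by $C_\Phi = U_\Phi(0)$ recovers $A_1$.

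The second task is to bound $A_2$ in $L^2(\partial_+SM,(\mu/\tau)d\Sigma^{2d-2})$. Since $\Phi \in \mathfrak{u}(n)$ is skew-Hermitian, $U_\Phi$ is pointwise unitary, and a Gronwall estimate applied to $|U_{\Phi+h}|^2$ (whose derivative along $\gamma$ retains only the non-skew-Hermitian part $h$) yields $|U_{\Phi+h}(s)| \leq e^{\tau\|h\|_\infty} = \mathcal O(1)$ in the regime $\|h\|_\infty \to 0$. Feeding this back into the formula for $W$ gives the pointwise bound $|W(s)|\lesssim \|h\|_\infty$ along each geodesic, and therefore $|A_2(x,v)| \lesssim \|h\|_\infty\int_0^{\tau(x,v)}|h(\gamma(r))|\,dr = \|h\|_\infty\,I_0(|h|)(x,v)$, with $I_0$ the unattenuated X-ray transform. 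The conclusion then follows by taking $L^2$-norms and invoking the continuity $I_0\colon L^2(M)\to L^2(\partial_+SM,(\mu/\tau)d\Sigma^{2d-2})$, which is the $\Theta=0$ case of \eqref{eq:funcsetting} (and equivalent to Santal\'o's formula combined with Cauchy--Schwarz).

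The main obstacle I expect is the clean identification of the linear part $A_1$ with $\mathbb I_\Phi(h)$: it requires handling carefully the right and left actions of the commutator attenuation $\Theta(\Phi,\Phi)$ and using the fact that its propagator acts by conjugation with $U_\Phi$. Once that algebraic step is in place, the remainder estimate is a routine Gronwall-plus-$L^2$-continuity argument, with constants uniform in $h$ as long as $\|h\|_\infty$ stays bounded.
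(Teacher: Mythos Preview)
Your proof is correct and follows a somewhat different, more self-contained route than the paper. The paper first establishes the exact pseudo-linearization identity $C_{\Phi+h}-C_{\Phi}=I_{\Theta(\Phi+h,\Phi)}(h)\,C_{\Phi}$ (via a general lemma on arbitrary integrating factors), so that the remainder automatically equals $(I_{\Theta(\Phi+h,\Phi)}(h)-I_{\Theta(\Phi,\Phi)}(h))C_{\Phi}$; it then writes this difference as $\int_0^\tau (u_{\Phi+h}^{-1}-u_\Phi^{-1})\,h\,u_\Phi\,dt$ and invokes the $L^2$ estimate $\|u_{\Phi+h}^*-u_\Phi^*\|_{L^2}\lesssim\|h\|_{L^2}$ from \cite{MNP19} together with Santal\'o's formula. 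Your argument instead expands $W=U_{\Phi+h}-U_\Phi$ directly by Duhamel, bounds $W$ in $L^\infty$ along each geodesic via Gronwall (using that $U_\Phi$ is unitary since $\Phi\in\mathfrak{u}(n)$), and controls the remainder pointwise by $\|h\|_\infty\, I_0(|h|)$ before passing to $L^2$. The two arguments are essentially the same algebra rearranged: your identification $A_1=\mathbb I_\Phi(h)$ is precisely the special case $R_\Phi=R_\Psi=U_\Phi$ of the paper's formula \eqref{eq:formulaendo}, and your pointwise bound $|W|\lesssim\|h\|_\infty$ plays the role of the cited $L^2$ estimate on $u_{\Phi+h}^*-u_\Phi^*$. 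Your version has the merit of being self-contained and making the quadratic structure of the remainder transparent; the paper's version has the advantage of exhibiting the pseudo-linearization identity, which is used independently elsewhere in the article.
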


In addition to \eqref{eq:linearized}, since $C_\Phi(x,v) \in U(n)$ for all $(x,v)\in \partial_+ SM$,
the Fisher information operator $\mathbb{N}_{\Phi}:=\mathbb I^*_{\Phi}\mathbb I_{\Phi}$ of the problem is directly related to the associated normal operator $I_{\Theta(\Phi,\Phi)}^{*}I_{\Theta(\Phi,\Phi)}$, namely: 
\begin{align}
    \mathbb{N}_{\Phi}:=\mathbb I^*_{\Phi}\mathbb I_{\Phi}=I_{\Theta(\Phi,\Phi)}^{*}I_{\Theta(\Phi,\Phi)}.
    \label{eq:fisherXray}
\end{align}
In particular, the forward mapping properties of $\mathbb{N}_\Phi$ are a special case of a more general result on the mapping properties of ``normal'' operators $I_\Theta^* I_\Theta$, which we prove in Section \ref{sec:forward}. 

\begin{theorem}\label{thm:forwardmapping}
    Let $(M,g)$ be a non-trapping manifold with strictly convex boundary, and let $\Theta\in C^\infty(M,\Cm^{m\times m})$. The operator $I_\Theta^* I_\Theta$ maps $C^\infty(M, \C^m)$ into itself.
\end{theorem}

From this result, it becomes straightforward to deduce that the Fisher information operator \eqref{eq:fisherXray} maps $C^\infty(M, \C^{n\times n})$ into itself. However, since $\Phi$ is often valued into a strict subalgebra of $\Cm^{n\times n}$, the last result below requires a Lie-algebra specific refinement. Let $G$ be any compact Lie group. Without loss of generality we may assume that $G\subset U(n)$, where $U(n)$ is the unitary group of $n\times n$ matrices and let $\mathfrak{g}$ be the Lie algebra of $G$. We are essentially interested in the case of $G=SO(n)$, where $\mathfrak{g}=\mathfrak{so}(n)$. Let us denote 
\begin{align}
    \C^{n\times n} = \mathfrak{g}\oplus \mathfrak{g}^{\perp}
    \label{eq:splitting}
\end{align}
the orthogonal splitting of $\Cm^{n\times n}$ for the Frobenius inner product. (When $\mathfrak{g}=\mathfrak{u}(n)$, $\mathfrak{g}^\perp$ is the space of hermitian matrices).

\begin{theorem} \label{thm:forwardmappingFisher}
    Let $(M,g)$ be a non-trapping manifold with strictly convex boundary, and let $\Phi\in C^\infty(M,\Cm^{n\times n})$. Then the following hold. 
    
    (1) The Fisher information operator $\mathbb{N}_\Phi$ \eqref{eq:fisherXray} maps $C^\infty(M, \C^{n\times n})$ into itself. 

    (2) If $\Phi\in C^\infty(M,\mathfrak{g})$, then in the splitting \eqref{eq:splitting}, the operator $\mathbb{N}_\Phi$ maps $C^\infty(M, \mathfrak{g})$ into itself and $C^\infty(M, \mathfrak{g}^\perp)$ into itself. 
\end{theorem}



\subsubsection{Isomorphism properties on the Euclidean disk}

In light of Theorem \ref{thm:forwardmapping}, the next question is then whether an isomorphism property holds. With the current tools available, such a question cannot be answered within the level of generality of the previous section. However, if the manifold $M$ is the Euclidean disk and the attenuation matrix $\Theta$ is compactly supported, then the normal operator $I_\Theta^* I_\Theta$ can be viewed as a relatively compact perturbation of the unattenuated case ($\Theta=0$), whose sharp mapping properties have recently been described in \cite{M}. This allows to prove in Section \ref{sec:isomorphism} an isomorphism property, using microlocal tools as well as Fredholm theory on a suitable scale of Hilbert spaces.

\begin{theorem}\label{thm:isomorphism}
    Suppose $M$ is the unit disk $\{(x,y)\in \R^2,\ x^2+y^2\le 1\}$, equipped with the Euclidean metric, and let $\Theta$ be a smooth, skew-hermitian $m\times m$ matrix field on $M$, with compact support in $M^{int}$. Then the map
    \begin{align*}
	I_\Theta^* I_\Theta \colon C^\infty(M, \C^{m}) \to C^\infty(M, \C^{m})
    \end{align*}
    is an isomorphism. 
\end{theorem}

Theorem \ref{thm:isomorphism} is an abridged version of Theorem \ref{thm:iso} below, where additional isomorphism properties on a special Sobolev scale (defined in Eqs. \eqref{eq:wtH0} and \eqref{eq:wtH}) are also given. 

Finally, we explain how Theorem \ref{thm:isomorphism} yields the Fisher information result that is needed for the proof of the Bernstein-von Mises theorem for the non-Abelian X-ray transform. Let $G$ be any compact Lie group and $\mathfrak{g}$ as in Section \ref{ssec:linearization}. 



\begin{theorem} 
    Let $M$ be the unit disk with the Euclidean metric and let $\Phi\in C^{\infty}_{c}(M,\mathfrak{g})$. Then 
    \[\mathbb{N}_{\Phi}=I_{\Theta(\Phi,\Phi)}^*I_{\Theta(\Phi,\Phi)}: C^{\infty}(M,\mathfrak{g})\to C^{\infty}(M,\mathfrak{g})\]
    is a bijection.
\label{theorem:inversefisherinfo}
\end{theorem}

\begin{proof} 
    Theorem \ref{thm:isomorphism} implies right away that 
    \[\mathbb{N}_{\Phi}:C^{\infty}(M,\C^{n\times n})\to C^{\infty}(M,\C^{n\times n})\]
    is a bijection. The further isomorphism property on $C^{\infty}(M,\mathfrak{g})$ is a direct consequence of item (2) in Theorem \ref{thm:forwardmappingFisher} and the fact that $C^{\infty}(M,\C^{n\times n})=C^{\infty}(M,\mathfrak{g})\oplus C^{\infty}(M,\mathfrak{g}^{\perp})$.
\end{proof}


\subsection{Linearizing $C_{\Phi}$. Proof of Lemma \ref{lemma:approx}}\label{lin}

Fix $(M,g)$ a compact non-trapping manifold with strictly convex boundary. We let $\varphi_{t}$ denote the geodesic flow of $g$; the integrals that appear below in the variable $t$ are all compositions of functions with $\varphi_{t}$; we avoid writting this explicitly in order to prevent notation cluttering. An {\it integrating factor} for $\Phi$ is a function $R_{\Phi}\in C(SM,GL(n,\C))$ which is differentiable along the geodesic vector field $X$ and $XR_{\Phi}+\Phi R_{\Phi}=0$.
If $\Phi$ is smooth, then it is not hard to see that smooth integrating factors always exist cf. \cite{PSU_book}.

Let $U_{\Phi}$ denote the unique integrating factor with $U_{\Phi}|_{\partial_{-}SM}=\id$. Then $C_{\Phi}:\partial_{+}SM\to GL(n,\C)$ is defined as
\[C_{\Phi}:=U_{\Phi}|_{\partial_{+}SM}.\]
We can also consider the unique integrating factor $u_{\Phi}$ with $u_{\Phi}|_{\partial_{+}SM}=\id$. It is immediate to check that $u_{\Phi}|_{\partial_{-}SM}=[C_{\Phi}]^{-1}\circ\alpha$, where $\alpha:\partial SM\to\partial SM$ denotes the scattering relation of the metric.

The next lemma will be useful for our purposes.

\begin{Lemma}\label{lemma:generalC}
    Let $R_{\Phi}$ and $R_{\Psi}$ be integrating factors for continuous matrix fields $\Phi$ and $\Psi$ respectively. Then 
    \begin{align*}
	C_{\Phi}-C_{\Psi}&=R_{\Phi}\left[\int_{0}^{\tau(x,v)}R_{\Phi}^{-1}(\Phi-\Psi)R_{\Psi}\,dt\right] (R^{-1}_{\Psi})\circ\alpha\\
	&=R_{\Phi}\left[I(R_{\Phi}^{-1}(\Phi-\Psi)R_{\Psi})\right] (R^{-1}_{\Psi})\circ\alpha
    \end{align*}
    where $I:C(SM)\to C(\partial_{+}SM)$ is the standard $X$-ray transform.
\end{Lemma}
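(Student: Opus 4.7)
The plan is to derive the identity by differentiating the matrix product $R_\Phi^{-1}R_\Psi$ along the geodesic flow, integrating along each geodesic chord between $\partial_+SM$ and $\partial_-SM$, and then identifying the boundary terms in terms of $C_\Phi$ and $C_\Psi$ via a description of general integrating factors.

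First I would compute $X(R_\Phi^{-1}R_\Psi)$ using the Leibniz rule and the defining transport equations. Since $XR_\Psi=-\Psi R_\Psi$, and since differentiating $R_\Phi^{-1}R_\Phi=\mathrm{Id}$ along $X$ yields $X(R_\Phi^{-1})=-R_\Phi^{-1}(XR_\Phi)R_\Phi^{-1}=R_\Phi^{-1}\Phi$, one obtains the clean identity
\begin{equation*}
X(R_\Phi^{-1}R_\Psi)\;=\;R_\Phi^{-1}(\Phi-\Psi)R_\Psi \qquad \text{on } SM.
\end{equation*}
Then I would apply the fundamental theorem of calculus along the geodesic $t\mapsto \varphi_t(x,v)$ starting at $(x,v)\in \partial_+SM$ and ending at $\alpha(x,v)=\varphi_{\tau(x,v)}(x,v)\in\partial_-SM$, yielding
\begin{equation*}
(R_\Phi^{-1}R_\Psi)\circ\alpha\,(x,v)\;-\;(R_\Phi^{-1}R_\Psi)(x,v)\;=\;\int_0^{\tau(x,v)} R_\Phi^{-1}(\Phi-\Psi)R_\Psi\,dt.
\end{equation*}

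Next I would multiply this identity on the left by $R_\Phi(x,v)$ and on the right by $(R_\Psi^{-1})\circ\alpha\,(x,v)$. After cancellations the left-hand side becomes $R_\Phi R_\Phi^{-1}\!\circ\alpha - R_\Psi R_\Psi^{-1}\!\circ\alpha$, so the key step is to show
\begin{equation*}
R_\Phi(x,v)\,R_\Phi^{-1}(\alpha(x,v))\;=\;C_\Phi(x,v),\qquad (x,v)\in\partial_+SM,
\end{equation*}
and similarly with $\Psi$. For this I would use that any two integrating factors for $\Phi$ differ by right multiplication by a matrix function that is a first integral of $X$: if $R_\Phi=U_\Phi S$ with $XS=0$, then $R_\Phi(x,v)R_\Phi^{-1}(\alpha(x,v))=U_\Phi(x,v)\,S(x,v)\,S^{-1}(\alpha(x,v))\,U_\Phi^{-1}(\alpha(x,v))$. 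Since $S$ is constant along the geodesic joining $(x,v)$ to $\alpha(x,v)$, the $S$-factors cancel, and since $U_\Phi\equiv \mathrm{Id}$ on $\partial_-SM$ and $U_\Phi\!\mid_{\partial_+SM}=C_\Phi$, the claim follows. Plugging this in gives the first stated equality. The second equality is then immediate from the definition of the unattenuated X-ray transform $If(x,v)=\int_0^{\tau(x,v)} f\circ\varphi_t\,dt$ applied to $f=R_\Phi^{-1}(\Phi-\Psi)R_\Psi$.

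The only genuine subtlety is the characterization of the family of integrating factors underlying the boundary identification, i.e.\ the fact that fixing the transport equation determines $R_\Phi$ up to a constant-along-geodesics right factor; everything else is a short computation with Leibniz and the fundamental theorem of calculus. Regularity is not an issue since $R_\Phi, R_\Psi$ are assumed differentiable along $X$ and the geodesics are of uniformly bounded length on a non-trapping manifold with strictly convex boundary.
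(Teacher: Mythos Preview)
Your proof is correct and follows essentially the same approach as the paper's: both compute $X(R_\Phi^{-1}R_\Psi)=R_\Phi^{-1}(\Phi-\Psi)R_\Psi$, integrate along the geodesic chord, and identify the boundary contributions using the relation $C_\Phi=R_\Phi(R_\Phi^{-1}\circ\alpha)$, which in turn is deduced from the fact that any integrating factor differs from $U_\Phi$ by a right factor constant along geodesics. The only difference is the order of presentation---the paper establishes the identity $C_\Phi=R_\Phi(R_\Phi^{-1}\circ\alpha)$ first and then does the transport computation, whereas you reverse this---but the content is the same.
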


\begin{proof} 
    We first note that if $R$ solves $XR+\Phi R=0$, then any other integrating factor has the form $RF^{\sharp}$, where $F^{\sharp}$ is the first integral  (i.e. $XF^{\sharp}=0$) determined by $F\in C(\partial_{+}SM,GL(n,\C))$. Thus $R_{\Phi}=U_{\Phi}F^{\sharp}$ and from this we deduce
    \begin{equation}
	C_{\Phi}=R_{\Phi}(R_{\Phi}^{-1}\circ\alpha).
	\label{eq:CintermsofR}
    \end{equation}
    
    Next we observe that a computation gives
    \[X(R_{\Phi}^{-1}R_{\Psi})=R_{\Phi}^{-1}(\Phi-\Psi)R_{\Psi}.\]
    Integrating this along a geodesic between boundary points gives
    \[\int_{0}^{\tau(x,v)}R_{\Phi}^{-1}(\Phi-\Psi)R_{\Psi}\,dt=-R_{\Phi}^{-1}R_{\Psi}(x,v)+R_{\Phi}^{-1}R_{\Psi}\circ\alpha(x,v),\]
    for $(x,v)\in \partial_{+}SM$. The lemma follows from this and \eqref{eq:CintermsofR}.
\end{proof}

\begin{Definition} Given $\Phi,\Psi\in C(M,\C^{n\times n})$ and $h\in C(M,\C^{n\times n})$,
consider the unique matrix solution to $Xu+\Phi u-u\Psi=-h$ with $u|_{\partial_{-}SM}=0$.
We define the attenuated X-ray transform of $h$ with attenuation $\Theta(\Phi,\Psi)$ as
\[I_{\Theta(\Phi,\Psi)}(h):=u|_{\partial_{+}SM}.\]
\end{Definition}

 In terms of arbitrary integrating factors $R_{\Phi}$ and $R_{\Psi}$ we can give an integral expression for $I_{\Theta(\Phi,\Psi)}$ as
\begin{equation}
I_{\Theta(\Phi,\Psi)}(h)=R_{\Phi}\left[\int_{0}^{\tau(x,v)}R_{\Phi}^{-1}hR_{\Psi}\,dt\right]R^{-1}_{\Psi}.
\label{eq:formulaendo}
\end{equation}
Indeed, consider the unique matrix solution to $Xu+\Phi u-u\Psi=-h$ with $u|_{\partial_{-}SM}=0$.
By definition $u|_{\partial_{+}SM}=I_{\Theta(\Phi,\Psi)}(h)$. We compute
\begin{align*}
    X(R_{\Phi}^{-1}uR_{\Psi})&=R_{\Phi}^{-1}\Phi u R_{\Psi}+R_{\Phi}^{-1} Xu R_{\Psi}-R_{\Phi}^{-1} u \Psi R_{\Psi}\\
    &=-R_{\Phi}^{-1}h R_{\Psi}.
\end{align*}
Integrating along a geodesic between boundary points we get
\[R^{-1}_{\Phi}I_{\Theta(\Phi,\Psi)}(h) R_{\Psi}=\int_{0}^{\tau(x,v)}R_{\Phi}^{-1}hR_{\Psi}\,dt\]
and hence \eqref{eq:formulaendo} follows.

\begin{Remark}{\rm Lemma \ref{lemma:generalC} already contains the pseudo-linearization identity from \cite[Lemma 5.5]{MNP19}. Indeed, using $u_{\Phi}$ and $u_{\Psi}$ as integrating factors, the lemma and \eqref{eq:CintermsofR} give
\begin{align}
C_{\Phi}-C_{\Psi}&=\left[\int_{0}^{\tau(x,v)}u_{\Phi}^{-1}(\Phi-\Psi)u_{\Psi}\,dt\right]C_{\Psi}.\label{eq:psdo}\\
&=I_{\Theta(\Phi,\Psi)}(\Phi-\Psi)\,C_{\Psi}.\label{eq:psdoi}
\end{align}
}
\end{Remark}

To find the linearization of $C_{\Phi}$, let $\Phi_{s}$ be a curve of matrix-valued maps such that $\Phi_{0}=\Phi$ and $h:=\partial_{s=0}\Phi_{s}$.
Differentiating the equation $XU_{\Phi_{s}}+\Phi_{s}U_{\Phi_{s}}=0$ at $s=0$ we obtain
\[XH+hU_{\Phi}+\Phi H=0\]
where $H:=\partial_{s=0}U_{\Phi_{s}}$. Note that $H|_{\partial_{+}SM}=dC_{\Phi}(h)$.
Then the matrix $W:=HU^{-1}_{\Phi}$ satisfies
\[XW+\Phi W-W\Phi=-h.\]
Hence
\[W|_{\partial_{+}SM}=I_{\Theta(\Phi,\Phi)}(h)\]
and thus
\begin{equation}
dC_{\Phi}(h)=I_{\Theta(\Phi,\Phi)}(h)C_{\Phi}.
\label{eq:linC}
\end{equation}
We can now combine this with \eqref{eq:psdoi} to obtain
\begin{equation}
C_{\Phi+h}-C_{\Phi}-dC_{\Phi}(h)=(I_{\Theta(\Phi+h,\Phi)}(h)-I_{\Theta(\Phi,\Phi)}(h))C_{\Phi}.
\label{eq:lin2}
\end{equation}
We now use this identity to prove Lemma \ref{lemma:approx}.

\begin{proof}[Proof of Lemma \ref{lemma:approx}] 

    From \eqref{eq:psdo} and \eqref{eq:psdoi} we know that
    \[I_{\Theta(\Phi,\Psi)}(h)=\int_{0}^{\tau}u_{\Phi}^{-1}hu_{\Psi}\,dt.\]
    Thus
    \[I_{\Theta(\Phi+h,\Phi)}(h)-I_{\Theta(\Phi,\Phi)}(h)=\int_{0}^{\tau}(u_{\Phi+h}^{-1}-u_{\Phi}^{-1})hu_{\Phi}\,dt.\]
    Since $u_{\Phi}$ takes values in the unitary group, we can estimate using the Frobenius norm
    \[|(I_{\Theta(\Phi+h,\Phi)}(h)-I_{\Theta(\Phi,\Phi)}(h))C_{\Phi}|_{F}(x,v)\leq \int_{0}^{\tau}|(u^{*}_{\Phi+h}-u_{\Phi}^{*})h|_{F}\,dt.\]
    Using that $\tau\leq C_{0}\mu(x,v)$ (cf. Lemma \ref{lemma:tauextension} below) and Cauchy-Schwarz
    \[|(I_{\Theta(\Phi+h,\Phi)}(h)-I_{\Theta(\Phi,\Phi)}(h))C_{\Phi}|^{2}_{F}(x,v)\leq C_{0}\int_{0}^{\tau}|(u^{*}_{\Phi+h}-u_{\Phi}^{*})h|^{2}_{F}\,dt\mu(x,v)\]
    for $(x,v)\in \partial_{+}SM$. Integrating now over $\partial_{+}SM$ and using Santal\'o's formula we derive
    \[\norm{(I_{\Theta(\Phi+h,\Phi)}(h)-I_{\Theta(\Phi,\Phi)}(h))C_{\Phi}}_{L^{2}}\lesssim \norm{(u^*_{\Phi+h}-u^{*}_{\Phi})h}_{L^{2}}.\]
    
    Using \cite[Equation (5.8)]{MNP19} we have (strictly speaking the proof in \cite{MNP19} is for $U_{\Phi}$ but the same proof applies to $u^{*}_{\Phi}$)
    \[\norm{u^{*}_{\Phi+h}-u^{*}_{\Phi}}_{L^{2}}\lesssim \norm{h}_{L^{2}}\]
    and putting everything together using \eqref{eq:lin2}
    \begin{align*}
	\norm{C_{\Phi+h}-C_{\Phi}-dC_{\Phi}(h)}_{L^{2}}\lesssim \norm{h}_{L^{\infty}}\norm{h}_{L^{2}}.	
    \end{align*}
\end{proof}

\begin{Lemma}\label{lem:fisher}
    We have
    \[\mathbb{N}_{\Phi}:=\mathbb I^*_{\Phi}\mathbb I_{\Phi}=I_{\Theta(\Phi,\Phi)}^{*}I_{\Theta(\Phi,\Phi)}.\]
\end{Lemma}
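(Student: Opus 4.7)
The identity follows from a direct adjoint computation that exploits the explicit formula for the linearization from Lemma \ref{lemma:approx} together with the unitarity of $C_{\Phi}$ when $\Phi$ is skew-hermitian.

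First, I will verify that $C_{\Phi}$ is unitary-valued. For $\Phi\in C(M,\mathfrak{u}(n))$ the matrix $V := U_{\Phi}U_{\Phi}^{*}$ satisfies $X(U_{\Phi}^{*}) = U_{\Phi}^{*}\Phi$ (using $\Phi^{*}=-\Phi$) and hence $XV + \Phi V - V\Phi = 0$ on $SM$ with $V|_{\partial_{-}SM}=\id$. Since the constant map $\id$ solves the same transport equation with the same boundary value, uniqueness forces $V\equiv \id$, so $U_{\Phi}$ and in particular $C_{\Phi}=U_{\Phi}|_{\partial_{+}SM}$ take values in $U(n)$.

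Second, I will compute $\mathbb I_{\Phi}^{*}$ relative to the Frobenius inner products on $L^{2}(M,\C^{n\times n})$ and on $L^{2}(\partial_{+}SM\to \C^{n\times n},(\mu/\tau)\,d\Sigma^{2d-2})$ as in \eqref{eq:funcsetting}. The pointwise algebraic identity
\[
\langle AC,B\rangle_{F}=\text{tr}(ACB^{*})=\text{tr}(A(BC^{*})^{*})=\langle A,BC^{*}\rangle_{F},
\]
valid for arbitrary matrices $A,B,C$, combined with \eqref{eq:linearized} and the defining property of $I_{\Theta(\Phi,\Phi)}^{*}$ gives, for $h\in L^{2}(M,\C^{n\times n})$ and $g\in L^{2}(\partial_{+}SM)$,
\begin{align*}
\langle \mathbb I_{\Phi}(h),g\rangle_{L^{2}(\partial_{+}SM)}
&= \langle I_{\Theta(\Phi,\Phi)}(h)\,C_{\Phi},\,g\rangle_{L^{2}(\partial_{+}SM)} \\
&= \langle I_{\Theta(\Phi,\Phi)}(h),\,g\,C_{\Phi}^{*}\rangle_{L^{2}(\partial_{+}SM)} \\
&= \langle h,\, I_{\Theta(\Phi,\Phi)}^{*}(g\,C_{\Phi}^{*})\rangle_{L^{2}(M)},
\end{align*}
so $\mathbb I_{\Phi}^{*}(g)=I_{\Theta(\Phi,\Phi)}^{*}(g\,C_{\Phi}^{*})$.

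Third, composing with $\mathbb I_{\Phi}$ and invoking unitarity $C_{\Phi}C_{\Phi}^{*}=\id$ pointwise on $\partial_{+}SM$,
\[
\mathbb I_{\Phi}^{*}\mathbb I_{\Phi}(h) \;=\; I_{\Theta(\Phi,\Phi)}^{*}\bigl(I_{\Theta(\Phi,\Phi)}(h)\,C_{\Phi}C_{\Phi}^{*}\bigr) \;=\; I_{\Theta(\Phi,\Phi)}^{*}I_{\Theta(\Phi,\Phi)}(h),
\]
which is the claim. The computation is essentially routine; the only points requiring attention are the unitarity argument for $C_{\Phi}$ (handled above via a uniqueness argument for the transport equation) and the bookkeeping of the weight $\mu/\tau$, which is already absorbed into the definition of $I_{\Theta(\Phi,\Phi)}^{*}$ via \eqref{eq:funcsetting} and therefore does not need to be tracked separately.
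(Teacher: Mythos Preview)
Your proof is correct and follows essentially the same approach as the paper: both arguments rest on the unitarity of $C_{\Phi}$ and the fact that right-multiplication by a unitary matrix preserves the Frobenius inner product. The paper's proof is terser, simply noting that unitarity gives $\langle \mathbb I_{\Phi}(\cdot),\mathbb I_{\Phi}(\cdot)\rangle_{L^{2}}=\langle I_{\Theta(\Phi,\Phi)}(\cdot),I_{\Theta(\Phi,\Phi)}(\cdot)\rangle_{L^{2}}$ and deducing the operator identity from the equality of quadratic forms, whereas you supply the explicit verification of unitarity and compute $\mathbb I_{\Phi}^{*}$ directly; the substance is the same.
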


\begin{proof}
    Since the matrix $C_{\Phi}$ is unitary we have
    \begin{align*}
	\langle \mathbb I_{\Phi}(\cdot),\mathbb I_{\Phi}(\cdot)\rangle_{L^{2}}=\langle I_{\Theta(\Phi,\Phi)}(\cdot),I_{\Theta(\Phi,\Phi)}(\cdot)\rangle_{L^{2}}	
    \end{align*}
    and the lemma follows.
\end{proof}

\begin{Remark}\label{conxray}{\rm Since the attenuated X-ray transform $I_{\Theta(\Phi,\Phi)}$ extends as a bounded map from $L^{2}(M)\to L^{2}(\partial_{+}SM)$, the same is true for $\mathbb I_{\Phi}$. Boundedness in $L^{\infty}$ for $\mathbb I_{\Phi}$ is also obvious from the integral expression
 \[I_{\Theta(\Phi,\Phi)}(h)=\int_{0}^{\tau}u_{\Phi}^{-1}hu_{\Phi}\,dt.\]
}
\end{Remark}


\subsection{Forward mapping properties. Proof of Theorems \ref{thm:forwardmapping} and \ref{thm:forwardmappingFisher}} \label{sec:forward}

Let $(M,g)$ be a non-trapping manifold with strictly convex boundary. We need the following facts (cf. \cite{PSU_book, Sharafutdinov}).

\begin{enumerate}
\item The function
\[\tilde{\tau}(x,v)=\left\{\begin{array}{ll}
\tau(x,v),\;\;(x,v)\in\partial_{+}SM,\\
-\tau(x,-v),\;\;(x,v)\in\partial_{-}SM\\
\end{array}\right.\]
belongs to $C^{\infty}(\partial SM)$. Actually $\tau:SM\to\mathbb{R}$ solves transport problem $X\tau=-1$
with $\tau|_{\partial_{-}SM}=0$ and the function $\tilde{\tau}=\tau(x,v)-\tau(x,-v)$ belongs to $C^{\infty}(SM)$.
\item The scattering relation $\alpha:\partial SM\to \partial SM$ is the diffeomorphism defined by 
$$\alpha(x,v)=\varphi_{\tilde{\tau}(x,v)}(x,v).$$
\item The scattering relation satisfies $\alpha^2=id$, based on the property $\tilde{\tau}\circ\alpha=-\tilde{\tau}$.
\end{enumerate}

For what follows it is convenient to consider $(M,g)$ isometrically embedded in a closed manifold $(N,g)$, so that the geodesic flow can run for all times.
Let $\rho\in C^{\infty}(N)$ be a boundary defining function for $\partial M$. That means that $\rho$ coincides with $M\ni x\mapsto d(x,\partial M)$ in a neighbourhood of $\partial M$, $\rho\geq 0$ on $M$ and $\partial M=\rho^{-1}(0)$. If we let $\nu$ be the inward unit normal, then  $\nabla \rho(x)=\nu(x)$ for all $x\in \partial M$.
Consider the function $h:\partial SM\times \R\to\R$ given by
\[h(x,v,t):=\rho(\pi\circ\varphi_{t}(x,v)).\]
Note
\begin{align*}
    h(x,v,0)=0, \qquad \left.\frac{d}{dt}\right|_{t=0} h(x,v,t)=\langle \nu(x),v\rangle, \qquad \left.\frac{d^2}{dt^2}\right|_{t=0} h(x,v,t)=\text{Hess}_{x}\rho(v,v).
\end{align*}

Hence there is a smooth function $R:\partial SM\times \R\to\R$ such that we can write
\begin{equation}\label{eq:taylorR}
h(x,v,t)=\langle \nu(x),v\rangle t+\frac{1}{2}\text{Hess}_{x}\rho(v,v) t^2+R(x,v,t)t^3.
\end{equation}
Since $h(x,v,\tilde{\tau}(x,v))=0$, it follows that
\begin{equation}\label{eq:tautilde}
\langle \nu(x),v\rangle +\frac{1}{2}\text{Hess}_{x}\rho(v,v) \tilde{\tau}+R(x,v,\t)\t^2=0.
\end{equation}
Note that $\t(x,v)=0$ iff $(x,v)\in \partial_{0}SM$.
Hence if we let
\[H(x,v,t):=\langle \nu(x),v\rangle +\frac{1}{2}\text{Hess}_{x}\rho(v,v)t+R(x,v,t)t^2\]
we see that $H$ is smooth, $H(x,v,\t(x,v))=0$ and
\[ \left.\frac{d}{dt}\right|_{t=0} H(x,v,t)=\frac{1}{2}\text{Hess}_{x}\rho(v,v).\]
But for $(x,v)\in \partial_{0}SM$, $\text{Hess}_{x}\rho(v,v)=-\Pi_{x}(v,v)<0$ and thus by the implicit function theorem, $\t$ is smooth in a neighbourhood of $\partial_{0}SM$. Since $\t$ is smooth in $\partial SM\setminus\partial_{0}SM$ this gives smoothness of $\t$ in $\partial SM$. A tweak of this argument gives the following lemma that is probably well-known to experts.
Recall that $\mu(x,v)=\langle\nu(x),v\rangle$ for $(x,v)\in \partial SM$.

\begin{Lemma} \label{lemma:tauextension}
    Let $(M,g)$ be a non-trapping manifold with strictly convex boundary. The function $\mu/\t$ extends to a smooth positive function on $\partial SM$ with values on $\partial_0 SM$ given by
    \begin{align*}
	\frac{\mu}{\t}(x,v) = \frac{\Pi_{x}(v,v)}{2}, \qquad (x,v)\in \partial_0 SM.
    \end{align*}
\end{Lemma}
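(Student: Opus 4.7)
The plan is to exploit the implicit characterization of $\t$ established just above the lemma, namely equation \eqref{eq:tautilde}, which reads
\[
\mu(x,v) + \tfrac{1}{2}\,\text{Hess}_{x}\rho(v,v)\,\t(x,v) + R(x,v,\t(x,v))\,\t(x,v)^{2} = 0.
\]
The key observation is that $\mu$ appears linearly while each of the remaining summands carries a factor of $\t$. Factoring, $\mu = -\t\bigl[\tfrac{1}{2}\text{Hess}_{x}\rho(v,v) + R(x,v,\t)\,\t\bigr]$, so on $\partial SM\setminus\partial_{0}SM$ (where $\t\neq 0$) I would divide through to obtain
\[
\frac{\mu(x,v)}{\t(x,v)} = -\tfrac{1}{2}\,\text{Hess}_{x}\rho(v,v) - R(x,v,\t(x,v))\,\t(x,v).
\]
Since $\t\in C^{\infty}(\partial SM)$ has just been established via the implicit function theorem in the preceding paragraph, and $R,\rho$ are smooth, the right-hand side is manifestly smooth on all of $\partial SM$. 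This furnishes the desired smooth extension of $\mu/\t$.

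To identify the value at a glancing point $(x,v)\in\partial_{0}SM$, I would set $\t=0$ in the extended formula, obtaining $-\tfrac{1}{2}\text{Hess}_{x}\rho(v,v)$. At such $(x,v)$ the vector $v$ is tangent to $\partial M$; since $\nabla\rho = \nu$ on $\partial M$ and $v\rho$ vanishes identically along $\partial M$, a standard computation via the Gauss formula yields $\text{Hess}_{x}\rho(v,v) = -\langle\nabla_{v}v,\nu\rangle = -\Pi_{x}(v,v)$, giving the stated boundary value $\Pi_{x}(v,v)/2$. Strict convexity of $\partial M$ then immediately implies positivity of this value on $\partial_{0}SM$.

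For positivity away from $\partial_{0}SM$, the sign structure of $\mu$ and $\t$ is consistent: on $\partial_{+}SM\setminus\partial_{0}SM$ both are strictly positive, while on $\partial_{-}SM\setminus\partial_{0}SM$ both are strictly negative (since $\t(x,v)=-\tau(x,-v)$ and $\mu(x,v)<0$ there), so the quotient is strictly positive on $\partial SM\setminus\partial_{0}SM$. Combined with the calculation on $\partial_{0}SM$, the extension is positive throughout $\partial SM$. There is no genuine obstacle here: essentially all the analytic content has been done in the paragraph preceding the lemma, and the proof reduces to dividing the known identity \eqref{eq:tautilde} by $\t$ and invoking a standard Riemannian identification at the boundary.
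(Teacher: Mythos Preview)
Your proof is correct and follows essentially the same approach as the paper: you divide the identity \eqref{eq:tautilde} by $\t$ to obtain the formula $\mu/\t=-\tfrac{1}{2}\text{Hess}_{x}\rho(v,v)-R(x,v,\t)\t$, observe that the right-hand side is smooth on all of $\partial SM$, evaluate at $\t=0$ to get $\Pi_{x}(v,v)/2$, and check positivity via the sign agreement of $\mu$ and $\t$. The only (minor) addition is your justification of $\text{Hess}_{x}\rho(v,v)=-\Pi_{x}(v,v)$ via the Gauss formula, which the paper simply uses as an already-noted fact.
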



\begin{proof} Using \eqref{eq:tautilde} we can write
\[\mu(x,v)= -\frac{1}{2}\text{Hess}_{x}\rho(v,v) \tilde{\tau}-R(x,v,\t)\t^2\]
and hence for $(x,v)\in \partial SM\setminus\partial_{0}SM$ near $\partial_{0}SM$ we can write
\begin{equation}\label{eq:p}
\mu/\t=-\frac{1}{2}\text{Hess}_{x}\rho(v,v)-R(x,v,\t)\t.
\end{equation}
But the right hand side of the last equation is a smooth function near $\partial_{0}SM$ since $R$ and $\t$ are; its
value at $(x,v)\in\partial_{0}SM$ is $\Pi_{x}(v,v)/2$. Finally, observe that $\mu$ and $\t$ are both positive for $(x,v)\in\partial_{+}SM\setminus\partial_{0}SM$ and both negative for $(x,v)\in\partial_{-}SM\setminus\partial_{0}SM$.
\end{proof}

\subsubsection{The maps $\Upsilon$ and $F$} We now introduce two important maps for what follows. 

Consider the map
\begin{align}
    \Upsilon:\partial_{+}SM\times [0,1]\to SM, \qquad \Upsilon(x,v,u):=\varphi_{u\tau(x,v)}(x,v).    
    \label{eq:Phi}
\end{align}
This map is smooth and it extends smoothly to
\[\Upsilon:\partial (SM)\times [0,1]\to SM\]
by setting $\Upsilon(x,v,u)=\varphi_{u\t(x,v)}(x,v)$. Note that $\Upsilon(x,v,0)=\text{Id}$, $\Upsilon(x,v,1)=\alpha(x,v)$ and $\Upsilon(\alpha(x,v),u) = \Upsilon(x,v,1-u)$.  In other words, if we let $\Gamma:\partial (SM)\times [0,1]\to\partial (SM)\times [0,1]$ be $\Gamma(x,v,u):=(\alpha(x,v),1-u)$, then $\Upsilon\circ \Gamma=\Upsilon$. The map $\Upsilon$ is a 2-1 cover with deck transformation $\Gamma$ away from $\partial_0 SM\times[0,1]$.

For brevity we shall denote $p:=\mu/\t\in C^{\infty}(\partial SM)$. We let $F:\partial SM\setminus\partial_{0}SM\times (0,1)\to \R$ be
\begin{align}
    F(x,v,u):=\frac{\rho(\pi\circ \varphi_{u\tilde{\tau}}(x,v))}{\tilde{\tau}^{2} u(1-u)}=\frac{h(x,v,u\t)}{\tilde{\tau}^{2} u(1-u)}>0.    
    \label{eq:F}
\end{align}

\begin{Proposition}\label{prop:Ffunc} The function $F$ extends to a smooth positive function $F:\partial SM\times [0,1]\to\R$ such that 
    \begin{enumerate}
	\item[(a)] $F(\alpha(x,v),u)=F(x,v,1-u)$;
	\item[(b)] $F(x,v,0)=p(x,v)$ and $F(x,v,1)=p\circ\alpha(x,v)$;
	\item[(c)] $F(x,v,u)=\Pi_{x}(v,v)$ for $(x,v,u)\in\partial_0 SM\times [0,1]$.
    \end{enumerate}
\end{Proposition}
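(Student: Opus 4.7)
The plan is to convert the two-sided vanishing of the numerator into a manifest factorization by exploiting the boundary constraint at $u=1$, then invoke Hadamard's lemma for the remainder. The numerator $\tilde h(x,v,u):=\rho(\pi\circ\varphi_{u\t}(x,v))=\rho\circ\pi\circ\Upsilon(x,v,u)$ is smooth on $\partial SM\times[0,1]$ since $\Upsilon$ is, and it vanishes at both $u=0$ (since $\pi\circ\Upsilon(x,v,0)=x\in\partial M$) and $u=1$ (since $\pi\circ\Upsilon(x,v,1)=\pi\alpha(x,v)\in\partial M$). Together with the two-sided vanishing of $\t^2 u(1-u)$ along $\partial_0 SM\cup(\partial SM\times\{0,1\})$, the objective is to show that these zeros cancel cleanly to a smooth positive function.

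To execute this, I will substitute $t=u\t$ into the Taylor expansion \eqref{eq:taylorR} and eliminate $\mu$ using the constraint $\tilde h(x,v,1)=0$, which by \eqref{eq:tautilde} gives $\mu=-\tfrac{1}{2}\text{Hess}_x\rho(v,v)\,\t-R(x,v,\t)\t^2$. The algebraic observation driving the proof is that, after this substitution, the leading term assembles as
\[
\tilde h(x,v,u) \;=\; -\tfrac{1}{2}\text{Hess}_x\rho(v,v)\,\t^2 u(1-u) \;+\; \t^3 u\bigl[u^2 R(x,v,u\t)-R(x,v,\t)\bigr],
\]
where the bracketed expression vanishes identically at $u=1$, so Hadamard's lemma produces a function $Q(x,v,u)$ smooth on $\partial SM\times[0,1]$ with $u^2 R(x,v,u\t)-R(x,v,\t)=-(1-u)Q(x,v,u)$. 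Dividing through by $\t^2 u(1-u)$ yields the closed form
\[
F(x,v,u) \;=\; -\tfrac{1}{2}\text{Hess}_x\rho(v,v) \;-\; \t(x,v)\,Q(x,v,u),
\]
which manifestly extends $F$ smoothly to all of $\partial SM\times[0,1]$.

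The three listed properties then read off directly. Property (1) follows from the deck-transformation identity $\Upsilon\circ\Gamma=\Upsilon$, which transfers to $\tilde h(\alpha(x,v),u)=\tilde h(x,v,1-u)$, combined with $(\t\circ\alpha)^2=\t^2$. For (2), evaluating the closed form at $u=0$ and comparing with the expression $p=\mu/\t=-\tfrac{1}{2}\text{Hess}_x\rho(v,v)-R(x,v,\t)\t$ obtained in the proof of Lemma \ref{lemma:tauextension} forces $Q(x,v,0)=R(x,v,\t)$, so $F(x,v,0)=p(x,v)$; the identity $F(x,v,1)=p\circ\alpha(x,v)$ then follows by composing with (1). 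For (3), on $\partial_0 SM$ we have $\t=0$, and hence $F(x,v,u)=-\tfrac{1}{2}\text{Hess}_x\rho(v,v)=\tfrac{1}{2}\Pi_x(v,v)$ using the identification $\text{Hess}_x\rho(v,v)=-\Pi_x(v,v)$ from Lemma \ref{lemma:tauextension}. Positivity of $F$ is then routine: strict in the interior by $\rho>0$ at interior geodesic points, on $\partial_0 SM$ by strict convexity via (3), and on the edges $u\in\{0,1\}$ by positivity of $p$. The main conceptual step is the algebraic one of using $\tilde h(\cdot,\cdot,1)=0$ to manufacture the factor $u(1-u)$ explicitly, which is what reduces the seemingly two-sided cancellation problem to a single application of Hadamard's lemma; a naive Hadamard factorization $\tilde h=u(1-u)K$ would still require a separate verification that $K$ vanishes to order two in $\t$ on $\partial_0 SM$.
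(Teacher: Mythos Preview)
Your proof is correct and follows essentially the same route as the paper: substitute $t=u\t$ into the Taylor expansion \eqref{eq:taylorR}, eliminate $\mu$ via \eqref{eq:tautilde}, and apply Hadamard's lemma to extract the factor $(1-u)$. The only cosmetic difference is that you apply Hadamard to $u^2 R(x,v,u\t)-R(x,v,\t)$, yielding the compact closed form $F=-\tfrac{1}{2}\mathrm{Hess}_x\rho(v,v)-\t\,Q$, whereas the paper applies it to $R(x,v,u\t)-R(x,v,\t)$ and arrives at the equivalent expression $F=-\tfrac{1}{2}\mathrm{Hess}_x\rho(v,v)-R(x,v,\t)\t(1+u)+u^2 Q\t$; the two $Q$'s are related by $Q_{\text{yours}}=(1+u)R(x,v,\t)-u^2 Q_{\text{paper}}$. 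Your verification of items (1)--(3) and of positivity mirrors the paper's. Note that both your computation and the paper's own proof actually yield $F(x,v,u)=\tfrac{1}{2}\Pi_x(v,v)$ on $\partial_0 SM\times[0,1]$ (consistent with Lemma~\ref{lemma:tauextension} and with item (2) at $u=0$), so the statement of item (3) carries a factor-of-$2$ typo.
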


\begin{proof} Using the definition of $F$ and \eqref{eq:taylorR} we can write
    \[F(x,v,u)=\frac{\mu(x,v) +\frac{1}{2}\text{Hess}_{x}\rho(v,v)u\t+R(x,v,u\t)u^2\t^2}{\t(1-u)}.\]
    Since $R$ and $\t$ are smooth, there is a smooth function $Q:\partial SM\times \R\to\R$ such that
    \[R(x,v,u\t(x,v))-R(x,v,\t(x,v))=(1-u)Q(x,v,u).\]
    Combining this with \eqref{eq:tautilde} we can write $F$ as
    \begin{equation}
	F(x,v,u)=-\frac{1}{2}\text{Hess}_{x}\rho(v,v)-R(x,v,\t)\t(1+u)+u^{2}Q(x,v,u)\t.
	\label{eq:Fisgood}
    \end{equation}
    The right hand side of this equation is a smooth function on $\partial SM\times \R$ thus showing that $F$
    extends to a smooth function on $\partial SM\times [0,1]$ as claimed.
    
    To check item (a), we check it first for $(x,v,u)\in \partial SM\setminus\partial_{0}SM\times (0,1)$.
    This is straightforward from the definition of $F$ and the fact that $\t\circ\alpha=-\t$. Since $\partial SM\setminus\partial_{0}SM\times (0,1)$ is dense in $\partial SM\times [0,1]$ item (a) follows.
    To check item (b) we use \eqref{eq:Fisgood} for $u=0$; it yields
    \[F(x,v,0)=-\frac{1}{2}\text{Hess}_{x}\rho(v,v)-R(x,v,\t)\t\]
    and from \eqref{eq:p} we see that it agrees with $p$. Combining this with item (b) we see that
    $F(x,v,1)=F(\alpha(x,v),0)=p\circ\alpha (x,v)$ as claimed.
    Item (c) follows from \eqref{eq:Fisgood} and the facts that $\t(x,v)=0$  and $\text{Hess}_{x}\rho(v,v)=-\Pi_{x}(v,v)$ for $(x,v)\in\partial_{0}SM$. Finally, the positivity of $F$ is a consequence of the positivity of $p$ and the second fundamental form $\Pi$. 
\end{proof}

\subsubsection{General mapping properties and proof of Theorems \ref{thm:forwardmapping} and \ref{thm:forwardmappingFisher}} Fix $m,p$ two arbitrary integers. Given a weight $w\in C^\infty(SM, \C^{m\times p})$ and for $f\in C^\infty(SM, \C^m)$, we define the weighted transform $\II^w\colon L^2(SM, \C^p) \to L^2(\partial_+ SM \to \C^m,\ \frac{\mu}{\tau} d\Sigma^{2d-2})$ as 
\begin{align*}
    \II^w f(x, v) := \int_0^{\tau(x,v)} w(\varphi_t(x,v)) f(\varphi_t(x,v))\ dt, \qquad (x,v) \in \partial_+ SM.
\end{align*}
An important space for what follows is given by 
\begin{align*}
    C^{\infty}_{\alpha} (\partial_+ SM) &:= \{ u \in C^\infty(\partial_+ SM), \quad u_\psi \in C^\infty(SM) \} \\
    &= \{ u \in C^\infty(\partial_+ SM), \quad A_+ u \in C^\infty(\partial SM) \},
\end{align*}
where for $u\in C^\infty(\partial_+ SM)$, we have defined $A_+ u \in C^\infty(\partial SM \backslash \partial_0 SM)$ as
\begin{align*}
    A_+ u (x,v) = \left\{
    \begin{array}{ll}
	u(x,v), & (x,v) \in \partial_+ SM, \\
	u(\alpha(x,v)), & (x,v) \in \partial_- SM.
    \end{array}
    \right.
\end{align*}
Such a space was first introduced in \cite{PU} as a 'natural' space of functions which are mapped into $C^\infty(M)$ through the traditional adjoint of the X-ray transform, and the second equality is a characterization proved in \cite{PU}. We extend this definition to vector-valued functions, namely $C^{\infty}_{\alpha} (\partial_+ SM, \C^m) := (C^{\infty}_{\alpha} (\partial_+ SM))^m$. With $\rho$ a boundary defining function for $M$ as above, we now show the following result. 

\begin{Proposition} \label{prop:mappinggeneral}
    Fix $m,p$ and a smooth weight $w$ as above. For every $s < 1$, the following mapping property holds: 
    \begin{align*}
	\II^w\colon \rho^{-s} C^\infty(SM,\C^p) \to \tau^{1-2s} C^\infty_{\alpha} (\partial_+ SM,\C^m).
    \end{align*}    
\end{Proposition}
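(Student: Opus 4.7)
The plan is to rewrite $\II^w f$ via the change of variable $t = u\tau(x,v)$ so that the geometry becomes parametrized by the map $\Upsilon$ from \eqref{eq:Phi}, and then to isolate the singular factor $\tau^{1-2s}$ using the identity $\rho\circ\pi\circ\Upsilon = \tau^{2}\,u(1-u)\,F$ afforded by \eqref{eq:F} and Proposition \ref{prop:Ffunc}. For $f = \rho^{-s} g$ with $g \in C^\infty(SM,\C^p)$ and $(x,v) \in \partial_+ SM$, this substitution gives
\begin{align*}
    \II^w f(x,v) \;=\; \tau(x,v)^{\,1-2s}\int_{0}^{1} u^{-s}(1-u)^{-s}\, F(x,v,u)^{-s}\, w(\Upsilon(x,v,u))\, g(\Upsilon(x,v,u))\, du,
\end{align*}
so it suffices to prove that the integral, call it $H(x,v)$, lies in $C^\infty_\alpha(\partial_+ SM,\C^m)$.

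Next I would introduce the \emph{global} version of this integral on $\partial SM$: define
\begin{align*}
    \widetilde H(x,v) \;:=\; \int_0^1 u^{-s}(1-u)^{-s}\, F(x,v,u)^{-s}\, w(\Upsilon(x,v,u))\, g(\Upsilon(x,v,u))\, du,\qquad (x,v)\in\partial SM.
\end{align*}
By Proposition \ref{prop:Ffunc} the function $F$ extends to a smooth and strictly positive function on $\partial SM\times[0,1]$, while $\Upsilon$ extends smoothly to $\partial SM\times[0,1]$ and $w,g$ are smooth on $SM$. Hence $F^{-s}(w\circ\Upsilon)(g\circ\Upsilon)$ is a smooth $\C^m$-valued function of $(x,v,u)\in\partial SM\times[0,1]$, and the only singularity of the integrand comes from the $u$-independent-of-$(x,v)$ factor $u^{-s}(1-u)^{-s}$, which is integrable precisely because $s<1$. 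Differentiating under the integral with respect to any vector fields on $\partial SM$ only produces further smooth factors, so by dominated convergence $\widetilde H \in C^\infty(\partial SM,\C^m)$.

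Now I would invoke the deck-transformation symmetry $\Upsilon\circ\Gamma = \Upsilon$, i.e.\ $\Upsilon(\alpha(x,v),u) = \Upsilon(x,v,1-u)$, together with $F(\alpha(x,v),u)=F(x,v,1-u)$ from item (1) of Proposition \ref{prop:Ffunc}. The change of variable $u\mapsto 1-u$ in the integral defining $\widetilde H(\alpha(x,v))$ then yields $\widetilde H\circ\alpha = \widetilde H$ on $\partial SM$. Consequently the extension $A_+ H$ of $H = \widetilde H|_{\partial_+ SM}$ to $\partial SM$ agrees with $\widetilde H$ pointwise, and is therefore smooth on all of $\partial SM$. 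By the characterization of $C^\infty_\alpha(\partial_+ SM)$ recalled before the statement, $H \in C^\infty_\alpha(\partial_+ SM,\C^m)$, completing the proof.

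The only serious point to check carefully is the integrability and smoothness of $\widetilde H$ across the corner $\partial_0 SM\times\{0,1\}$: there $\tau\to 0$ and one has to be sure that $F$ stays bounded away from $0$ (which follows from its value $\Pi_x(v,v)>0$ on $\partial_0 SM\times[0,1]$) and that the $u$-weight $u^{-s}(1-u)^{-s}$ is the only source of singularity; both are already contained in Proposition \ref{prop:Ffunc} and the hypothesis $s<1$, so no further analytic estimate is needed. The rest is bookkeeping with smooth functions on $\partial SM\times[0,1]$.
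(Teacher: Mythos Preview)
Your proof is correct and follows essentially the same route as the paper's own argument: the change of variable $t=u\tau(x,v)$, the factorisation $\II^w(\rho^{-s}g)=\tau^{1-2s}H$ via the identity $\rho\circ\pi\circ\Upsilon=\tau^{2}u(1-u)F$, the extension of the integrand to a smooth function on $\partial SM\times[0,1]$ using Proposition~\ref{prop:Ffunc}, and the $\alpha$-invariance of the extended integral via the deck-transformation symmetry $\Upsilon\circ\Gamma=\Upsilon$ and $F(\alpha(x,v),u)=F(x,v,1-u)$. You are in fact slightly more explicit than the paper about why $s<1$ is needed (integrability of $u^{-s}(1-u)^{-s}$) and about differentiation under the integral, but these are exactly the details the paper leaves to the reader.
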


\begin{proof}
    Given $f\in C^\infty(SM)$ and the function $F$ defined in \eqref{eq:F}, we consider the change of variable $t = \tau(x,v) u$, so that we may rewrite
    \begin{align*}
	I^w (\rho^{-s} f) (x,v) &= \tau(x,v)^{1-2s} \int_0^1 w(\Upsilon (x,v,u)) f(\Upsilon (x,v,u)) F^{-s}(x,v,u) \frac{du}{(u(1-u))^s}, \\
	&= \tau(x,v)^{1-2s} g(x,v),
    \end{align*}
    where
    \[g(x,v):=\int_0^1 w(\Upsilon (x,v,u)) f(\Upsilon(x,v,u)) F^{-s}(x,v,u) \frac{du}{(u(1-u))^s}\]
    and $\Upsilon$ is the map defined in \eqref{eq:Phi}. 
    
    All functions of $(x,v,u)$ involved in the definition of $g$ are defined and smooth for $(x,v)\in \partial SM$ (non-integer powers of $F$ are well-defined and smooth since $F$ is positive everywhere), and thus we may think of $g$ as $\tilde g|_{\partial_+ SM}$ for some $\tilde g$ whose definition is the same as above, but extended to $\partial SM$. 
    Since all the functions participating in the definition of $\tilde{g}$ satisfy the property $q(\alpha(x,v),u) = q(x,v,1-u)$, we have $\tilde g\circ \alpha = \tilde g = A_+ g$, and $\tilde g$ is smooth on $\partial SM$. In particular, the function $g$ belongs to $C_\alpha^\infty(\partial_+ SM, \C^m)$, which completes the proof. 
\end{proof}

The case of interest to us is when $s=0$, for which we obtain
\begin{align*}
    \II^w\colon C^\infty(SM,\C^p) \to \tau C^\infty_{\alpha} (\partial_+ SM,\C^m),
\end{align*}
and for $w\equiv 1$ and $m=p$, we will denote $\II^w = \II$. 

On to the attenuated X-ray transform $I_\Theta$ with $p=m$ and $\Theta\in C^\infty(M,\mathfrak{u}(m))$: fixing a smooth integrating factor $R\colon SM \to U(m)$ solution of $XR + \Theta R = 0$, we can write $I_\Theta f$ as 
\begin{align}
    I_\Theta f (x,v) = R (x,v) \II (R^{-1} f) (x,v), \qquad (x,v) \in \partial_+ SM.
    \label{eq:Rform}
\end{align}
In the functional setting \eqref{eq:funcsetting}, we then compute the adjoint: 
\begin{align*}
    (I_\Theta f, h)_{\frac{\mu}{\tau}} &= \int_{\partial_+ SM} \left\langle R(x,v) \II(R^{-1} f(x,v)), h(x,v)\right\rangle_{\C^m} \frac{\mu}{\tau}\ d\Sigma^{2d-2} \\
    &= \int_{\partial_+ SM} \left\langle \II(R^{-1} f) (x,v), \frac{1}{\tau} R^*(x,v) h(x,v)\right\rangle_{\C^m} \mu d\Sigma^{2d-2} \\
    &\stackrel{(s)}{=} \int_{SM} \left\langle R^{-1} f, \left( \frac{1}{\tau} R^* h\right) \circ \psi  \right\rangle_{\C^m}\ d\Sigma^{2d-1} \\
    &= \int_{M} \left\langle f(x), \int_{S_x} \left( (R^{-1})^*(x,v) \left( \frac{1}{\tau} R^* h\right) \circ \psi(x,v)  \right)\ dS_{x}(v) \right\rangle_{\C^m}\ dV^{d}(x),
\end{align*}
where Santal\'o's formula was used at step $(s)$. Note that we have used that the (componentwise) adjoint of $\II\colon L^2(SM) \to L^2(\partial_+ SM, \frac{\mu}{\tau} d\Sigma^{2d-2})$ is given by $\II^* h(x) = \frac{h}{\tau} (\psi(x,v))$, where $\psi\colon SM\to \partial_+ SM$ denotes the footpoint map, defined by $\psi(x,v) = \varphi_{-\tau(x,-v)}(x,v)$. 
This implies the following expression for the adjoint: 
\begin{align}
    I_\Theta^* h (x) = \int_{S_x} \left( (R^{-1})^*(x,v) \left( \frac{1}{\tau} R^* h\right) \circ \psi(x,v)  \right)\ dS_{x}(v).
    \label{eq:adjoint}
\end{align}

Notice that since $\Theta$ is skew-hermitian, we also have the pointwise relation $(R^{-1})^* (x,v) = R(x,v)$. We are now ready to compute associated normal operator $I_\Theta^* I_\Theta$:
\begin{align}
    I_\Theta^* I_\Theta f(x) &= \int_{S_x}  R(x,v) \left( \frac{1}{\tau} R^* I_\Theta f\right) \circ \psi(x,v)  \ dS_{x}(v) \nonumber\\
    &= \int_{S_x} R(x,v) \left( \frac{1}{\tau} R^* R \II(R^{-1} f)\right) \circ \psi(x,v) \ dS_{x}(v) \nonumber \\
    &= \int_{S_x} R(x,v) \left( \frac{1}{\tau} \II(R^{-1} f)\right) \circ \psi(x,v) \ dS_{x}(v),
    \label{eq:Nexpr}
\end{align}
where we have used that $R^* R = id_{m\times m}$ pointwise. We can now prove Theorem \ref{thm:forwardmapping}.

\begin{proof}[Proof of Theorem \ref{thm:forwardmapping}]
    Take $f$ smooth on $M$, then $R^{-1} f$ is smooth on $SM$, then by Proposition \ref{prop:mappinggeneral}, $\frac{1}{\tau} \II(R^{-1} f) \in C^{\infty}_\alpha(\partial_+ SM, \C^m)$. In particular, $\left( \frac{1}{\tau} \II(R^{-1} f)\right) \circ \psi(x,v)$ is smooth on $SM$, and so is its product with $R(x,v)$. Since $I_\Theta^* I_\Theta f$ is the fiberwise average of the latter product, it is smooth on $M$ as well. Theorem \ref{thm:forwardmapping} is proved. 
\end{proof}

We finally make the adjustments needed to incorporate restrictions to certain Lie-algebra valued elements, proving Theorem \ref{thm:forwardmappingFisher}.

\begin{proof}[Proof of Theorem \ref{thm:forwardmappingFisher}]   
    The proof of (1) follows directly from Theorem \ref{thm:forwardmapping} and the fact that when $\Phi\in C^\infty(M, \Cm^{n\times n})$, then $\Theta(\Phi,\Phi)$ is a smooth matrix field on $\Cm^{n\times n}$.

    On to the proof of (2), suppose that $\Phi$ is $\mathfrak{g}$-valued. Equation \eqref{eq:formulaendo} allows us to write
    \[I_{\Theta(\Phi,\Phi)}(f)=\int_{0}^{\tau}u^{-1}_{\Phi}f u_{\Phi}\,dt=\int_{0}^{\tau}\text{Ad}_{u_{\Phi}^{-1}}(f)\,dt\]
    where $\text{Ad}_{g}(f)=gfg^{-1}$ is the Adjoint representation. The map $I_{\Theta(\Phi,\Phi)}^*$ can be easily computed using \eqref{eq:adjoint} to obtain
    \[I_{\Theta(\Phi,\Phi)}^*(h)=\int_{S_{x}}\text{Ad}_{u_{\Phi}}((h/\tau)^{\sharp})(x,v)\,dS_{x}.\]
    But the Adjoint representation preserves $\mathfrak{g}$ and thus $\mathbb{N}_{\Phi}$ maps $C^{\infty}(M,\mathfrak{g})$ into itself. 
    In fact, since $\text{Ad}_{g}$ for $g\in G\subset U(n)$ is unitary with respect to the Frobenius inner product
    we may $F$-orthogonally split $\C^{n\times n}=\mathfrak{g}\oplus \mathfrak{g}^{\perp}$ and from the expressions above we see that also
    \begin{align*}
	\mathbb{N}_{\Phi}: C^{\infty}(M,\mathfrak{g}^{\perp})\to C^{\infty}(M,\mathfrak{g}^{\perp}).	
    \end{align*}
\end{proof}

\subsection{Isomorphism property - proof of Theorem \ref{thm:isomorphism}} \label{sec:isomorphism}

Let us denote $N_\Theta := I_\Theta^* I_\Theta$. As previously pointed out in Remark \ref{remark:themuaffair}, unlike the case where $L^2_\mu$ (for $\mu$ the symplectic measure from Sec. \ref{ssec:linearization}) is chosen as co-domain for $I_\Theta$, $N_\Theta$ is a pseudo-differential operator on $M^{\text{int}}$ which does {\bf not} extend to any simple neighbourhood of $M$. Understanding such an operator will require taking care of interior and boundary behavior separately. The interior behavior is well-known and holds in a broad range of cases, while the boundary behavior makes use of the recent results of \cite{M}. The range of applicability of \cite{M} is geodesic disks of constant curvature, and although what follows could apply to this class of surfaces, we will restrict to the Euclidean disk for simplicity.

\subsubsection{Interior behavior} In the interior, we now show that $N_\Theta$ is a classical elliptic $\Psi$DO of order $-1$, and this actually holds for any simple manifold of dimension $d\ge 2$. Indeed, from the above calculation \eqref{eq:Nexpr}, we first write
\begin{align*}
    N_\Theta f(x) = \int_{S_x} \int_0^{\tau(x,v)} {\mathcal N}_\Theta (x,\exp_x(tv)) f(\exp_x(tv)) j(x,v,t)\ dt\ dS_{x}(v),
\end{align*}
where 
\begin{align}
    {\mathcal N}_\Theta(x,\exp_x(tv)) := \frac{R(x,v) R^{-1} (\varphi_t(x,v))+R(x,-v) R^{-1} (\varphi_{-t}(x,-v)}{\tau(\psi(x,v)) j(x,v,t)},  
    \label{eq:SchwartzN}
\end{align}
and $j(x,v,t)$ denotes the Jacobian of the exponential map $S_x \times (0,\epsilon) \ni (v,t) \to \exp_x (tv) \in M$. The Schwarz kernel of $N_\Theta$ is then ${\mathcal N}_\Theta (x,y)$. Expansions for small $t$ give
\begin{align*}
    \frac{1}{j(x,v,t)} = t^{-d+1} + O(t^{-d+2}), \qquad R(x,v)R^{-1}(\varphi_t(x,v)) = id_{N\times N} + t \Theta(x) + O(t^2),
\end{align*}
and thus the part of the Schwarz kernel that contributes to the principal symbol is given by 
\begin{align*}
    \frac{2}{d_g(x,y)^{d-1} \ell(x,y)} id_{N\times N},
\end{align*}
where $\ell$ denotes the length of the maximal geodesic passing through $(x,y)$. 

\subsubsection{Boundary behavior} We now focus on the case of the {\bf Euclidean disk}, where $g=e$, $d=2$ and the geodesic flow takes the form $\varphi_t(x,v) = (x+tv,v)$. We now recall the theory described in the case $\Theta = 0$, as outlined in \cite{M}. Consider $x = (\rho\cos\omega, \rho\sin\omega)$ polar coordinates on the unit disk, and define\footnote{The $4\pi$ factor is not directly incorporated in the definition of $\mL$ in \cite{M}, though it helps avoid a proliferation of constants here, and only changes the results of \cite{M} by powers of $4\pi$.} the unbounded operator
\begin{align*}
    \mL := (4\pi)^{-2} [- \left( (1-\rho^2) \partial_\rho^2 + (\rho^{-1} - 3\rho) \partial_\rho + \rho^{-2} \partial_\omega^2 \right) + 1],
\end{align*}
with domain $C^\infty(M)$. Then $\mL$ is essentially self-adjoint on $L^2(M)$ with known (pure point) spectral decomposition
\begin{align*}
    \{Z_{n,k},\ n\in \Nm_0,\ 0\le k\le n\}, \qquad \lambda_{n} = (4\pi)^{-2} (n+1)^{2}. 
\end{align*}
The eigenfunctions are (Zernike) polynomials, hence smooth on $M$. We then define the Hilbert scale $\{\wtH^s(M)\}_{s\ge 0}$ by 
\begin{align}
    \wtH^s = \wtH^s(M) := \left\{ f = \sum_{n,k} f_{n,k} \widehat{Z_{n,k}},\quad (4\pi)^{-2s}\sum_{n=0}^\infty (n+1)^{2s} \sum_{k=0}^n |f_{n,k}|^2 < \infty \right\},
    \label{eq:wtH0}
\end{align}
where the hat denotes $L^2$-normalization. It is then proved in \cite[Lemma 3]{M} that $\cap_{s\ge 0} \wtH^s = C^\infty(M)$. Moreover, following \cite[Lemmas 13-14]{M}, there exists $\alpha>3/2$ and $\ell>2$ such that for any $u\in C^\infty(M)$ and $s\in \Nm_0$, we have 
\begin{align}
    \|u\|_{\wtH^{2s}} = \|\mL^s u\|_{L^2(M)} \lesssim \|u\|_{C^{2s}} \lesssim \|u\|_{\wtH^{\alpha + 2s\ell}},  
    \label{eq:nottame}
\end{align}
where for $k\in \Nm_0$, we define the $C^k$ norm $\|u\|_{C^k} = \sup_{x\in M} \sum_{|\alpha|\le k} |\partial^\alpha u(x)|$. Therefore, the topological dual of $C^{\infty}(M)$ equipped with the family of semi-norms $\{\|\cdot\|_{\wtH^s}\}_{s\in \Nm_0}$ coincides with that of $C^\infty(M)$ equipped with the family of $C^k(M)$ norms, the latter being the space of {\em supported distributions} $\dot C^{-\infty}(M)$. 

As a result, $\mL$ can be extended by duality to $\dot C^{-\infty} (M)$ through the pairing $\langle \mL u, \phi \rangle := \langle u, \mL \phi\rangle$ (if by $\langle\cdot,\cdot\rangle$ we denote the $(\dot C^{-\infty}(M), C^\infty(M))$ pairing). An element $u\in \dot C^{-\infty} (M)$ will be said to be in $L^2(M)$ if there exists a constant $C$ such that for any $\phi\in C^\infty(M)$, $|\langle u, \phi \rangle| \le C \|\phi\|_{L^2(M)}$. Definition \eqref{eq:wtH0} may then be extended to $s\in \R$, and each space can be identified as 
\begin{align}
    \wtH^s = \{ u\in \dot C^{-\infty}(M), \quad \mL^{s/2} u\in L^2 \}, \qquad \|u\|_{\wtH^s} := \|\mL^{s/2}u\|_{L^2}.
    \label{eq:wtH}
\end{align}

As this Sobolev scale is not the classical one (it is modeled after an elliptic operator whose ellipticity degenerates at the boundary), we state a few facts which are reminiscent of the traditional scales: 

\begin{Lemma} \label{lem:scale}
    The scale $\{\wtH^s\}_{s\in \R}$ satisfies the following:
    \begin{enumerate}
	\item[(a)] Using $L^2$ as pivot space, for every $s\ge 0$, we have $(\wtH^s)' = \wtH^{-s}$.
	\item[(b)] For any $s,t \in \R$ such that $t<s$, the injection $\wtH^s\subset \wtH^t$ is compact. 
	\item[(c)] For any $0\le s<t$ and $\theta\in [0,1]$, we have $[\wtH^t, \wtH^s]_\theta = \wtH^{\theta s + (1-\theta)t}$.     
    \end{enumerate}
\end{Lemma}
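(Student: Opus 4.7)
All three parts follow from treating $\mL$ as a positive self-adjoint unbounded operator on $L^2(M)$ with discrete spectrum $\{\lambda_n = (4\pi)^{-2}(n+1)^2\}_{n\in\Nm_0}$ and complete orthonormal eigenbasis $\{\widehat{Z_{n,k}}\}$. The functional calculus then makes $\mL^{s/2}\colon \wtH^s \to L^2(M)$ a unitary isomorphism for every $s\in \R$, so the whole scale is unitarily equivalent to the weighted $\ell^2$ spaces with weights $(4\pi)^{-s}(n+1)^s$. Each statement of the lemma reduces in this diagonalised picture to an elementary fact about diagonal operators between weighted $\ell^2$ spaces, so my strategy is to set up the diagonalisation once and then unwind each claim.

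For (a), I would first check that any $u\in \wtH^{-s}$ induces a bounded functional on $\wtH^s$ through $\langle u,\phi\rangle = \langle \mL^{-s/2} u, \mL^{s/2}\phi\rangle_{L^2}$, an expression that makes sense because $\mL^{-s/2}u\in L^2(M)$ by definition of $\wtH^{-s}$, and whose norm is bounded by $\|u\|_{\wtH^{-s}}$ via Cauchy--Schwarz. Identification of this expression with the $(\dot C^{-\infty}(M),C^\infty(M))$ pairing used to define $\wtH^{-s}$ in \eqref{eq:wtH} is obtained by verifying it first on finite linear combinations of the $\widehat{Z_{n,k}}$, where self-adjointness of $\mL$ and its powers is immediate, and then extending by density of such combinations in every $\wtH^s$. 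Conversely, for the surjectivity of the duality pairing I would invoke Riesz representation in the Hilbert space $\wtH^s$: any $\ell\in (\wtH^s)'$ is of the form $\ell(\phi)=(v,\phi)_{\wtH^s}=(\mL^{s/2}v,\mL^{s/2}\phi)_{L^2}$, and rewriting this as $\langle \mL^s v,\phi\rangle$ shows that $u:=\mL^s v\in\wtH^{-s}$ represents $\ell$ with matching norm.

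For (b), in the diagonalised picture the inclusion $\wtH^s\hookrightarrow \wtH^t$ with $t<s$ is the diagonal operator on $\ell^2$ whose $(n,k)$-entry equals $(4\pi)^{s-t}(n+1)^{t-s}\to 0$ as $n\to\infty$; such a diagonal operator is the norm limit of its finite-rank truncations, hence compact. For (c), my plan is to apply the standard interpolation theorem for Hilbert scales generated by a positive self-adjoint operator, which yields $[\mathrm{Dom}(\mL^{t/2}),\mathrm{Dom}(\mL^{s/2})]_\theta = \mathrm{Dom}(\mL^{(\theta s+(1-\theta)t)/2})$ with equivalent norms, and then to translate this back through \eqref{eq:wtH}; alternatively one can compute the complex (or real) interpolation norm directly in the diagonalised picture, where the statement reduces to the classical interpolation identity for weighted $\ell^2$ spaces. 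None of these steps presents real difficulty; the only point meriting mild care is the identification of the pairings in part (a) for negative $s$, which is where the density of finite sums of $\widehat{Z_{n,k}}$ in each $\wtH^s$ plays a role, everything else being routine spectral theory.
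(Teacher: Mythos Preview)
Your proposal is correct and follows essentially the same route as the paper: both reduce the $\wtH^s$ scale, via the spectral decomposition of $\mL$, to a family of weighted $\ell^2$ spaces, and then read off (a)--(c) as elementary facts about diagonal operators and interpolation of such spaces (the paper cites \cite[Proposition 2.2]{T} for (c), which is exactly the domain-interpolation theorem you invoke). Your treatment of (a) is in fact more complete than the paper's, which only displays the Cauchy--Schwarz bound giving the embedding $\wtH^{-s}\hookrightarrow(\wtH^s)'$, whereas you also supply the surjectivity via Riesz representation and the identification $u=\mL^s v$.
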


\begin{proof} The definition \eqref{eq:wtH0} makes each $\wtH^s$ isomorphic to a weighted $\ell^2$ space. Then (a) follows directly from the fact that for any sequence of positive numbers $\{\lambda_n\}_n$, 
    \begin{align*}
	\sum_{n\in \Nm} u_n \overline{v_n} \le \left( \sum_{n\in\Nm} \lambda_n^2 u_n^2 \right)^{1/2} \left( \sum_{n\in\Nm} \lambda_n^{-2} v_n^2 \right)^{1/2}.
    \end{align*}
    Then (b) is an immediate consequence of the fact that for any sequence $\{\lambda_n\}_n$ decreasing to zero, the operator $T_\lambda\colon \ell^2\to \ell^2$ given by $\{u_j\}_{j\in \Nm} \mapsto \{\lambda_j u_j\}_{j\in \Nm}$ is compact. 

    Finally, (c) follows readily from the general complex interpolation result \cite[Proposition 2.2]{T}, bearing in mind that $\wtH^s$ is nothing but the domain space ${\mathcal D}(\mL^{s/2})$.
\end{proof}

Furthermore, we have that for any $s\in \R$ and any $u\in \wtH^s$, $\|N_0 u\|_{\wtH^{s+1}} = \|u\|_{\wtH^s}$. Moreover, the following identity is given in \cite[Theorem 11]{M}
\begin{align}
    \mL N_0^2 = id|_{C^\infty(M)},
    \label{eq:id}
\end{align}
and this equality extends to $\dot C^{-\infty}(M)$ by density. Therefore, $N_0$ is an isomorphism of $C^\infty(M)$ (in fact, a bijection of $\dot C^{-\infty}(M)$), and the work below will imply that this remains true for $N_\Phi$, by showing that $N_\Phi$ is a relatively compact perturbation of $N_0$ on the $\wtH^s$ scale.  

Morally, the $\wtH^s$ scale behaves like the usual Sobolev scale in the interior of $M$ (while allowing for faster radial oscillations near the boundary). This is summarized in Lemma \ref{lem:scale_equiv} below, in stark contrast with \eqref{eq:nottame}. Here and below, we write $U\Subset M^{int}$ for a set $U$ which is relatively compact in $M^{int}$. If $U$ is open, we have the natural operators of extension-by-zero $e_U\colon C^\infty_c(U)\to C^\infty(M)$ and restriction $r_U \colon C^\infty(M) \to C^\infty(U)$, which extend by duality to $e_U = r_U^t\colon {\mathcal E}'(U)\to \dot C^{-\infty}(M)$ and $r_U = e_U^t \colon \dot C^{-\infty}(M) \to \mD'(U)$. We also have $r_U e_U = id|_{ {\mathcal E}'(U)}$, and $\mL e_U = e_U \mL$ (where $\mL$, being a differential operator, will be viewed either as continuous on ${\mathcal E}'(U)\to {\mathcal E}'(U)$ or $\dot C^{-\infty}(M) \to \dot C^{-\infty}(M)$). 

\begin{Lemma} \label{lem:scale_equiv}
    Fix an open set $U\Subset M^{int}$ and an integer $p\ge 0$. Then for any $u\in {\mathcal E}'(U)$, we have that $u\in H^{2p}(U)$ if and only if $e_U u \in \wtH^{2p}$. Moreover there exist constants $C_1(U,p)$ and $C_2(U,p)$ such that 
    \begin{align}
	C_1 \|u\|_{H^{2p}(U)} \le \|e_U u\|_{\wtH^{2p}} \le C_2 \|u\|_{H^{2p}(U)}, \qquad \forall u\in H^{2p}(U) \cap {\mathcal E}'(U).
    \end{align}    
\end{Lemma}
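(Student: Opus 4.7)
The plan is to exploit the fact that on any open set $U'$ with $U\Subset U'\Subset M^{int}$, the operator $\mL$ is a strictly elliptic second-order linear differential operator with smooth coefficients, since its leading coefficient $1-\rho^{2}$ only vanishes at $\rho=1$. With this, both inequalities reduce to standard interior elliptic regularity, combined with strict positivity of $\mL$ as a self-adjoint operator on $L^{2}(M)$: its smallest eigenvalue is $(4\pi)^{-2}>0$, so $\|v\|_{L^{2}}\le (4\pi)^{2}\|\mL v\|_{L^{2}}$ on its domain.

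The upper bound is immediate. Since $\mL$ is a differential operator, it commutes with extension by zero from $U$ to $M$ when applied to distributions compactly supported in $U$, so $\mL^{p}(e_{U}u)=e_{U}(\mL^{p}u)$; therefore $\|e_{U}u\|_{\wtH^{2p}}=\|\mL^{p}u\|_{L^{2}(U)}\lesssim \|u\|_{H^{2p}(U)}$, with a constant depending only on the $C^{0}(\overline{U})$ bounds of the coefficients of $\mL^{p}$.

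For the lower bound I would induct on $p$, the case $p=0$ being the identity $\wtH^{0}=L^{2}(M)$. Assuming the statement at level $p-1$, let $u\in \mathcal{E}'(U)$ with $e_{U}u\in \wtH^{2p}$. First, positivity of $\mL$ gives $\|e_{U}u\|_{\wtH^{2p-2}}\le (4\pi)^{2}\|e_{U}u\|_{\wtH^{2p}}$, so the inductive hypothesis applied to $u$ yields $u\in H^{2p-2}(U)$ with the right bound. Second, the distribution $v:=\mL u$ also lies in $\mathcal{E}'(U)$, satisfies $e_{U}v=\mL(e_{U}u)$, and hence $\mL^{p-1}(e_{U}v)=\mL^{p}(e_{U}u)\in L^{2}(M)$; applying the inductive hypothesis to $v$ therefore gives $\mL u\in H^{2p-2}(U)$ with $\|\mL u\|_{H^{2p-2}(U)}\lesssim \|e_{U}u\|_{\wtH^{2p}}$. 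Finally, standard interior elliptic regularity for the uniformly elliptic operator $\mL$ on $U'$ promotes $u$ from $H^{2p-2}$ to $H^{2p}$ locally and supplies $\|u\|_{H^{2p}(V_{1})}\le C\bigl(\|\mL u\|_{H^{2p-2}(V_{2})}+\|u\|_{H^{2p-2}(V_{2})}\bigr)$ for any nested open sets $\mathrm{supp}(u)\subset V_{1}\Subset V_{2}\Subset U'$.

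The one bookkeeping point deserving care is that this last estimate only controls $u$ on $V_{1}\subsetneq U'$, whereas we want control on $U$. This is harmless because all distributions involved are supported in a fixed compact $K\Subset U$: choosing $K\subset V_{1}\Subset V_{2}\subset U$, one has $\|u\|_{H^{2p}(V_{1})}=\|u\|_{H^{2p}(U)}$, and similarly for the lower-order terms on the right. The main obstacle, if any, is purely organizational --- keeping track of the dependence of constants on $U$ through the chain of nested neighborhoods and the induction on $p$; no analytical input is needed beyond standard interior regularity for uniformly elliptic operators and the spectral gap of $\mL$.
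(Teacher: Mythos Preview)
Your argument is correct and takes a genuinely different route from the paper. The paper exploits the explicit identity $\mL N_0^2 = \mathrm{id}$: writing $u = r_U N_0^{2p} e_U (\mL^p u)$, it reduces the lower bound to showing that $r_U N_0^{2p} e_U\colon L^2(U)\to H^{2p}(U)$ is bounded, which follows from the $\Psi$DO calculus since this is a properly supported operator of order $-2p$ on a slightly larger $U'\Subset M^{int}$. Your approach instead avoids $N_0$ and the $\Psi$DO calculus entirely, using only that $\mL$ is uniformly elliptic with smooth coefficients on compact subsets of $M^{int}$, the spectral gap $\lambda_0=(4\pi)^{-2}>0$, and induction on $p$. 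This is more elementary and would work for any strictly positive self-adjoint differential operator whose ellipticity degenerates only at $\partial M$; the paper's proof, by contrast, ties the estimate directly to the operator $N_0$ that drives the rest of Section~\ref{sec:isomorphism}.

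One organizational point deserves tightening: you choose $\mathrm{supp}(u)\subset V_1\Subset V_2\subset U$, but since $\mathrm{supp}(u)$ can be an arbitrary compact subset of $U$, no single such pair $(V_1,V_2)$ works for all $u\in\mathcal E'(U)$, and the elliptic-regularity constant depends on $(V_1,V_2)$. The clean fix is to go outward rather than inward: pick $U'$ with $U\Subset U'\Subset M^{int}$ once and for all, extend $u$ by zero to $U'$, and apply interior regularity with $V_1=U$, $V_2=U'$. Since $u$ and $\mL u$ are supported in $U$, their $H^{2p-2}$ norms over $U$ and $U'$ coincide, and the resulting constant depends only on $(U,U',p)$. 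With this adjustment the induction closes with constants depending only on $(U,p)$ as required.
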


\begin{proof}   
    We then have 
    \begin{align*}
	\|e_U u\|_{\wtH^{2p}} = \|\mL^p e_U u\|_{L^2(M)} = \|\mL^p u\|_{L^2(U)} \le C \|u\|_{H^{2p}(U)},
    \end{align*}
    where the last inequality comes from the fact that $\mL^p$ is a differential operator of order $2p$. For the other inequality, notice that for any $u\in {\mathcal E}'(U)$ and any $p\in \Nm_0$, we have $e_U u = N_0^{2p} \mL^p e_U u$, and upon applying $r_U$ we obtain $u =  r_U N_0^{2p} e_U \mL^p u$. We now claim that there is a constant such that 
    \begin{align}
	\|r_U N_0^{2p} e_U v\|_{H^{2p}(U)}\le \|v\|_{L^2(U)}, \qquad \forall v\in L^2(U).
	\label{eq:claim}
    \end{align}
    In that case, we write
    \begin{align*}
	\|u\|_{H^{2p}(U)} &= \|r_U N_0^{2p} e_U \mL^p u\|_{H^{2p}(U)} \\
	&\le C \|\mL^p u\|_{L^2(U)} = C \|\mL^p e_U u\|_{L^2(M)} = C\|e_U u\|_{\wtH^{2p}},
    \end{align*}
    completing the proof of the lemma. 
    
    To prove \eqref{eq:claim}: given $U'$ an open set such that $U\Subset U'\Subset M^{int}$, define $e_{U,U'} \colon {\mathcal E}'(U) \to {\mathcal E}'(U')$ and $r_{U',U} \colon \mD'(U')\to \mD'(U)$ the operators of extension by zero and restriction. With $\chi\in C_c^\infty(U')$ equal to $1$ in a neighborhood of $U$, the operators $r_U N_0^{2p} e_U$ and $r_{U',U} \chi r_{U'} N_0^{2p} e_{U'} \chi e_{U,U'}$ agree. The operator $\chi r_{U'} N_0^{2p} e_{U'} \chi$ is a properly supported element of $\Psi^{-2p} (U')$ and thus by \cite[Theorem 4.7]{GS}, 
    \begin{align*}
	\chi r_{U'} N_0^{2p} e_{U'} \chi \colon L^2_{loc}(U') \to H^{2p}_{loc} (U')
    \end{align*}
    is continuous. In particular, there exists $U''\Subset U'$ and a constant $C$ such that
    \begin{align*}
	\| r_{U',U} \chi r_{U'} N_0^{2p} e_{U'} \chi w \|_{H^{2p} (U)} \le C \|r_{U',U''} w\|_{L^{2}(U'')}, \qquad \forall w\in {\mathcal E}'(U'). 
    \end{align*}
    Applying this inequality to $w = e_{U,U'} v$ for some $v\in {\mathcal E}'(U)$ yields the result.     
\end{proof}

Everything we have done in this section so far generalizes straighforwardly to $\mathbb{C}^N$-valued functions. We may define $\wtH^s(M; \mathbb{C}^N)$ as in \eqref{eq:wtH0} by making the coefficients $f_{n,k}$ to be valued in $\mathbb{C}^N$ with $|f_{n,k}|^2$ the standard Euclidean norm. This scale corresponds to a Sobolev scale with respect to $\mL$ acting on each scalar component. Now denoting $\wtH^s = \wtH^s(M; \mathbb{C}^N)$, Lemmas \ref{lem:scale} and \ref{lem:scale_equiv} still hold true with minor modifications. We now turn to the study of $N_\Theta$, and write $N_\Theta = N_0 + K_\Theta$, where the 'unattenuated' normal operator $N_0$ is thought of as acting diagonally on each component of a $\mathbb{C}^N$-valued function.

\begin{Lemma}\label{lem:PsiDO} For any open set $U\Subset M^{int}$, the following hold. 
    
    (i) The operator $r_U N_\Theta e_U$ is an elliptic element of $\Psi^{-1}(U)$. 
    
    (ii) The operator $r_U K_\Theta e_U $ belongs to $\Psi^{-2}(U)$.
\end{Lemma}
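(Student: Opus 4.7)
The plan is to read off the $\Psi$DO structure for both parts directly from the Schwartz kernel formula \eqref{eq:SchwartzN}. On the Euclidean disk, writing $y = x + tv$ with $v = (y-x)/|y-x|$ and $t = |y-x|$, the Jacobian is $j(x,v,t) = t$ and the chord length $\tau(\psi(x,v))$ defines a smooth positive function $L(x,v)$, bounded below for $x\in \overline U$ since $U\Subset M^{int}$. The kernel on $U\times U$ thus reads
\[
\mathcal{N}_\Theta(x,y) = \frac{R(x,v)\,R^{-1}(\varphi_t(x,v))}{L(x,v)\,|x-y|},
\]
and the matrix factor admits the Taylor expansion $R(x,v)R^{-1}(\varphi_t(x,v)) = \id + t\,\Theta(x) + O(t^2)$ jointly smooth in $(x,v,t)$.

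For (i), substituting this expansion isolates a leading term $\frac{\id}{L(x,v)\,|x-y|}$ which is homogeneous of degree $-1$ in $y-x$ with smooth angular amplitude. A standard polar Fourier computation in $\R^2$ identifies this as the Schwartz kernel of a classical $\Psi$DO of order $-1$ on $U$, with principal symbol a strictly positive smooth multiple of $|\xi|^{-1}\cdot\id$; in particular, ellipticity follows from positivity of $1/L(x,v)$. The remaining Taylor terms $|x-y|^{k-1} A_k(x,v)/L(x,v)$ for $k\ge 1$ contribute classical symbol components of orders $\le -2$, which assemble into the full classical symbol of $r_U N_\Theta e_U$. Alternatively, one may appeal to the known fact that normal operators of attenuated X-ray transforms on simple manifolds are classical elliptic $\Psi$DOs of order $-1$ in the interior (cf.~Stefanov--Uhlmann).

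For (ii), the Schwartz kernel of $K_\Theta = N_\Theta - N_0$ is
\[
\mathcal{K}_\Theta(x,y) = \frac{R(x,v)R^{-1}(\varphi_t(x,v)) - \id}{L(x,v)\,|x-y|}.
\]
Since the numerator vanishes to first order in $t=|x-y|$, the kernel gains one extra power of $|x-y|$ compared to $\mathcal N_\Theta$; its leading on-diagonal behaviour is the bounded amplitude $\Theta(x)/L(x,v)$, homogeneous of degree $0$ in $y-x$. In dimension two, this identifies $r_U K_\Theta e_U$ as a classical $\Psi$DO of order $-2$ on $U$, with further Taylor terms contributing orders $\le -3$.

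I expect the main technical point to be the careful translation of the polar-form Taylor expansion of $\mathcal{N}_\Theta$ (resp.\ $\mathcal{K}_\Theta$) into a genuine classical symbol expansion on $T^*U$: each angular-polynomial piece $|x-y|^{k-1}\,h_k(x,(y-x)/|y-x|)$ must yield a classical symbol component of the stated order, with remainders that are honestly smoothing. This is standard microlocal analysis for geodesic normal operators, and the attenuated setting introduces no new difficulty because the integrating factor $R$ is jointly smooth and the ratio $R(x,v)R^{-1}(\varphi_t(x,v))$ reduces to $\id$ on the diagonal, which is precisely what allows the clean separation of $N_0$ from the perturbation $K_\Theta$.
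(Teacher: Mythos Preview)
Your proposal is correct and follows essentially the same route as the paper: both arguments read the $\Psi$DO structure directly from the Schwartz kernel \eqref{eq:SchwartzN}, Taylor-expand the integrating-factor product $R(x,v)R^{-1}(\varphi_t(x,v)) = \id + t\Theta(x) + O(t^2)$, identify the principal symbol as a positive multiple of $|\xi|^{-1}\id$ independent of $\Theta$, and conclude (ii) from that independence. The only difference is that where you describe the passage from the polar amplitude to a classical symbol expansion as ``standard microlocal analysis,'' the paper invokes a specific black-box result (Lemma~B.1 of \cite{DPSU}) which takes a smooth amplitude $A(x,v,t)$ and returns the full symbol expansion $\sigma_k(x,\xi) = \pi \frac{i^k}{k!} \int_{S_x} \partial_t^k A(x,v,0)\, \delta^{(k)}(\langle\xi,v\rangle)\,dS_x(v)$, making the argument somewhat cleaner.
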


\begin{proof} Fix an open set $U\Subset M^{int}$. For $f\in C_0(U)$ extended by zero outside of $U$, we may write
    \begin{align*}
	r_U N_\Theta e_U f(x) = \int_{S_x} \int_0^\infty A(x,v,t) f(x+tv)\ dt\ dS_{x}(v), \qquad x\in U,
    \end{align*}
    where $A(x,v,t) := \frac{R(x,v)R^{-1} (x+tv,v)+R(x,-v)R^{-1}(x+tv,-v)}{\tau(\psi(x,v))}  \chi(x+tv)$ for $(x,v,t) \in \mD_U$ with 
    \begin{align*}
	\mD_U := \{ (x,v,t),\ (x,v)\in SU,\ t\in \R \},
    \end{align*}
    and where $\chi\in C_c^\infty(M^{int})$ is equal to $1$ on $U$. Then $A\in C^\infty(\mD_U)$ and by \cite[Lemma B.1]{DPSU}, $r_U N_\Theta e_U$ is a classical $\Psi$DO of order $-1$ on $U$ with full symbol $\sigma(x,\xi)\sim \sum_{k=0}^\infty \sigma_k(x,\xi)$, where
    \begin{align*}
	\sigma_k(x,\xi) = \pi \frac{i^k}{k!} \int_{S_x U} \partial_t^k A(x,v,0) \delta^{(k)} (\langle\xi,v\rangle)\ dS_{x}(v).
    \end{align*}
    The principal symbol of $N_\Theta$ is thus given by 
    \begin{align*}
	\sigma_{0} (x,\xi) = 2\pi \int_{S_x} \frac{\delta (\langle \xi, v\rangle)}{\tau(x,v) + \tau(x,-v)}\ dS_{x}(v)\ id_{N\times N} = \frac{4\pi}{|\xi|} \frac{1}{\tau(x,\hat\xi_\perp) + \tau(x,-\hat\xi_\perp)}\ id_{N\times N}.
    \end{align*}
    We also notice that $\sigma_0$ actually does not depend on $\Theta$, in other words, $r_U K_\Theta e_U = r_U (N_\Theta-N_0)e_U \in \Psi^{-2} (U)$. Hence the result.  
\end{proof}

The next lemma is in essence the reason why $N_\Theta$ is a relatively compact perturbation of $N_0$ on the $\wtH^s$ scale.  
\begin{Lemma}\label{lem:compact}
    For any $s\ge 0$, the operators $\mL K_\Theta$ and $K_\Theta\mL$ are $\wtH^{s}\to \wtH^{s}$ bounded. 
\end{Lemma}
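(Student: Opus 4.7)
The approach is to prove the stronger statement that $K_\Theta \colon \wtH^s \to \wtH^{s+2}$ is bounded for every $s \in \R$, which implies the lemma by composing with the tautological bound $\mL \colon \wtH^{t} \to \wtH^{t-2}$ on either side. First I would reduce the parameter range: since $N_\Theta = I_\Theta^* I_\Theta$ and $N_0 = I_0^* I_0$ are both $L^2$-self-adjoint, so is $K_\Theta$; duality via Lemma \ref{lem:scale}(a) reduces negative $s$ to nonnegative $s$, and complex interpolation via Lemma \ref{lem:scale}(c) reduces further to even integers $s = 2k$, $k \in \Nm_0$.

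The heart of the argument is a cutoff decomposition exploiting that $\text{supp}(\Theta) \Subset M^{int}$. I would pick $\chi, \tilde\chi \in C_c^\infty(M^{int})$ with $\chi \equiv 1$ on an open neighborhood $V$ of $\text{supp}(\Theta)$ and $\tilde\chi \equiv 1$ on a neighborhood of $\text{supp}(\chi)$, and write
\[
K_\Theta \;=\; \tilde\chi\, K_\Theta\, \chi \;+\; (1-\tilde\chi)\, K_\Theta\, \chi \;+\; K_\Theta\, (1-\chi).
\]
The interior piece $\tilde\chi K_\Theta \chi$ has Schwartz kernel compactly supported in $M^{int} \times M^{int}$, so by Lemma \ref{lem:PsiDO}(ii) it is a properly supported element of $\Psi^{-2}(U)$ for any open $U$ with $\text{supp}(\tilde\chi) \cup \text{supp}(\chi) \Subset U \Subset M^{int}$, hence it maps $H^{2k}(U) \to H^{2k+2}(U)$ continuously. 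Lemma \ref{lem:scale_equiv} then converts this into the desired bound $\wtH^{2k} \to \wtH^{2k+2}$.

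For the two remaining pieces I would show that each has a smooth kernel on $M \times M$ with all derivatives bounded, and is therefore a smoothing operator that sends $\wtH^{2k}$ continuously into $C^\infty(M) \subset \wtH^{2k+2}$. The key observation is that the Schwartz kernel of $K_\Theta$ vanishes whenever the line segment $[x,y]$ does not meet $\text{supp}(\Theta)$, a direct consequence of the identity
\[
R(x,v)R^{-1}(\varphi_t(x,v)) - I \;=\; \int_0^t R(x,v)\, R^{-1}(\varphi_s(x,v))\, \Theta(\varphi_s(x,v))\, ds.
\]
In the second piece $(1-\tilde\chi) K_\Theta \chi$, the product of cutoffs in $(x,y)$ is by construction supported at positive distance from the diagonal. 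In the third piece $K_\Theta(1-\chi)$, the uniform lower bound $d_0 := \text{dist}(M \setminus V,\, \text{supp}(\Theta)) > 0$ guarantees that whenever $y \notin V$ and $|y-x| < d_0$ the segment $[x,y]$ misses $\text{supp}(\Theta)$, so the kernel vanishes in a full neighborhood of the diagonal. Off the diagonal, smoothness of the kernel up to $\partial M \times \partial M$ would follow from formula \eqref{eq:SchwartzN} combined with $R \in C^\infty(SM)$, smoothness of $\tau$ on $\partial_+ SM$, and smoothness of the geometric ingredients $(x,y) \mapsto v_y,\, |y-x|,\, \psi$.

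The main obstacle I anticipate is precisely this last global smoothness verification: one must confirm that when $x$ or $y$ lies on $\partial M$ the footpoint $\psi$ and exit time $\tau$ remain smooth as functions of $(x,y)$. Here the cutoff structure saves us, because the nonzero part of each boundary piece occurs only along chords that pass through $\text{supp}(\Theta) \Subset M^{int}$ and are therefore uniformly transverse to $\partial M$ and bounded away from the glancing set $\partial_0 SM$, where $\tau$ can fail to be smooth. Once this smoothness with uniform derivative bounds is established, the two boundary pieces are smoothing in the sense required, and the three contributions combine to give the desired bound $K_\Theta \colon \wtH^{2k} \to \wtH^{2k+2}$.
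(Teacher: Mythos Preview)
Your proposal is correct and follows essentially the same strategy as the paper: reduce to even integers by interpolation, split $K_\Theta$ into an interior piece handled by Lemma~\ref{lem:PsiDO}(ii) together with Lemma~\ref{lem:scale_equiv}, and boundary pieces with $C^\infty(\overline{M\times M})$ Schwartz kernel.

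The only structural difference is cosmetic: the paper uses a fine partition of unity $K_\Theta=\sum_{i,j}K_{ij}$ and sorts the pairs into Case~(I) (disjoint supports, off-diagonal smoothing) and Case~(II) (overlapping supports in $M^{int}$, $\Psi$DO calculus), whereas you use the more economical three-term split $\tilde\chi K_\Theta\chi+(1-\tilde\chi)K_\Theta\chi+K_\Theta(1-\chi)$. Your decomposition is cleaner and your justification of the boundary-piece smoothness is in fact more explicit: the paper asserts that the Case~(I) kernels are ``supported away from the diagonal and the corner of $M\times M$'', which is not literally true when both $U_i$ and $U_j$ meet $\partial M$, whereas you correctly identify the real reason for smoothness up to $\partial M\times\partial M$---namely that the only chords contributing to $\mK_\Theta$ pass through $\text{supp}(\Theta)\Subset M^{int}$ and are therefore uniformly transverse to $\partial M$, keeping $\tau(\psi(x,v))$ smooth. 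One small point to tighten: your duality step gives $K_\Theta\colon\wtH^t\to\wtH^{t+2}$ for $t\ge 0$ and $t\le -2$; to cover $-2<t<0$ (needed for $K_\Theta\mL$ on $\wtH^s$ with $0\le s<2$) you should interpolate between the endpoints $t=-2$ and $t=0$, which requires extending Lemma~\ref{lem:scale}(c) to negative indices---immediate from the same argument or by conjugating with powers of $\mL$.
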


\begin{proof} It is enough to prove boundedness for $s = 2p$ with $p\in \Nm_0$, and the general case follows from Lemma \ref{lem:scale}.(c) and the interpolation result \cite[Proposition 2.1]{T}. 

    An important observation is that since $\Theta$ is compactly supported inside $M^{int}$, there exists $\delta>0$ such that for any $x_0\in \partial M$, if $x,y\in B_\delta(x_0)\cap M$, then $\mK_\Theta (x,y) =0$. Indeed, if $\delta$ is so small that $B_{\delta}(x_0)$ does not intersect the support of $\Theta$, and by convexity of the set $B_\delta(x_0)\cap M$, the geodesic segment $[x,y]$ is completely included outside the support of $\Theta$, thus in \eqref{eq:SchwartzN}, writing $y = \text{Exp}_x (tv)$ for some $t,v$, we have that $R(x,v) R^{-1} (\varphi_t(x,v)) = id_{N\times N}$ and hence ${\mathcal N}_0 (x,y) = {\mathcal N}_\Phi (x,y)$ there. 

    Let us then cover $M$ by open balls $\{U_i\}_i$ of small enough diameter that if $U_i \cap U_j\ne \emptyset$ and if either intersects $\partial M$, then $U_i\cup U_j\subset B_\delta (x_0)$ for some $x_0 \in \partial M$. In this scenario, $\mK_\Phi (x,y) = 0$ for any $x\in U_i$ and $y \in U_j$. Consider $\{\psi_i\}_i$ a locally finite partition of unity subordinated to $\{U_i\}_i$, and write $K_\Theta = \sum_{i,j} K_{ij}$ with $K_{ij} (x,y) = \psi_i(x)K(x,y)\psi_j(y)$. Denote by $S_i\Subset U_i$ the support of $\psi_i$. By the comment above, $K_{ij}$ is trivial whenever $U_i\cap U_j\ne \emptyset$ and either set intersects $\partial M$ and we may assume that the non-trivial terms arise either from (I) $U_i\cap U_j = \emptyset$, or (II) $U_i\cap U_j \ne \emptyset$ and $U_i \cup U_j \Subset M^{int}$. 
    
    In case (I), then $\mK_{ij}, \mK_{ji} \in C^\infty (M\times M)$, since these are supported away from the diagonal and the corner of $M\times M$. In particular for any $p\in \Nm$, the Schwartz kernel of $\mL^p K_{ij}$ and $\mL^p K_{ji}$ belongs to $C^\infty (M\times M)$ as well as those of $K_{ji} \mL^p$ and $K_{ij} \mL^p$ by duality. Then for any $p,q$, the Schwartz kernel of $\mL^q K_{ij} \mL^p$ belongs to $C^\infty(M\times M)$, thus $\mL^q K_{ij} \mL^p$ is $L^2\to L^2$ bounded. In particular, $\mL^p K_{ij} \mL$ and $\mL^p \mL K_{ij}$ are $L^2\to L^2$ bounded, which is equivalent to $K_{ij}\mL$ and $\mL K_{ij}$ being $L^2\to \wtH^{2p}$ bounded, and in particular, $\wtH^{2p}\to \wtH^{2p}$ bounded.
    
    In case (II), take open sets $U, U'$ such that $S_i \cup S_j \subset U \Subset U' \Subset M^{int}$. Then from the composition calculus of $\Psi$DO's and Lemma \ref{lem:PsiDO}.(ii), $K_{ij} \mL$ and $\mL K_{ij}$ are properly supported elements of $\Psi^0(U')$, and thus by \cite[Theorem 4.7]{GS}, we have $\mL K_{ij}, K_{ij}\mL\colon H^s_{loc}(U')\to H^{s}_{loc}(U')$ for all $s$. In particular, there exists $V\Subset U'$ and a constant $C$ such that for every $v\in {\mathcal E}'(U)$, $\|\mL K_{ij} v\|_{H^{2p}(U)} \le C \|v\|_{H^{2p}(V)}$. Using Lemma \ref{lem:scale_equiv}, this gives 
    \begin{align*}
	\|e_U \mL K_{ij} v\|_{\wtH^{2p}} \lesssim \|\mL K_{ij} v\|_{H^{2p}(U)} \lesssim \|v\|_{H^{2p}(V)} \lesssim \|e_V v\|_{\wtH^{2p}},
    \end{align*}
    similarly for $K_{ij} \mL$.
    
    On to the proof, for $v\in \dot C^{-\infty}(M)$, we write $\mL K_{\Theta} v = \sum_{i,j} \mL K_{ij} v_j$, where $v_j = \chi_j v$ and where $\chi_j \in C_c^\infty(U_j)$ is equal to $1$ on $S_j$. Then 
    \begin{align*}
	\|\mL K_\Theta v\|_{\wtH^{2p}} \le \sum_{(I)} \|\mL K_{ij} v_j\|_{\wtH^{2p}} + \sum_{(II)} \|\mL K_{ij} v_j\|_{\wtH^{2p}}.
    \end{align*} 
    From the work above, each term involving $v_j$ is $\lesssim \|v_j\|_{\wtH^{2p}}$, which by Leibniz's rule is bounded by $C \|v\|_{\wtH^{2p}}$. The proof for $\mL K_\Theta$ is identical.
\end{proof}

Since $K_\Theta$ is $L^2\to L^2$ self-adjoint and $\mL$ is essentially $L^2\to L^2$ self-adjoint, the transpose of $\mL K_\Theta\colon \wtH^{s} \to \wtH^{s}$ is $K_\Theta \mL\colon \wtH^{-s}\to \wtH^{-s}$, and the transpose of $K_\Theta\mL\colon \wtH^{s} \to \wtH^{s}$ is $\mL K_\Theta \colon \wtH^{-s}\to \wtH^{-s}$, both of which are then bounded by virtue of Lemma \ref{lem:compact}. A consequence of the previous lemma is also that $K_\Theta = \mL^{-1} \circ \mL K_\Phi\colon \wtH^{s}\to \wtH^{s+2}$ is bounded for every $p\in \Nm_0$, and thus that $N_\Theta = N_0 + K_\Theta$ is $\wtH^{s}\to \wtH^{s+1}$ bounded for all $s\ge 0$. Dualizing, the operator $N_\Theta\colon \wtH^{-s-1}\to \wtH^{-s}$ is bounded for all $s\ge 0$. 

We now prove the main theorem of this section. 

\begin{theorem}\label{thm:iso} 
    For all $s\ge 0$, the operator $N_\Theta \colon \wtH^{s} \to \wtH^{s+1}$ is a Hilbert space isomorphism. As a consequence, the operator $N_\Theta\colon C^\infty(M,\mathbb{C}^N)\to C^\infty(M,\mathbb{C}^N)$ is a Fr\'echet space isomorphism. 
\end{theorem}

\begin{proof} 
    We know that $N_\Theta\colon L^2(M,\mathbb{C}^N) \to L^2(M,\mathbb{C}^N)$ is self-adjoint by construction, and injective \cite{PSUGAFA}, and in particular, injective on $\wtH^s$ for any $s\ge 0$. We now prove that this is also true for negative $s$. Indeed for $s<0$, if $u\in \wtH^{s}$ satisfies $0 = N_\Theta u = N_0 u + K_\Theta u$, composing with $\mL^{1/2}$, we obtain the equation $u = - \mL^{1/2} K_\Theta u$. Now from Lemma \ref{lem:compact}, we have that $\mL^{1/2} K_{\Theta} = \mL^{-1/2}\circ \mL K_\Theta \colon \wtH^t\to \wtH^{t+1}$ is continuous for all $t\in \R$, and thus by bootstrapping, $u\in C^\infty(M,\mathbb{C}^N)$. Finally by injectivity of $N_\Theta$ on $C^\infty(M,\mathbb{C}^N)$, we obtain that $N_\Theta$ is injective on $\wtH^{s}$ for any $s \in \R$.    

    On to the surjectivity, fix $s\ge 0$: given $f\in \wtH^{s+1}$, $u\in \wtH^{s}$ solves $N_\Theta u = f$ if and only if $u$ solves $N_0 u + K_\Theta u = f$. Upon composing by $\mL^{1/2}$, this is equivalent to solving for $u\in \wtH^{s}$
    \begin{align}
	u + \mL^{1/2} K_\Phi u = \mL^{1/2} f \in \wtH^{s}.
	\label{eq:fred}
    \end{align}
    As mentioned above the operator $\mL^{1/2} K_\Theta\colon\wtH^{s}\to \wtH^{s+1}$ is bounded, hence $\wtH^{s}\to \wtH^{s}$ compact. As a result, the bounded operator $Id + \mL^{1/2} K_\Theta = \mL^{1/2} N_\Theta\colon \wtH^s \to \wtH^s$ has closed range.     
    Finally, the Hilbert-space adjoint of $\mL^{1/2} N_\Theta\colon \wtH^{s}\to \wtH^{s}$ is $\mL^{-s} N_\Theta \mL^{1/2}\mL^{s}$ and thus, 
    \begin{align*}
	\ran \left(\mL^{1/2} N_\Theta|_{\wtH^s}^{\wtH^s}\right) = \overline{\ran \left(\mL^{1/2} N_\Theta|_{\wtH^s}^{\wtH^s}\right)} = \left(\ker \left(\mL^{-s} N_\Theta \mL^{1/2}\mL^{s}|_{\wtH^s}^{\wtH^s} \right)\right)^\perp.
    \end{align*}
    The latter kernel is directly related to $\ker N_\Theta|^{\wtH^{-s}}_{\wtH^{-s-1}}$, which was proved above to be trivial. As a result, $\mL^{1/2} N_\Theta\colon \wtH^{s}\to \wtH^{s}$ is an isomorphism, and so is $N_\Theta\colon \wtH^{s}\to \wtH^{s+1}$.
\end{proof}


%

\section*{Acknowledgements}
The authors would like to thank the anonymous referees
for their constructive comments that improved the
quality of this paper. F.M. was supported by NSF grant DMS-1814104 and NSF CAREER grant DMS-1943580. R.N. was supported by the European Research Council under ERC grant No. 647812 (UQMSI). G.P.P. was supported by the Leverhulme trust and EPSRC grant EP/R001898/1.




\end{document}